\documentclass[pdflatex,sn-mathphys-num]{sn-jnl}

\makeatletter
\AtBeginDocument{%
  \newgeometry{
    paperwidth=210mm,paperheight=297mm,
    top=26mm, headsep=5.15mm,
    left=30mm,right=30mm,  
    bottom=25mm,
    marginparsep=5mm, marginparwidth=12mm,
    bindingoffset=0mm
  }%
}
\makeatother

\usepackage{graphicx}%
\usepackage{multirow}%
\usepackage{amsmath,amssymb,amsfonts}%
\usepackage{amsthm}%
\usepackage{mathrsfs}%
\usepackage[title]{appendix}%
\usepackage{xcolor}%
\usepackage{textcomp}%
\usepackage{manyfoot}%
\usepackage{booktabs}%
\usepackage{algorithm}%
\usepackage{algorithmicx}%
\usepackage{algpseudocode}%
\usepackage{listings}%

\usepackage{adjustbox}
\usepackage{subcaption}
\usepackage{booktabs}
\usepackage{todonotes}
\usepackage[
    capitalise,
    noabbrev,
  ]{cleveref}

  \usepackage[utf8]{inputenc}
\usepackage[T1]{fontenc}
\usepackage{tikz}
\usetikzlibrary{matrix, positioning, decorations.pathreplacing}

\theoremstyle{thmstyleone}%
\newtheorem{theorem}{Theorem}
\newtheorem{proposition}[theorem]{Proposition}%
\newtheorem{lemma}[theorem]{Lemma}
\newtheorem{corollary}[theorem]{Corollary}

\theoremstyle{thmstyletwo}%
\newtheorem{example}{Example}%
\newtheorem{remark}{Remark}%

\theoremstyle{thmstylethree}%

\let\div\relax		
\DeclareMathOperator{\cl}{cl}
\DeclareMathOperator{\div}{div}
\DeclareMathOperator{\vecdiv}{\boldsymbol{\div}}

\DeclareMathOperator{\grad}{\nabla}
\DeclareMathOperator{\supp}{supp}
\DeclareMathOperator{\tangent}{\nabla^{\perp}\!}

\newcommand{\ph}{\vec{p}^{h,*}\!}
\DeclareMathOperator{\vecgrad}{\boldsymbol{\nabla}\!}
\DeclareMathOperator*{\argmin}{argmin}

\def\loc{\mathrm{loc}}

\newcommand{\dx}{\;\mathrm{d}x}
\newcommand{\N}{\mathbb{N}}

\newcommand{\R}{\mathbb{R}}

\newcommand{\B}{\mathbf{B}}

\newcommand{\mcG}{\mathcal{G}}
\newcommand{\bfK}{\mathbf{K}}
\newcommand{\mcD}{\mathcal{D}}
\newcommand{\mcF}{\mathcal{F}}
\newcommand{\mcX}{\mathcal{X}}
\newcommand{\range}{\mathcal{R}} 
\newcommand{\Dom}{\operatorname{Dom}} 

\DeclareMathOperator{\Null}{\mathcal{N}} 

\DeclareMathOperator{\PSNR}{PSNR}
\DeclareMathOperator{\MSSIM}{MSSIM}
\DeclareMathOperator{\MSE}{MSE}
\DeclareMathOperator{\Perf}{Perf}

\newcommand{\gt}{\ensuremath{d_{gt}}}	
\newcommand{\TV}{TV}
\newcommand{\nf}{\vec{\tau}^\perp} 
\newcommand{\mcP}[1][]{\ensuremath{\mathcal{P}_{#1}}} 
\newcommand{\mcPh}[1][]{\ensuremath{\mathcal{P}^h_{#1}}} 

\begin{document}

\title[Analysis and Domain Decomposition for TV-Stokes]{Functional Analysis and Parallel Domain Decomposition for the TV-Stokes Model}


\author*[1]{\fnm{Andreas} \sur{Langer}}\email{andreas.langer@math.lth.se}

\author[1]{\fnm{Marc} \sur{Runft}}\email{runftmarc@gmail.com}

\author[2]{\fnm{Talal} \sur{Rahman}}\email{talal.rahman@hvl.no}
\author[3]{\fnm{Xue-Cheng} \sur{Tai}}\email{xtai@norceresearch.no}
\author[4]{\fnm{Bin} \sur{Wu}}\email{dr.bwu@ustc.edu}

\affil[1]{\orgdiv{Centre for Mathematical Sciences}, \orgname{Lund University}, \orgaddress{\street{Box 118}, \city{Lund}, \postcode{22100}, \country{Sweden}}}

\affil[2]{\orgdiv{Faculty of Engineering}, \orgname{Western Norway University of Applied Sciences}, \orgaddress{\street{Inndalsveien 28}, \city{Bergen}, \postcode{5063}, \country{Norway}}}

\affil[3]{\orgname{Norwegian Research Centre}, \orgaddress{\street{Nyg{\aa}rdsgaten 112}, \city{Bergen}, \postcode{5008}, \country{Norway}}}

\affil[4]{\orgname{EDInsights AS}, \orgaddress{\street{Hoffsveien 13}, \city{Oslo}, \postcode{0275}, \country{Norway}}}


\abstract{The TV-Stokes model is a two-step variational method for image denoising that combines the estimation of a divergence-free tangent field with total variation regularization in the first step and then uses that to reconstruct the image in the second step. 
Although effective in practice, its mathematical structure and potential for parallelization have remained unexplored. 
In this work, we establish a rigorous functional-analytic foundation for the TV-Stokes model. We formulate both steps in appropriate infinite-dimensional function spaces, derive their dual formulations, and analyze the compatibility and mathematical consistency of the coupled system. In particular, we identify analytical inconsistencies in the original formulation and demonstrate how an alternative model resolves them. We also examine the orthogonal projection onto the divergence-free subspace, proving its existence in a continuous setting and establishing consistency with its discrete counterpart. 

Building on this theoretical framework, we develop the first domain decomposition method for TV-Stokes by applying overlapping Schwarz-type iterations to the duals of both steps. Although the divergence-free constraint gives rise to a global projection operator in the continuous model, we show that it becomes locally computable in the discrete setting. This insight enables a fully parallelizable algorithm suitable for large-scale image processing in memory-constrained environments. Numerical experiments demonstrate the correctness of the domain decomposition approach and its usability in parallel image reconstruction.}

\keywords{TV-Stokes, domain decomposition, convex optimization, dual formulation, image denoising}



\maketitle

\section{Introduction}
Total variation (TV) minimization is a variational regularization technique, first introduced in \cite{ROF}, to address ill-posed inverse problems in image processing due to its ability to preserve discontinuities in the solution.
Let $\Omega\subset \R^2$ be an open, bounded and simply connected domain with Lipschitz boundary representing the image domain. 
For a vector $\vec{v}\in L^1(\Omega, \R^c)$, $c\in\N$, we define the total variation of $\vec{v}$ in $\Omega$ by
	\begin{equation*}
	\begin{split}
		TV(\vec{v}):=\int_{\Omega}|D\vec{v}|
		:=\sup\limits_{\vec{p}=(\vec{p}_1,\ldots,\vec{p}_c)\in \mathcal{C}_0^1(\Omega,\R^{2 \times c})}\Bigg\{\int_{\Omega} &\vec v\cdot\vecdiv\vec{p}\dx\colon\\ &|\vec{p}_i|\leq 1~\text{almost every  (a.e.) in}~\Omega,~i=1,...,c\Bigg\},
		\end{split}
	\end{equation*}
where $\vecdiv : \mathcal{C}_0^1(\Omega,\R^{2 \times c}) \to \mathcal{C}_0^1(\Omega,\R^{c})$ describes the column-wise divergence and $|\cdot|$ denotes the standard Euclidean vector norm. Here $D\vec{v}$ denotes the distributional gradient of $\vec{v}$ and $BV(\Omega,\R^c)$ defines the space of all $L^1$-functions with bounded variation, i.e.\ $BV(\Omega,\R^c) :=\{\vec{v}\in L^1(\Omega,\R^c)\colon TV(\vec{v})<\infty\}$. Equipped with the norm $\|\vec{v}\|_{BV}:= \|\vec{v}\|_{L^1} + TV(\vec{v})$ the space $BV(\Omega,\R^c)$ becomes a Banach space \cite[Thm. 10.1.1]{AtBuMi:14}. For a short overview of different ways to define the total variation for vector-valued functions, we refer the reader to \cite{GoStCr:12}.

While total variation is known to preserve discontinuities, it is also well-known that reconstructions obtained by total variation minimization may suffer from the so-called staircase effect. In the context of image restoration, this effect generates blocky and non-natural structures in the solution \cite{Jalalzai2016}. To overcome this limitation and achieve more natural reconstructions, several higher order regularization strategies have been proposed, such as the total generalized variation \cite{BreKunPoc} and second-order approaches \cite{PapafitsorosSchonlieb:14}. Another strategy is the TV-Stokes model \cite{RaTaOs2007} on which we will focus in this work. This model is a two-step variational method designed to mitigate the staircase effect while preserving edges in image denoising.
 In the first step, a divergent-free tangent field $\vec{\tau}$ is computed by solving
\begin{align}\label{eq:cTFS}
		\min_{\vec\tau\in BV(\Omega,\R^2)\cap L^{2}(\Omega,\R^2)}\left\{TV(\vec{\tau})+\frac{1}{2\delta}\|\vec{\tau}-\vec{\tau}_0\|^2_{L^2}\right\}\text{~~~~~~subject to $\div(\vec{\tau}) = 0$},
	\end{align}
where $\vec{\tau}_0\in L^2(\Omega,\R^2)$ is the given tangent field of an observed image $d_0$ and $\delta>0$ weighting the importance of the two terms. 
In the second step, the reconstructed image is recovered from the obtained divergent field $\vec{\tau}$ by solving
\begin{align}\label{eq:cIRP}
		\min_{d\in BV(\Omega,\R)\cap L^{2}(\Omega,\R)}TV(d) + \left\langle d, \div\frac{\nf}{|\nf|}\right\rangle_{L^2} \text{~~~~~~subject to $\|d - d_0\|_{L^2}^2 = \sigma^2$},
	\end{align}
where $\sigma > 0$ denotes the standard deviation of the noise present in the observed image $d_0$.

In \cite{RaTaOs2007}, the minimization problems \eqref{eq:cTFS} and \eqref{eq:cIRP} are solved numerically using an explicit time-marching scheme. To improve computational efficiency, dual formulations of these problems were later proposed in \cite{dualtvstokes:2009}, resulting in an iterative algorithm based on a variant of Chambolle's projection method \cite{Chambolle:2004}. While the dual approach significantly accelerates convergence and performs well for small- to medium-scale images, its applicability to large-scale problems is limited. This is primarily due to the iterative nature of the algorithms required to solve both steps of the TV-Stokes model, which leads to high computational costs when applied to high-resolution data. It should be noted that the dual formulations in \cite{dualtvstokes:2009} were presented without specifying the underlying function spaces. To address this gap, we include a careful derivation of the dual problem with explicit function space considerations in this paper.

To overcome the scalability limitations of these existing solvers, particularly in the context of high-resolution or large-scale imaging problems, it is natural to consider parallelization strategies. Domain decomposition methods provide a principled framework for this purpose, allowing the global problem to be reformulated as a collection of coupled subproblems defined on overlapping or non-overlapping subdomains.

One of the central challenges in developing domain decomposition algorithms for TV minimization stems from the intrinsic properties of the TV functional: it is both non-differentiable and lacks additivity over disjoint domain partitions. More precisely, if the domain $\Omega$ is split into disjoint subdomains  $\Omega_1$ and $\Omega_2$, then the total variation of a function $d$ over $\Omega$ satisfies the decomposition formula; cf.\ \cite[Theorem 3.84]{ambrosio:2000}:
\begin{equation*}
\int_\Omega |D(d_{\mid_{\Omega_1}}+d_{\mid_{\Omega_2}})| =
\int_{\Omega_1}|D(d_{\mid_{\Omega_1}})|+
\int_{\Omega_2}|D(d_{\mid_{\Omega_2}})|+ \int_{\partial \Omega_1 \cap \partial \Omega_2} |d_{\mid_{\Omega_1}}^+-d_{\mid_{\Omega_2}}^-|\text{ d} \mathcal{H}^{1}(x),
\end{equation*}
where $\mathcal{H}^{1}$ denotes the 1-dimensional Hausdorff measure, and $d^+$ and $d^-$ denote the traces of $d$ on the interface from the interior and exterior, respectively. This interface term captures the magnitude of jumps across subdomains, highlighting the fact that preserving continuity or controlled discontinuities across interfaces is essential when designing effective decomposition methods.

It is important to note that for widely used methods such as those in \cite{Car, ChaMat, TaiTse, TaiXu}, the question of convergence to a global minimizer remains open when applied to non-smooth and non-additive problems, as a general convergence theory is still lacking. Nonetheless, in \cite{CheTai} and \cite{XuTaiWan}, subspace correction techniques from \cite{TaiTse, TaiXu} have been successfully applied to smoothed approximations of total variation minimization problems.

The first domain decomposition techniques tailored to the minimization of TV appeared in \cite{DuanTai2012, ForKimLanSch, ForLanSch2010, ForSch, LanOshSch}, with the convergence of the energy and monotonicity properties proved. However, these early methods do not guarantee convergence to the global minimizer in general, as explicitly shown through counterexamples in \cite{Lan2021, LeeNam}. In \cite{HinLan2013, HinLan2014}, a posteriori bounds were introduced, quantifying the distance between the numerically computed solution and the true minimizer. These estimates provide practical assurance that these iterative methods approach the correct solution under suitable conditions.

To address the limitations imposed by non-smoothness and non-additivity of the TV term, dualization techniques have been employed. In particular, \cite{HinLan2015_1} proposed a convergent non-overlapping method for the discrete TV minimization, while \cite{ChaTaiWanYan} established convergence results and even rates  for overlapping decompositions in the continuous setting. These foundational contributions have enabled the development of both overlapping \cite{LangerGaspoz:19} and non-overlapping \cite{LeeNam} domain decomposition methods for the primal TV minimization problem, with theoretical results ensuring convergence to a global minimizer.

Since then, various splitting frameworks have been explored, including additive and accelerated schemes \cite{HilbLanger2022, LeeNamPark2019, LeeParkPark2019, Lee2019fast, LeePark2019, LiZhangChangDuan2021, park2020additive, park2020overlapping, park2021accelerated}. For a comprehensive introduction to domain decomposition methods in the context of TV minimization, the reader is referred to \cite{Lan2021, LeePark:20}.

\subsection*{Contribution and Outline}

The purpose of this paper is twofold. First, we establish a rigorous functional-analytic framework for the TV-Stokes model by formulating both variational subproblems and their duals in appropriate infinite-dimensional function spaces. While the original works \cite{RaTaOs2007, dualtvstokes:2009} considered the model in an infinite-dimensional setting, they did not specify the functional framework in detail, nor did they analyze whether the two variational steps are mathematically compatible or under what assumptions the model is well-defined. By revisiting the TV-Stokes model from a functional-analytic perspective, we clarify the mathematical consistency of the subproblems, the interpretation of the divergence-free constraint, and the mathematical role of projection operators in the dual formulation.

Second, we introduce a domain decomposition method for the TV-Stokes model, tailored for large-scale image processing applications. Although the first subproblem involves a global operator that prevents direct localization in the continuous setting, we demonstrate that a fully localized decomposition becomes possible at the discrete level. This enables the formulation of a domain decomposition method in which each subdomain problem can be solved independently on its respective subdomain. While we do not pursue a continuous domain decomposition formulation here, the analytic framework developed in the first part guides the construction of a consistent and parallelizable discrete algorithm. In particular, we propose an overlapping domain decomposition method for the dual formulation, constituting the first such extension of domain decomposition techniques to the TV-Stokes model.

Our specific contributions are as follows:

\begin{enumerate}
\item We identify two analytical inconsistencies that arise when coupling the two variational subproblems in the original TV-Stokes formulation. To address this, we revisit an alternative formulation previously proposed in \cite{LiRaTa2011}, and demonstrate that it provides a functionally compatible and mathematically consistent model. Our contribution lies in clarifying the need for this alternative from a functional-analytic perspective and in comparing both formulations through analytical arguments and numerical experiments.

\item We rigorously establish the equivalence between the constrained formulation \eqref{eq:cIRP} and its unconstrained counterpart. 
While the structure of the argument follows the general framework in \cite{ChambolleLions1997}, the presence of the term $\left\langle d, \div\frac{\nf}{|\nf|}\right\rangle_{L^2}$ introduces new analytical difficulties. 
In particular, the boundedness from below of the functional in \eqref{eq:cIRP} requires density arguments from \cite{HintermullerRautenberg:15}.

\item We derive the dual formulations of both \eqref{eq:cTFS} and the unconstrained counterpart of \eqref{eq:cIRP}, with a careful treatment of the functional analytic setting. While duality for problems of this type is standard in principle, the divergence-free constraint in \eqref{eq:cTFS} necessitates special attention. In contrast to \cite{dualtvstokes:2009}, where the dual formulations were presented without reference to the function spaces involved, we give a complete derivation in the appropriate variational setting.

\item We analyze the orthogonal projection \( \mcP[K] \) onto the divergence-free subspace 
\[ K:=\{\vec{\tau}\in L^2(\Omega,\R^2) \colon \div \vec\tau = 0\} \subset L^2(\Omega, \R^2). \]
 While an explicit discrete version of this projection was used in \cite{dualtvstokes:2009}, its existence and structure in the infinite-dimensional setting had not been addressed. We show that existence follows from classical results in inverse problems. Moreover, we demonstrate that for \( \mathcal{C}^1 \)-smooth vector fields, the continuous projection agrees with the discrete analogue.

\item  Based on the dual formulation of each step, we construct an overlapping domain decomposition method for the TV-Stokes model, following the framework introduced in  \cite{ChaTaiWanYan}. A key technical challenge is the global nature of the orthogonal projection $\mcP[K]$ onto the divergence-free subspace, which prevents a direct localization in the continuous setting. In the discrete setting, however, we show that this global projection can in fact be computed locally on each subdomain. This key observation enables the formulation of localized subproblems and forms the foundation of our parallel decomposition algorithm for TV-Stokes.
\end{enumerate}

The rest of the paper is organized as follows: In \cref{sec:Fundamentals} we recall frequently used notations and fundamental results relevant for the rest of the paper. The analytic discussion of the TV-Stokes model is presented in  \cref{Sec:DiscussionTVStokes}. In particular, we identify two analytical inconsistencies of the TV-Stokes model and present ways to deal with them in an analytical sound manner. Further we present the dual formulations of the optimization problems of both steps of the TV-Stokes model and analyze the orthogonal projection onto the divergence-free space $K$ in an infinite dimensional setting. In \cref{sec:NumImpl} we introduce a finite difference discretisation of the TV-Stokes model. An analytical and numerical comparison of the two different image reconstruction steps is presented in \cref{sec:ComparisonIR}. \Cref{sec:DD} is devoted to the domain decomposition approach of the TV-Stokes model. More precisely, a discrete overlapping domain decomposition method with local subspace iterations is presented together with numerical experiments, showing its usability. Finally, in \cref{sec:conclusions} we present the conclusion.

\section{Fundamentals}\label{sec:Fundamentals}
For a Banach space $X$ we denote its norm by $\|\cdot\|_X$ and its dual space by $X'$. If we apply a functional $x'\in X'$ on an element $x\in X$, we write $\langle x',x\rangle_{X',X}$. If $X$ is additionally a Hilbert space, we denote its inner product with $\langle\cdot,\cdot\rangle_X$. 
For $X=H^1_0(\Omega,\R)$ we use the shorthand notations $\|\cdot\|_{H^1_0}$ and $\langle\cdot,\cdot\rangle_{H^{-1},H_0^1}$, where $H^{-1}(\Omega,\R)$ denotes the dual space of $H^1_0(\Omega,\R)$. Similarly we write $\|\cdot\|_{H^{\div}}$, $\|\cdot\|_{L^2}$, $\|\cdot\|_{L^1}$, and $\|\cdot\|_{H^{-1}}$.

A function $f: X \to \overline{\R}:=\R \cup\{\pm\infty\}$ is called \emph{proper} if $f(u) < \infty$ for one $u \in X$ and $f(u) > -\infty$ for all $u \in X$. Further $f$ is called $X$-\emph{coercive}, if for any sequence $(v_n)_{n\in\N} \subset X$ we have
  \begin{align*}
    \|v_n\|_X \to \infty
    \implies
    F(v_n) \to \infty.
  \end{align*}
For a convex functional $f : X \to \overline{\R}$, we define the \emph{subdifferential} of $f$ at $v\in X$, as the set valued function $\partial f(v) = \emptyset$ if $f(v)=\infty$, and otherwise as
\[
\partial f(v) = \{v^* \in X' \ : \ \langle v^*, u-v\rangle_{X',X} + f(v) \leq f(u) \ \ \forall u\in X \}.
\]
For a proper convex function $f:X \to \overline{\R}$, the \emph{lower semicontinuous hull} (or \emph{closure}) of $f$, denoted by $\cl f$, is defined as the greatest lower semicontinuous function (not necessarily finite) that is majorized by $f$, i.e., satisfies $\cl f\leq f$.

For an operator $A:X\rightarrow Y$ between two Banach spaces $X$ and $Y$ we denote by $A^*: Y' \to X'$ its adjoint operator. Further the domain, the range and the null space of $A$ is denoted by $\Dom(A)$, $\range(A)$ and $\Null(A)$ respectively. 
If $X$ and $Y$ are Hilbert spaces and $A:X\rightarrow Y$ is a linear bounded operator, then the map $A^{\dagger} : \Dom(A^{\dagger})\subset Y \rightarrow X$, which maps every $g\in \Dom(A^{\dagger})$ to the unique element $f\in X$ with smallest possible norm fulfilling the equation $A^*A f=A^* g$, is called \textit{Moore-Penrose-Inverse} of $A$. The map $A^{\dagger}$ is well-defined for $\Dom(A^{\dagger})=\range(A)\oplus\range(A)^{\perp}.$ Here and in the following we denote the orthogonal complement of a space $M\subset Y$ by $M^\perp$ and by $\mcP[M]: Y \to M$ the orthogonal projection onto the closed subspace $M$. The element $f^{\dagger}=A^{\dagger} g$ is called \textit{minimum-norm-solution} of $A f=g$ \cite[Lemma 2.1.4 and Definition 2.1.5]{rieder:2003}. Further we have the following equivalency.

\begin{lemma}[{\cite[Satz 2.1.1]{rieder:2003}}]\label{thmproj}
Let $X$ and $Y$ be real Hilbert spaces and $A:X\rightarrow Y$ be a linear bounded operator. Then the following statements for $f\in X$ and $g\in Y$ are equivalent:
			\begin{enumerate}[(a)]
				\item $A f=\mcP[\overline{\range(A)}]g$,
				\item $\|A f-g\|_Y\leq \|A\phi-g\|_Y$ ~~for all $\phi\in X$,
				\item\label{thmproj:c} $A^{*}A f=A^{*} g$ ~~in $X'$.
			\end{enumerate}
\end{lemma}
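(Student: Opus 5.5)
The plan is to prove the cycle (a)$\Rightarrow$(b)$\Rightarrow$(c)$\Rightarrow$(a), using only the Hilbert projection theorem and the identity $\Null(A^*) = \range(A)^\perp$. Throughout, write $M:=\overline{\range(A)}$, a closed subspace of $Y$, and identify $X'$ with $X$ via Riesz, so that (c) is read as an identity in $X$.

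\textbf{(a)$\Rightarrow$(b).} Assume $Af=\mcP[M]g$. For any $\phi\in X$ we have $A\phi - Af\in M$ and $Af-g = \mcP[M]g - g\in M^\perp$. Pythagoras then gives
\[
\|A\phi-g\|_Y^2=\|A\phi-Af\|_Y^2+\|Af-g\|_Y^2\ge \|Af-g\|_Y^2 .
\]

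\textbf{(b)$\Rightarrow$(c).} Fix $h\in X$ and consider $t\mapsto \|A(f+th)-g\|_Y^2$. By (b) this is minimal at $t=0$, and it is a quadratic polynomial in $t$. Its derivative at $0$ is $2\langle Af-g,Ah\rangle_Y = 2\langle A^*(Af-g),h\rangle_X$, and this must vanish. Since $h$ is arbitrary, $A^*Af=A^*g$.

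\textbf{(c)$\Rightarrow$(a).} From $A^*(Af-g)=0$ we get $Af-g\in\Null(A^*)$. Next we need $\Null(A^*)=\range(A)^\perp=M^\perp$. This follows because $\langle y,A\phi\rangle_Y=\langle A^*y,\phi\rangle_X$ for all $\phi$, so $A^*y=0$ holds exactly when $y\perp\range(A)$; and the orthogonal complement of a set equals that of its closure. Hence we have the decomposition $g = Af + (g-Af)$ with $Af\in\range(A)\subset M$ and $g-Af\in M^\perp$. By uniqueness of the orthogonal decomposition $Y=M\oplus M^\perp$, this yields $Af=\mcP[M]g$.

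There is no real obstacle. The only points needing care are the identification $\Null(A^*)=\overline{\range(A)}^\perp$ and the use of the closure $M$: $\range(A)$ itself need not be closed, so the projection must be taken onto $M$. Note also that existence of $f$ is not claimed; the statement only asserts that the three conditions are equivalent for a given pair $f$, $g$.
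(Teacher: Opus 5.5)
Your proof is correct. It is the standard argument: Pythagoras for (a)$\Rightarrow$(b), first-order optimality for (b)$\Rightarrow$(c), and $\Null(A^*)=\range(A)^\perp=\overline{\range(A)}^\perp$ for (c)$\Rightarrow$(a). The paper gives no proof of its own and simply cites Rieder's textbook, where essentially this argument is the one used.

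Your Riesz identification of $X'$ with $X$ is harmless but unnecessary. In the paper's application $X'=H^{-1}(\Omega,\R)$ is not identified with $X=H_0^1(\Omega,\R)$, yet your argument only uses $\langle A^*y,h\rangle_{X',X}=\langle y,Ah\rangle_Y$, so it goes through verbatim.
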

Further two operators $A$ and $B$ with the same domain $\Dom(A)=\Dom(B)$ are said to be equal, written $A=B$, if $Af=Bf$ for all $f\in \Dom(A)=\Dom(B)$; cf.\ \cite[p.~99]{Kreyszig:1991}. 

Let $c\in\N$. We will frequently use the Hilbert space
\begin{equation*}
\begin{split}
H_0^{\div}(\Omega,\R^{2\times c}):=\{\vec{p}=(\vec{p}_1,\ldots, \vec{p}_c)\in L^2(\Omega,\R^{2\times c})\colon &\div\vec{p}_i\in L^2(\Omega,\R), \\
& \vec{p}_i \cdot \vec{e}\mid_{\partial \Omega} = 0 \ \text{for}\ i=1,\ldots,c\},
\end{split}
\end{equation*}
where $\vec{e}$ is the outward normal on $\partial \Omega$. Given a set $X\subset\{\vec p=(\vec p_1,...,\vec p_c) : \Omega\to\R^{2\times c}\}$ of vector-valued functions we define 
\begin{equation}\label{eq:DefB}
\B(X):=\{\vec{p}\in X \ : \ |\vec{p}_i|\leq 1 \text{ a.e.\ in } \Omega\text{~for~}i=1,...,c\}.
\end{equation}

\section{Discussion on TV-Stokes Model}\label{Sec:DiscussionTVStokes}
In this section, we revisit the TV-Stokes model and examine its two-step variational structure from a functional-analytic perspective. We start by analyzing the first step, which is concerned with computing a divergence-free tangent field using total variation minimization. The second step, which recovers the reconstructed image from the divergence-free field obtained in step one, is then treated separately in \cref{Sec:Step2}.
\subsection{Step 1 - Tangent Field Smoothing (TFS)}

\subsubsection{Analytic Discussion}
In the first step of the TV-Stokes model, it is required that the tangent field of the observed (noisy) image lies in $L^2(\Omega,\R^2)$, that is,
\begin{align*}
		\vec{\tau}_0 = \tangent d_0 = (-d_{0y}, d_{0x})^T\in L^2(\Omega,\R^2).
	\end{align*} 
To satisfy this requirement, we assume that $d_0 \in H^1(\Omega,\R)$, which ensures the existence of weak partial derivatives $d_{0x}, d_{0y} \in L^2(\Omega,\R)$, thereby guaranteeing that $\vec{\tau}_0\in L^2(\Omega,\R^2)$.
 Note that if $d_0\in L^2(\Omega,\R)$, then its tangent field might need to be understood in a distributional sense and $\vec{\tau}_0 \in H^{\operatorname{curl}}(\Omega,\R^2)'$, where  $H^{\operatorname{curl}}(\Omega,\R^2) = \{ \vec{n} \in L^2(\Omega,\R^2) \ : \ \operatorname{curl}\vec{n} \in L^2(\Omega,\R)\}$. For a modification of the first step that accommodates such more general situations, see \cite{LiRaTa2011}. Note that such a modification adds additional difficulties in deriving a solution process and hence this might be the reason why no algorithm for this modification has been presented in \cite{LiRaTa2011}. We assume $d_0 \in H^1(\Omega,\R)$, which may be interpreted as a smoothed version of a noisy image. Note that in a discrete setting, which is the relevant case in implementation and practical applications, this does not play any role and no smoothing is needed.

In Step~1 of the TV-Stokes model one is looking for a smoothed tangent field $\vec{\tau} \in BV(\Omega,\R^2) \cap L^2(\Omega,\R^2)$ by solving \eqref{eq:cTFS}. Thereby the constraint $\div(\vec{\tau})=0$ in \eqref{eq:cTFS} is understood in a distributional sense, i.e.,
	 \begin{align*}
			\langle\div\vec\tau,\phi\rangle_{H^{-1},H_0^1} ~:=~\langle\vec{\tau},-\grad\phi\rangle_{L^2}
		\end{align*}
		for $\phi\in H_0^1(\Omega,\R)$.
		Thus, $\div$ is a linear continuous operator from $L^2(\Omega,\R^2)$ into $H^{-1}(\Omega,\R)$ defined with the help of its adjoint operator $-\grad : H_0^1(\Omega,\R)\rightarrow L^2(\Omega,\R^2)$.
	
If $\vec{\tau}_0 \in L^2(\Omega,\R^2)$, then a unique solution of \eqref{eq:cTFS} is guaranteed in $BV(\Omega,\R^2)$ \cite[Theorem 3.2]{LiRaTa2011}. 
We note that if $\vec{\tau}$ is sufficiently smooth such that $\div(\vec{\tau})$ is defined in the classical way, then \eqref{eq:cTFS} has a solution in $H^{\div}(\Omega,\R^2)$, as the constraint $\div(\vec{\tau})=0$ ensures that $\div(\vec{\tau})\in L^2(\Omega,\R)$.

\subsubsection{Projection on Subspace with Zero Divergence}\label{sec:proj}
		To handle the constraint $\div\vec{\tau} = 0$ in the optimization problem \eqref{eq:cTFS} and to derive a respective dual formulation of \eqref{eq:cTFS}, the orthogonal projection $\mathcal{P}_{K}$ on the subspace $K:=\left\{\vec{\tau} \in L^2(\Omega,\R^2)\colon \div\vec{\tau}=0 \right\}$ is required, where $\div: L^2(\Omega,\R^2) \to H^{-1}(\Omega,\R)$. We have that
		\begin{equation}\label{eq:div_bounded}
		\begin{split}
					\|\div\| 
					&=
					\sup_{\vec{\tau}\in L^2(\Omega,\R^2)\setminus\{0\}}\frac{\|\div\vec{\tau}\|_{H^{-1}}}{\|\vec{\tau}\|_{L^2}}
					= \sup_{\vec{\tau}\in L^2(\Omega,\R^2)\setminus\{0\}}\sup_{\phi \in H_0^1(\Omega,\R)\setminus\{0\}}
					\frac{\big|\langle\div\vec{\tau},\phi\rangle_{H^{-1},H_0^1}\big|}{\|\vec{\tau}\|_{L^2}\|\phi\|_{H_0^1}}\\
					&= \sup_{\vec{\tau}\in L^2(\Omega,\R^2)\setminus\{0\}}\sup_{\phi \in H_0^1(\Omega,\R)\setminus\{0\}}\frac{\big|\langle\vec{\tau},-\grad\phi\rangle_{L^2}\big|}{\|\vec{\tau}\|_{L^2}\|\phi\|_{H_0^1}}\\
					&\leq \sup_{\vec{\tau}\in L^2(\Omega,\R^2)\setminus\{0\}}\sup_{\phi \in H_0^1(\Omega,\R)\setminus\{0\}} 
					\frac{\|\vec{\tau}\|_{L^2}\|\grad\phi\|_{L^2}}{\|\vec{\tau}\|_{L^2}\|\phi\|_{H_0^1}}
					\leq 1
					\end{split}
				\end{equation}
and hence $\div$ is linear and bounded. Since $K$ is the null space or kernel of $\div$, by \cite[2.7-10 Corollary, p.98]{Kreyszig:1991} it follows that $K$ is closed. Moreover any constant function lies in $K$, rendering $K$ non-empty. Hence \cite[3.3-1 Thm., p.144]{Kreyszig:1991} implies the existence of an orthogonal projection $\mcP[K]: L^2(\Omega,\R^2) \to K$.

\paragraph{Construction of the orthogonal projection}

Recall that $\grad: H_0^1(\Omega,\R) \to L^2(\Omega,\R^2)$ is adjoint to $-\div$. Let us assume that $\vec{w}\in H^{\div}(\Omega,\R^2)$. Then, by an application of the Lax-Milgram theorem, see e.g.\ \cite[Corollary 5.8, p.140]{Brezis:11}, the Laplace equation 
\begin{equation}\label{eq:NormalEq}
-\div(\grad) u= - \Delta u = \div \vec{w}
\end{equation}
 has a unique weak solution $u\in H^1(\Omega,\R)$. If we additionally assume homogeneous Dirichlet boundary conditions, then the problem has a weak solution $u\in H^1_0(\Omega,\R)$. Hence $\div \vec{w} \in \range{(-\Delta)}$ and there is a Moore-Penrose inverse $-\Delta^\dagger : \range(-\Delta) \oplus \range(-\Delta)^\perp \subseteq H^{-1}(\Omega,\R) \to H_0^1(\Omega,\R)$ such that 
 \begin{align}\label{eq:moorepenroseDivtau}
				u := (-\Delta)^{\dagger}\div\vec{w} = -\Delta^{\dagger}\div\vec{w}.
\end{align}
Now we apply \cref{thmproj} with $X=H_0^1(\Omega,\R)$, $Y=L^2(\Omega,\R^2)$, $A=-\grad$ and $A^{*}=\div$. Then \eqref{eq:NormalEq} is condition \ref{thmproj:c} in \cref{thmproj} and hence equivalent to
			\begin{align*}
				-\grad u=\mathcal{P}_{\overline{\range(-\grad)}}\vec{w}.
			\end{align*}
Plugging \eqref{eq:moorepenroseDivtau} into the latter equation yields
\begin{align*}
\mathcal{P}_{\overline{\range(-\grad)}}\vec{w}=\grad\Delta^{\dagger}\div\vec{w}.
\end{align*}
Note that while $\mcP[\overline{\range(-\grad)}]: L^2(\Omega,\R^2) \to \overline{\range(-\grad)}$ is indeed an orthogonal projection, $\grad\Delta^{\dagger}\div: H^{\div}(\Omega,\R^2) \to \range(-\grad)$ is not even a projection.
			Since for any linear bounded operator $A$ between Hilbert spaces we have $\Null(A^*)^{\perp}=\overline{\range(A)}$, see e.g.\ \cite[Prop. 4.9]{Morrison:2011}, and $\div : L^2(\Omega,\R^2)\rightarrow H^{-1}(\Omega,\R)$ is linear and bounded, we get for all $\vec{w}\in H^{\div}(\Omega,\R^2)$ the representation
			\begin{align*}
				\mathcal{P}_{\Null(\div)}\vec{w} =
				(I-\mathcal{P}_{\overline{\range(-\grad)}})\vec{w}
				= \vec{w} - \grad\Delta^{\dagger}\div\vec{w} \in L^2(\Omega,\R^2).
			\end{align*}
			As $\grad: H^1_0(\Omega,\R) \to L^2(\Omega,\R^2)$, we get $\mcP[\Null(\div)] \vec{w}\in \Null(\div)\subset L^2(\Omega,\R^2)$, but $\Null(\div)\not\subseteq H^{\div}(\Omega,\R^2)$. Note that $\grad\Delta^{\dagger}\div\vec{w}\in L^2(\Omega,\R^2)$ only and its divergence may be just understood in a weak sense. 
			
If $\vec{w}\in C^1(\Omega,\R^2)$ such that $\div\vec{w}\in C(\Omega,\R)$, that is $\div_{\mid_{C^1(\Omega,\R^2)}}$, i.e., $\div$ restricted to $C^1(\Omega,\R^2)$, maps into $C(\Omega,\R)$. Then it is well-known that \eqref{eq:NormalEq} has a solution $u\in C^2(\Omega,\R)$ and $\Delta^\dagger_{\mid_{C(\Omega,\R)}}$ maps onto $C^2(\Omega,\R)$, while $\grad_{\mid_{C^2(\Omega,\R)}}$ maps into $C^1(\Omega,\R^2)$. In this setting one can easily verify that $(I-\grad\Delta^{\dagger}\div)_{\mid_{C^1(\Omega,\R^2)}}$ is indeed an orthogonal projection.

\subsubsection{Dual Formulation}
Based on dualization techniques, see \cref{Sec:Dualization}, a dual formulation of \eqref{eq:cTFS} can be derived. 
\begin{corollary}[Dualization of Tangent Field Smoothing]\label{Cor:tfsdual}
Let $\vec\tau_0\in K$ and $\delta>0$. Then the solution $\vec{\tau}$ of \eqref{eq:cTFS} fulfills the equation
			\begin{align*}
			\vec\tau = \vec{\tau}_0-\delta\mcP[K] \vecdiv \vec{p},
			\end{align*}
where $\vec{p}$ is a solution of the optimization problem
\begin{align}\label{eq:tfs_dual}
\min\limits_{\vec p\in\B(H_0^{\div}(\Omega,\R^{2\times 2}))}\left\| \mcP[K] \vecdiv \vec{p}-\delta^{-1} \tau_0\right\|_{L^2}^2.
\end{align}
\end{corollary}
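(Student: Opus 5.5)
The plan is to apply Fenchel--Rockafellar duality to \eqref{eq:cTFS}, rewritten as an unconstrained problem on the Hilbert space $K$. Since $K$ is closed in $L^2(\Omega,\R^2)$, we set
\[
F(\vec\tau)=\frac{1}{2\delta}\|\vec\tau-\vec\tau_0\|_{L^2}^2+\iota_K(\vec\tau),
\]
where $\iota_K$ is the indicator of $K$. We write the total variation as a support function,
\[
TV(\vec\tau)=\sup_{\vec p\in\B(H_0^{\div}(\Omega,\R^{2\times 2}))}\langle\vec\tau,\vecdiv\vec p\rangle_{L^2}.
\]
This requires replacing $\mathcal C^1_0$ test fields by $H_0^{\div}$ fields in the definition of $TV$, which I justify by density of $\mathcal C^1_0$ in $\B(H_0^{\div})$ for the relevant topology. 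With this representation, \eqref{eq:cTFS} becomes the saddle point problem
\[
\min_{\vec\tau\in K}\ \sup_{\vec p\in\B(H_0^{\div})}\ \langle\vec\tau,\vecdiv\vec p\rangle_{L^2}+\frac{1}{2\delta}\|\vec\tau-\vec\tau_0\|_{L^2}^2 .
\]
The Lagrangian is convex and coercive in $\vec\tau$ and concave and linear in $\vec p$.

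The next step is to justify exchanging $\min$ and $\sup$. The set $\B(H_0^{\div})$ is convex, and for fixed $\vec\tau$ the Lagrangian is continuous in $\vec p$. In $\vec\tau$ the Lagrangian is strongly convex and lower semicontinuous on $K$. Since $\B(H_0^{\div})$ is not weakly compact in $H^{\div}$, I will not use a compactness-based minimax theorem on the $\vec p$ side. Instead I use the standard Fenchel duality theorem (Ekeland--Temam), with $F$ as above, $G=\iota_{\B}^*$ on the $\vecdiv$ side, and the bounded operator $\Lambda=\vecdiv^{*}$ restricted to $K$. The qualification condition holds because $F$ is finite and continuous at $\vec\tau_0\in K$ within $K$, and $TV$ is lower semicontinuous. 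This gives no duality gap and existence of a dual solution, possibly after passing to the closure. Existence of a dual minimizer can also be obtained directly: the dual objective depends on $\vec p$ only through $\mcP[K]\vecdiv\vec p$, so it suffices to show that the set $\{\mcP[K]\vecdiv\vec p\colon \vec p\in\B\}$ is closed and convex in $L^2$. I expect this step to be the main obstacle. For fixed $\vec p$, the inner minimization over $\vec\tau\in K$ is explicit. Because $\vec\tau\in K$, we have $\langle\vec\tau,\vecdiv\vec p\rangle=\langle\vec\tau,\mcP[K]\vecdiv\vec p\rangle$, so completing the square yields the minimizer
\[
\vec\tau=\mcP[K](\vec\tau_0-\delta\vecdiv\vec p)=\vec\tau_0-\delta\mcP[K]\vecdiv\vec p ,
\]
where the last equality uses $\vec\tau_0\in K$.

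Substituting this $\vec\tau$ back, the value of the inner problem is
\[
\frac{1}{2\delta}\|\vec\tau_0\|^2-\frac{\delta}{2}\|\mcP[K]\vecdiv\vec p-\delta^{-1}\vec\tau_0\|^2 .
\]
Maximizing over $\vec p$ is therefore equivalent to \eqref{eq:tfs_dual}. Finally, given a saddle point $(\vec\tau,\vec p)$, the optimality condition in $\vec\tau$ gives exactly $\vec\tau=\vec\tau_0-\delta\mcP[K]\vecdiv\vec p$. Uniqueness of $\vec\tau$ follows from strong convexity, consistent with \cite[Theorem 3.2]{LiRaTa2011}.
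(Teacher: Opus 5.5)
Your inner minimization over $K$ (using $\langle\vec\tau,\vecdiv\vec p\rangle_{L^2}=\langle\vec\tau,\mcP[K]\vecdiv\vec p\rangle_{L^2}$ for $\vec\tau\in K$) is correct, and so is the dual objective it produces. The saddle-point route is essentially a recasting of the paper's proof. The paper inverts $0\in\partial R_{\overline{C^K_0}}(\vec\tau)+\delta^{-1}(\vec\tau-\vec\tau_0)$ via $R^*_{\overline{C^K_0}}=\chi_{\overline{C^K_0}}$, then passes to $\B(H_0^{\div})$ by density.

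\textbf{Main gap: the last step.} Your conclusion rests on ``given a saddle point $(\vec\tau,\vec p)$''. You hope to get that existence from closedness in $L^2$ of $C:=\{\mcP[K]\vecdiv\vec p:\vec p\in\B(H_0^{\div}(\Omega,\R^{2\times 2}))\}$ (note $\overline{C}=\overline{C^K_0}$ by density). You cannot expect to prove that closedness.
\begin{itemize}
\item The compactness argument that works for ROF breaks down here. $\mcP[K]$ annihilates $K^\perp=\grad H_0^1$, so convergence of $\mcP[K]\vecdiv\vec p^n$ gives no $L^2$ bound on $\vecdiv\vec p^n$. A weak-$*$ limit satisfies $|\vec p_i|\le 1$ but may have $\vecdiv\vec p\notin L^2$.
\item The paper does not claim existence either. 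Its proof of the general duality theorem starts from ``Let $\vec p^*$ be a solution''. It then remarks that, because $\mcP[K]$ annihilates $K^\perp$, the unique solvability available over $\overline{C^K_0}$ cannot be ensured over $\B(H_0^{\div})$.
\end{itemize}
So drop the existence step and argue for an arbitrary minimizer $\vec p$:
\begin{itemize}
\item The dual in the variable $\vec s\in K$ has a unique solution $\vec s^*$. It is the projection of $\delta^{-1}\vec\tau_0$ onto the closed convex set $\overline{C}$.
\item The extremality relation gives $\vec s^*=\delta^{-1}(\vec\tau_0-\vec\tau)$.
\item Since $\inf_{\B(H_0^{\div})}$ equals $\min_{\overline{C}}$, any minimizer $\vec p$ satisfies $\mcP[K]\vecdiv\vec p=\vec s^*$ by uniqueness of the projection. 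This is the claimed formula.
\end{itemize}
Equivalently, absence of a duality gap plus primal attainment makes $(\vec\tau,\vec p)$ a saddle point for every dual minimizer $\vec p$. You should prove this rather than assume a saddle point.

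\textbf{Smaller issue: the qualification condition.} Consider the Ekeland--Temam theorem for $\inf_u F(u)+G(\Lambda u)$ with dual variable $\vec p$. It needs $G$ continuous at some $\Lambda u_0$, and that fails here.
\begin{itemize}
\item We have $G\circ\vecdiv^*=TV$, so $G(\vecdiv^*(u_0+b))=+\infty$ for every $b\in L^2\setminus BV$.
\item At the same time $\|\vecdiv^* b\|_{(H_0^{\div})'}\le\|b\|_{L^2}$ can be made arbitrarily small.
\item Hence $G$ is continuous nowhere, which is exactly why attainment in $\vec p$ is lost.
\end{itemize}
What continuity of the quadratic term on $K$ actually yields is the dual in $\vec s\in K$ over $\overline{C}$. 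This goes via the sum rule \cite[Prop.~5.6, p.~26]{ekeland:1999}, as in the paper, and it is the ``closure'' version you hedge towards. Also, the point where you check continuity must lie in $\Dom(TV)$. Since $\vec\tau_0\in K$ need not be in $BV$, use $u_0=0$ instead.
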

\begin{proof}
Applying \cref{Thm:Dual} with $c=2$, $\bfK:= K$, $\beta=\delta^{-1}$ and $l= -\beta \vec{\tau}_0$ yields the assertion. 
\end{proof}

Note that the duality between \eqref{eq:cTFS} and \eqref{eq:tfs_dual} only holds if $\vec{\tau}_0\in K$. If $\vec{\tau}_0 \in L^2(\Omega,\R^2)\setminus K$, then we simply exchange $\vec{\tau}_0$ in \eqref{eq:cTFS} and \cref{Cor:tfsdual} by $\mcP[K]\vec{\tau}_0$, which is obviously guaranteed to be in $K$, and the duality holds again, i.e., \eqref{eq:tfs_dual} becomes
\begin{align}\label{eq:tfs_dualPK}
\min\limits_{\vec p\in\B(H_0^{\div}(\Omega,\R^{2\times 2}))}\left\| \mcP[K] \vecdiv \vec{p}-\delta^{-1} \mcP[K]\tau_0\right\|_{L^2}^2
\end{align}
and 
\begin{align*}
			\vec\tau = \mcP[K]\vec{\tau}_0-\delta\mcP[K] \vecdiv \vec{p},
\end{align*}
where $\vec{p}$ is a solution of \eqref{eq:tfs_dualPK}, solves
\begin{align*}
		\min_{\vec\tau\in BV(\Omega,\R^2)\cap L^{2}(\Omega,\R^2)}\left\{TV(\vec{\tau})+\frac{1}{2\delta}\|\vec{\tau}-\mcP[K]\vec{\tau}_0\|^2_{L^2}\right\}\text{~~~~~~subject to $\div(\vec{\tau}) = 0$}.
	\end{align*}

To guarantee that $\vec{\tau}_0\in K$ holds without requiring an additional projection, one would have to impose the strong regularity assumption $d_0\in C^2(\Omega,\R)$. Under this condition, Clairaut's theorem justifies the interchange of mixed partial derivatives, implying that $\div\vec\tau_0=0$ and hence $\vec{\tau}_0\in K$. However, even slightly weakening this assumption, e.g., requiring only that $d_0$ is twice differentiable, may already fall outside the scope of Clairaut's theorem, and thus $\vec{\tau}_0\in K$ can no longer be guaranteed. This poses a conceptual issue, as it is generally unrealistic to model a noisy image with a function as smooth as $C^2$. Consequently, instead of \eqref{eq:tfs_dual} in the sequel we will consider \eqref{eq:tfs_dualPK}.

\subsection{Step 2 - Image Reconstruction} \label{Sec:Step2}
Given the tangent field $\vec{\tau}\in BV(\Omega,\R^2) \cap K$ obtained from Step 1 of the TV-Stokes model, the image is reconstructed by solving \eqref{eq:cIRP}.
There are two issues with formulation \eqref{eq:cIRP} on which we will comment next: 
First, unfortunately \eqref{eq:cIRP} is not well-defined, as there might exist $x\in\Omega$ such that $\nf(x)=0$. One may try to fix this by smoothing, i.e., for example, introducing
\begin{equation}\label{eq:defXi2}
	\vec{\xi}:= \frac{\nf}{|\nf|_\epsilon} := \frac{\nf}{\sqrt{\tau_1^2 + \tau_2^2 + \epsilon}}
	\end{equation}
with $\epsilon>0$ and rewrite \eqref{eq:cIRP} as
	\begin{align}\label{eq:cIRPXi}
		\min_{d\in BV(\Omega,\R)\cap L^2(\Omega,\R)}TV(d) + \left\langle d, \div\vec{\xi}\right\rangle_{L^2} \text{~~~~~~subject to $\|d - d_0\|_{L^2}^2 = \sigma^2$}.
	\end{align}
Note that for \eqref{eq:defXi2} we have $|\vec{\xi}(x)|\leq 1$ for almost all $x\in\Omega$ and hence $\vec{\xi}\in L^{\infty}(\Omega,\R^2)\subset L^2(\Omega,\R^2)$.  
	
Second, the normal field $\nf$ as well as $\vec{\xi}$ might not be smooth enough and thus $\div(\nf)$ and $\div\vec{\xi}$ might be only defined in a distributional sense. 
This would put more regularity on the solution $d$ in \eqref{eq:cIRP} and \eqref{eq:cIRPXi}, and would lead to a solution in $W^{1,1}(\Omega,\R)$, a subspace of $BV(\Omega,\R)$, cf.\ \cref{Rem:IRPW11} below. 
In this situation for the second term in \eqref{eq:cIRPXi} we use the identity $\left\langle d, \div\vec{\xi}\right\rangle_{L^2} := \left\langle -\grad d, \vec{\xi}\right\rangle_{L^2}$. 
If $\vec{\xi}$ is smooth enough, i.e., $\div\vec{\xi}$ exists in the classical sense, then we need $\div\vec{\xi} \in L^2(\Omega,\R)$ and hence $\vec{\xi} \in H^{\div}(\Omega,\R^2)$. For technical reasons in that situation we will even assume that $\vec{\xi} \in H_0^{\div}(\Omega,\R^2)$. The zero boundary condition seems even natural as the natural boundary condition on $d$ are homogeneous Neumann boundary conditions, leading to $\nf\cdot \vec{e} = 0$, where $\vec{e}$ is the outward normal.

We summarize the following assumption that ensure the TV-Stokes model is mathematically well-defined:
\begin{enumerate}
\item $d_0\in H^1(\Omega,\R)$ such that its tangent field $\vec{\tau}_0 \in L^2(\Omega,\R^2)$,
\item $\frac{\nf}{|\nf|}$ is approximated by $\vec{\xi}\in S$, where $S\in\{\B(H_0^{\div}(\Omega,\R^2)), \B(L^\infty(\Omega,\R^2))\}$.
\end{enumerate}

\subsubsection{Constrained versus Unconstrained}
Next we show that \eqref{eq:cIRPXi} can be equivalently written as an unconstrained optimization problem following the same strategy used in \cite{ChambolleLions1997, Langer2017, Langer2017_2}. 

\begin{theorem}\label{Thm:Step2Existence}
Let $d_0\in X:=\overline{BV(\Omega,\R) \cap L^2(\Omega,\R)}^{L^2}$ and $\vec{\xi}\in \B(H_0^{\div}(\Omega,\R^2))$. Then
\begin{align}\label{cIRP2}
		\min_{d\in BV(\Omega,\R)\cap L^2(\Omega,\R)}\left\{TV(d) + \left\langle d, \div\vec{\xi}\right\rangle_{L^2}\right\}\text{~~~~~~subject to $\|d - d_0\|_{L^2}^2 \leq \sigma^2$}
\end{align}
has a solution in $BV(\Omega,\R)\cap L^2(\Omega,\R)$. 
\end{theorem}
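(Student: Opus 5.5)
We use the direct method of the calculus of variations on the functional
\[
J(d):=TV(d)+\langle d,\div\vec\xi\rangle_{L^2}
\]
over the admissible set
\[
A:=\{d\in BV(\Omega,\R)\cap L^2(\Omega,\R)\colon \|d-d_0\|_{L^2}\le\sigma\}.
\]

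\textbf{Step 1: $A$ is nonempty.} Since $d_0\in X$ is an $L^2$-limit of functions in $BV(\Omega,\R)\cap L^2(\Omega,\R)$, there is some $\tilde d\in BV\cap L^2$ with $\|\tilde d-d_0\|_{L^2}<\sigma$. Hence $A\neq\emptyset$, and $\inf_A J<\infty$ because $J(\tilde d)<\infty$.

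\textbf{Step 2: $J$ is bounded below on $A$.} This is the step I expect to be the main obstacle. Since $\vec\xi\in H_0^{\div}$, we can write $|\langle d,\div\vec\xi\rangle|\le\|d\|_{L^2}\|\div\vec\xi\|_{L^2}$. For $d\in A$ we have $\|d\|_{L^2}\le\|d_0\|_{L^2}+\sigma$, so $J(d)\ge -(\|d_0\|_{L^2}+\sigma)\|\div\vec\xi\|_{L^2}>-\infty$.

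This already gives a finite lower bound, but a uniform TV bound along a minimizing sequence needs more care. I would prove
\[
\langle d,\div\vec\xi\rangle_{L^2}\ge -TV(d)\quad\text{for all } d\in BV\cap L^2.
\]
This follows from the definition of $TV$ once $\vec\xi$ may be replaced by $\mathcal C^1_0$ fields with $|\vec p|\le1$. For that replacement I would use the density result of \cite{HintermullerRautenberg:15}: the set $\B(\mathcal C_0^1(\Omega,\R^2))$ is dense in $\B(H_0^{\div}(\Omega,\R^2))$ with respect to the $H^{\div}$-norm, and the map $\vec p\mapsto\langle d,\div\vec p\rangle$ is continuous in that norm for $d\in L^2$. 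Consequently $J\ge0$, so $J$ is bounded below. Moreover, for any $\theta\in(0,1)$,
\[
J(d)\ge(1-\theta)TV(d)-\theta\|d\|_{L^2}\|\div\vec\xi\|_{L^2}.
\]
This is a coercivity-type bound: along a minimizing sequence $(d_n)\subset A$, $TV(d_n)$ stays bounded. Indeed, taking $\theta=\tfrac12$ gives $TV(d_n)\le 2J(d_n)+\|d_n\|_{L^2}\|\div\vec\xi\|_{L^2}$, and the right-hand side is bounded because the constraint bounds $\|d_n\|_{L^2}$.

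\textbf{Step 3: compactness.} $\Omega$ is bounded and Lipschitz, so $L^2\subset L^1$ and $\|d_n\|_{BV}$ is bounded. By the compact embedding of $BV$ into $L^1$, a subsequence converges in $L^1$ to some $d^*\in BV$. Since $(d_n)$ is bounded in $L^2$, a further subsequence converges weakly in $L^2$, and the two limits coincide. Hence $d^*\in BV\cap L^2$.

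\textbf{Step 4: lower semicontinuity and conclusion.} $TV$ is lower semicontinuous under $L^1$-convergence. The term $\langle d,\div\vec\xi\rangle$ is weakly continuous in $L^2$. The constraint set is convex and closed, hence weakly closed in $L^2$, so $\|d^*-d_0\|_{L^2}\le\sigma$. Therefore $d^*\in A$ and $J(d^*)\le\liminf_n J(d_n)=\inf_A J$, so $d^*$ solves \eqref{cIRP2}.
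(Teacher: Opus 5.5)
Your argument is correct and follows the same route as the paper. Both proofs get nonemptiness from the density defining $X$, prove $TV(d)+\langle d,\div\vec\xi\rangle_{L^2}\ge 0$ via the Hinterm\"uller--Rautenberg density result, bound a minimizing sequence in $BV$, and conclude by weak $L^2$ compactness and lower semicontinuity. The differences are cosmetic: the paper phrases the $BV$ bound as coercivity of $TV(d)+\langle d,\div\vec\xi\rangle_{L^2}+\|d-d_0\|_{L^2}^2$, and it uses Acar--Vogel's weak-$L^2$ lower semicontinuity of $TV$ where you use $L^1$ compactness.

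One small slip: for general $\theta$ the valid inequality is $J(d)\ge(1-\theta)\bigl(TV(d)-\|d\|_{L^2}\|\div\vec\xi\|_{L^2}\bigr)$, and yours fails for small $\theta$. The two agree at $\theta=\tfrac12$, the only case you use, so the proof stands. In fact $TV(d_n)\le J(d_n)+\|d_n\|_{L^2}\|\div\vec\xi\|_{L^2}$ holds directly from Cauchy--Schwarz, so the density lemma is not even needed for this existence result.
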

\begin{proof}
The assumption $d_0\in \overline{BV(\Omega,\R) \cap L^2(\Omega,\R)}^{L^2}$ guarantees that there is $d \in BV(\Omega,\R)$ such that $\|d-d_0\|_{L^2}^2 \leq \sigma^2$ and hence the feasible set $U :=\{d\in BV(\Omega,\R) \ : \ \|d-d_0\|_{L^2} \leq \sigma^2\}$ is non-empty. The rest of the proof is done in 3 steps:
\begin{enumerate}[1.)]
\item Show that $F(d) := TV(d) + \left\langle d, \div\vec{\xi} \right\rangle_{L^2}+ \|d - d_0\|_{L^2}^2$ is $BV$-coercive. Utilizing H\"older inequality and triangle inequality we obtain 
\begin{align*}
				F(d)
				&=TV(d) + \left\langle d, \div\xi\right\rangle_{L^2}+\|d-d_0\|^2_{L^2} \\
				&\geq TV(d) -\|d\|_{L^2}\|\div\xi\|_{L^2} + \left(\|d\|_{L^2}-\|d_0\|_{L^2}\right)^2	\nonumber \\
				&=TV(d) + \|d\|_{L^2}\left(\|d\|_{L^2}-2\|d_0\|_{L^2}-\|\div\xi\|_{L^2}\right)+\|d_0\|_{L^2}^2\\
				&\geq TV(d) + \underbrace{
				\frac{1}{|\Omega|^{1/2}}\|d\|_{L^1}\Big(\frac{1}{|\Omega|^{1/2}}\|d\|_{L^1}-2\|d_0\|_{L^2}-\|\div\xi\|_{L^2}\Big)}_{\to\infty\text{~for~}\|d\|_{L^1}\to\infty}+\frac{1}{2\mu}\|d_0\|_{L^2}^2.
				\end{align*}
If $\|d\|_{BV}\rightarrow\infty$, then at least one of $\|d\|_{L^1}$ or $TV(d)$ tends to infinity. Hence $F$ is $BV$-coercive.

\item Show that $TV(d) + \left\langle d, \div\xi\right\rangle_{L^2}$ is bounded from below by zero. Since $\B(C_0^{1}(\Omega,\R^2))$ is dense in the sense of $H_0^{\div}(\Omega,\R^2)$ in $\B(H_0^{\div}(\Omega,\R^2))$ \cite{HintermullerRautenberg:15} we obtain
\begin{equation}\label{eq:TV+_bounded}
\begin{split}
TV(d) + \left\langle d, \div \vec{\xi} \right\rangle_{L^2} &\geq TV(d) + \inf_{\vec{\xi} \in \B(H_0^{\div}(\Omega,\R^2))} \left\langle d, \div \vec{\xi} \right\rangle_{L^2}\\
&= TV(d) + \inf_{\vec{\xi} \in \B(H_0^{\div}(\Omega,\R^2))}  -\left\langle d, \div \vec{\xi} \right\rangle_{L^2}\\
&= TV(d) - \sup_{\vec{\xi} \in \B(H_0^{\div}(\Omega,\R^2))} \left\langle d, \div \vec{\xi} \right\rangle_{L^2} \\
&= TV(d) - \sup_{\vec{\xi} \in \B(C_0^{1}(\Omega,\R^2))} \left\langle d, \div \vec{\xi} \right\rangle_{L^2}= 0.
\end{split}
\end{equation}
for all $d\in L^2(\Omega,\R)$.

\item Since $TV(d) + \left\langle d, \div\xi\right\rangle_{L^2}$ is bounded from below, see \eqref{eq:TV+_bounded}, there is an infimal sequence $(d_k)_k \subset U$ of \eqref{eq:cIRPXi}. By the $BV$-coercivity we have that $(d_k)_k$ is bounded in $BV(\Omega,\R)$ and in $L^p(\Omega,\R)$, $1\leq p \leq \frac{N}{N-1}$, due to the Sobolev embedding; see e.g.\ \cite[Thm. 10.1.3]{AtBuMi:14}. Hence there exists a subsequence $(d_{k_\ell})_\ell$ which converges weakly in $L^2(\Omega,\R)$ to some $d^*\in L^2(\Omega,\R)$. Consequently we have $\lim_{\ell\to\infty}\left\langle d_{k_\ell}, \div \vec{\xi} \right\rangle_{L^2} = \left\langle d^*, \div \vec{\xi} \right\rangle_{L^2}$. The weak lower semi-continuity of the total variation with respect to the $L^2(\Omega,\R)$ topology \cite[Thm. 2.3]{AcarVogel:94} yields
\[
\liminf_{\ell \to \infty}TV(d_{k_\ell}) \geq TV(d^*)
\]
and hence $d^*\in BV(\Omega,\R)$. Further $(Dd_{k_\ell})_\ell$ converges weakly as a measure to $Dd^*$ \cite[Lemma 2.1]{AcarVogel:94}. Finally, since $\|\cdot - d_0\|_{L^2}^2$ is convex and continuous it is also weakly lower semi-continuous yielding
\begin{align*}
\|d^* - d_0\|_{L^2}^2 \leq \liminf_{\ell \to \infty} \|d_{k_\ell} - d_0\|_{L^2}^2 \leq \sigma^2.
\end{align*}
Thus, $d^* \in BV(\Omega,\R)\cap L^2(\Omega,\R)$ is a solution of \eqref{cIRP2}.
\end{enumerate}
\end{proof}

\begin{proposition}\label{prop:cIRP}
Assume that $\sigma\leq \|d_0- \int_\Omega d_0\|_{L^2}$ (cf.\ \cite[Remark 2]{ChambolleLions1997}), where $\int_\Omega d_0$ describes the average value of the function $d_0\in X$ in $\Omega$, and $\vec{\xi}\in \B(H_0^{\div}(\Omega,\R^2))$. 
Then the constrained problem \eqref{eq:cIRPXi} is equivalent to \eqref{cIRP2}.
\end{proposition}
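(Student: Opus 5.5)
We follow the strategy of \cite{ChambolleLions1997}. Write $J(d):=TV(d)+\langle d,\div\vec\xi\rangle_{L^2}$. The aim is to show that every solution $d^*$ of \eqref{cIRP2}, which exists by \cref{Thm:Step2Existence}, satisfies $\|d^*-d_0\|_{L^2}=\sigma$. Then the minimizers of \eqref{cIRP2} are feasible for \eqref{eq:cIRPXi}. Conversely, since the constraint set of \eqref{eq:cIRPXi} is contained in that of \eqref{cIRP2}, the two problems have the same infimum and the same minimizers.

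\emph{Step 1: convexity and the structure of $J$.} $J$ is convex and lower semicontinuous. By \eqref{eq:TV+_bounded}, $J\geq 0$ on $BV(\Omega,\R)\cap L^2(\Omega,\R)$. Moreover $J$ vanishes on constants: $TV(c)=0$, and
\[
\langle c,\div\vec\xi\rangle_{L^2}=-\langle\grad c,\vec\xi\rangle_{L^2}=0
\]
because $\vec\xi\in H_0^{\div}$, via the Green formula with $\vec\xi\cdot\vec e=0$. Hence $\bar d:=\int_\Omega d_0$ (the average) is a global minimizer of $J$ with $J(\bar d)=0$.

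\emph{Step 2: the constraint is active.} Suppose $d^*$ solves \eqref{cIRP2} with $\|d^*-d_0\|_{L^2}<\sigma$. By convexity of $J$ and of the constraint set, a local minimizer in the interior of the ball is a global unconstrained minimizer of $J$, so $J(d^*)=0$. One could use this directly to show that $J(d)=0$ forces $d$ to be constant; however, this fails in general because of the linear term, so this is not the route we take. Instead, we use the segment $d_t:=(1-t)d^*+t\bar d$. For small $t>0$ it stays in the open ball and satisfies $J(d_t)\le(1-t)J(d^*)+tJ(\bar d)$. The real argument runs as follows. If the constraint is inactive, then $0\in\partial J(d^*)$, so $d^*$ minimizes $J$ globally and $J(d^*)=0=\min J$. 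Then the set of minimizers of \eqref{cIRP2} contains $\{d:J(d)=0,\ \|d-d_0\|\le\sigma\}$. The assumption $\sigma\le\|d_0-\bar d\|_{L^2}$ guarantees that $\bar d$ lies outside or on the boundary of the ball. If it lies on the boundary, $\bar d$ itself is a feasible minimizer of \eqref{eq:cIRPXi}. If it lies outside, the convex minimizer set $\{J=0\}$ contains both $d^*$ (inside the ball) and $\bar d$ (outside), so the segment between them crosses the sphere $\|d-d_0\|=\sigma$ at a point with $J=0$. In every case there is a minimizer of \eqref{cIRP2} attaining equality, so $\min\eqref{eq:cIRPXi}=\min\eqref{cIRP2}$. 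Conversely, any solution of \eqref{eq:cIRPXi} is feasible for \eqref{cIRP2} and attains this common value, so it solves \eqref{cIRP2}.

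\emph{Main obstacle.} The delicate point is justifying that $J(c)=0$ for constants, which needs the zero normal trace of $\vec\xi$ and the fact that $J\ge 0$ from \eqref{eq:TV+_bounded}. A second difficulty is making precise what ``equivalent'' means when the minimizer set of $J$ is larger than the constants. We therefore interpret equivalence as equality of the optimal values together with the relations between the solution sets derived above: every solution of \eqref{eq:cIRPXi} solves \eqref{cIRP2}, and \eqref{cIRP2} always has a solution on the sphere. Uniqueness of the minimizers is not claimed.
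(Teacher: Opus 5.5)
Your proof is correct, and its mechanism is the paper's. You start from a minimizer of \eqref{cIRP2} and move along the segment towards the mean $\bar d$. The intermediate value theorem, using $\sigma\le\|d_0-\bar d\|_{L^2}$, gives a point on the sphere $\|d-d_0\|_{L^2}=\sigma$. Then $J$ does not increase along the way, because $J\ge 0$ by \eqref{eq:TV+_bounded} and $J(\bar d)=0$ by \eqref{eq:OrthConstDiv}.

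The paper gets the last point from the exact identity $J(\bar d+s(\tilde d-\bar d))=sJ(\tilde d)$, which uses positive homogeneity of $TV$ and orthogonality of constants to $\div\vec\xi$. You get it from convexity. Convexity alone already gives $J(d_t)\le (1-t)J(d^*)\le J(d^*)$ for \emph{every} minimizer $d^*$ and every $t\in[0,1]$. So the active/inactive case split, the local-implies-global step and the discarded remark in Step~2 can all be dropped. Your opening sentence (that every solution of \eqref{cIRP2} lies on the sphere) should be deleted: it is not what you prove, and it is false in general.

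On the conclusion you part ways with the paper, and you are right to. The paper ends with ``this yields $s=1$ and $d'=\tilde d$'', i.e.\ every solution of \eqref{cIRP2} is feasible for \eqref{eq:cIRPXi}. But $sJ(\tilde d)\ge J(\tilde d)$ only gives $s=1$ \emph{or} $J(\tilde d)=0$. For $\vec\xi\ne 0$ the set $\{J=0\}$ can contain nonconstant functions strictly inside the ball.

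For instance, on $\Omega=(0,1)^2$ take $\vec\xi=(\phi(x),0)$ with $\phi(x)=\min\{4x,1,4(1-x)\}$, and $d_0(x,y)=\min\{\max\{x,\tfrac14\},\tfrac34\}$. Then $\vec\xi\in\B(H_0^{\div}(\Omega,\R^2))$, $d_0\in H^1(\Omega,\R)$, and $J(d_0)=\tfrac12-\int_{1/4}^{3/4}\phi\,dx=0$. So $d_0$ solves \eqref{cIRP2} for every $0<\sigma\le\|d_0-\bar d\|_{L^2}$, yet it does not solve \eqref{eq:cIRPXi}.

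Your reading of ``equivalent'' is therefore the most one can prove under the stated hypotheses: equal optimal values, every solution of \eqref{eq:cIRPXi} solves \eqref{cIRP2}, and \eqref{eq:cIRPXi} is solvable. The paper's stronger conclusion is recovered if additionally $\|\vec\xi\|_{L^\infty}<1$ (cf.\ \cref{Rem:IRPW11}). Applying \eqref{eq:TV+_bounded} to $\vec\xi/\|\vec\xi\|_{L^\infty}$ (for $\vec\xi\neq 0$) gives $J\ge(1-\|\vec\xi\|_{L^\infty})\,TV$. A minimizer strictly inside the ball has $J=0$, so it must be a constant $c$. That is impossible, since $\|d_0-c\|_{L^2}\ge\|d_0-\bar d\|_{L^2}\ge\sigma$.
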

\begin{proof}
Let $\tilde{d}$ be a solution of \eqref{cIRP2}. Then there exists $d\in BV(\Omega,\R)$ such that $\tilde{d} = d + \int_\Omega d_0$. We define the continuous function $f(s):= \|d_0 - (sd + \int_{\Omega} d_0) \|_{L^2}^2$ for $s\in[0,1]$. By the assumption $\|d_0- \int_\Omega d_0\|_{L^2} \geq \sigma$ we obtain that $f(0) \geq \sigma^2$ and $f(1) = \|d_0 - \tilde{d} \|_{L^2}^2\leq \sigma^2$. Since $f$ is continuous there exists some $s\in[0,1]$ such that $f(s)=\sigma^2$. Set $d'=sd + \int_\Omega d_0$ such that $\|d_0 - d'\|_{L^2}^2 = \sigma^2$. Then by using \eqref{eq:TV+_bounded} and the fact that for $\vec\xi\in H_0^{\div}(\Omega,\R^2)$ we have
\begin{align}\label{eq:OrthConstDiv}
\left\langle\int_{\Omega}d_0,\div\vec\xi\right\rangle_{L^2}
=-\left\langle\grad\int_{\Omega}d_0,\vec\xi\right\rangle_{L^2}=0,
\end{align}
 we get
\begin{align*}
TV(d') + \left\langle d', \div \vec{\xi} \right\rangle_{L^2} 
&\stackrel{\eqref{eq:OrthConstDiv}}{=} s TV(d) + s \left\langle d+ \int_\Omega d_0, \div \vec{\xi} \right\rangle_{L^2} = s TV(\tilde{d}) + s \left\langle \tilde{d}, \div \vec{\xi} \right\rangle_{L^2} \\
&\stackrel{w.l.s.c.}{\le} \liminf_{k\to\infty} s \left(TV(d_{k}) +  \left\langle d_{k}, \div \vec{\xi} \right\rangle_{L^2}\right) \\
&\stackrel{\eqref{eq:TV+_bounded}}{\le} \liminf_{k\to\infty}TV(d_{k}) +  \left\langle d_{k}, \div \vec{\xi} \right\rangle_{L^2},
\end{align*}
where $(d_{k})_k$ is a minimizing sequence of \eqref{cIRP2} weakly converging to $\tilde{d}$ with respect to the $L^2$-topology. Thus $d'$ is a solution of \eqref{cIRP2} and naturally also of \eqref{eq:cIRPXi}. Since $\tilde{d}$ is a solution of \eqref{cIRP2} we cannot have $TV(d') + \left\langle d', \div \vec{\xi} \right\rangle_{L^2} < TV(\tilde{d}) + \left\langle \tilde{d}, \div \vec{\xi} \right\rangle_{L^2}$. This yields $s=1$ and $d'=\tilde{d}$.
\end{proof}
An obvious consequence of this statement is that \eqref{eq:cIRPXi} has a solution which is the same as the solution of \eqref{cIRP2}, as clearly stated in the proof.
\begin{theorem}\label{Thm:Equiv}
Assume that $0< \sigma\leq \|d_0- \int_\Omega d_0\|_{L^2}$, $d_0\in X$ and $\vec\xi\in \B(H_0^{\div}(\Omega,\R^2))$. Then there exists $\alpha\geq 0$ such that the constrained optimization problem \eqref{eq:cIRPXi} is equivalent to the unconstrained problem 
\begin{align}\label{IRPXi}
		\min_{d\in BV(\Omega,\R)\cap L^2(\Omega,\R)}  TV(d) + \left\langle d, \div\vec{\xi}\right\rangle_{L^2}+\frac{\alpha}{2}\|d-d_0\|^2_{L^2}
	\end{align}
	and possesses a unique solution.
\end{theorem}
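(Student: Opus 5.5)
By \cref{prop:cIRP}, problem \eqref{eq:cIRPXi} is equivalent to the inequality-constrained problem \eqref{cIRP2}, and by \cref{Thm:Step2Existence} the latter has a solution $\tilde d$. So it suffices to relate \eqref{cIRP2} to \eqref{IRPXi}, which I would do by Lagrangian duality in the style of \cite{ChambolleLions1997}. I would work on $L^2(\Omega,\R)$ with the convex, proper, lower semicontinuous functional
\[
J(d):=TV(d)+\langle d,\div\vec\xi\rangle_{L^2}
\]
(set to $+\infty$ outside $BV$) and the convex constraint $G(d):=\|d-d_0\|_{L^2}^2-\sigma^2\le 0$.

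\emph{Step 1: existence of a multiplier.} Slater's condition holds because $d_0\in X$ and $\sigma>0$: there is $\bar d\in BV(\Omega,\R)\cap L^2(\Omega,\R)$ with $\|\bar d-d_0\|_{L^2}<\sigma$. Moreover $J$ is continuous along $L^2$-segments within its domain, so no interior-point issue arises for the constraint. By the Karush--Kuhn--Tucker theorem for convex problems in Banach spaces (e.g.\ Ekeland--Temam), there exists $\lambda\ge0$ such that $\tilde d$ minimizes $J+\lambda G$ and $\lambda G(\tilde d)=0$. Setting $\alpha:=2\lambda$ shows that $\tilde d$ solves \eqref{IRPXi}.

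\emph{Step 2: the converse and $\alpha>0$.} If $\lambda=0$, then $\tilde d$ minimizes $J$. By \eqref{eq:TV+_bounded} we have $J\ge0$, and $J(c)=0$ for constants $c$ by \eqref{eq:OrthConstDiv}. The argument of \cref{prop:cIRP} (scaling toward the mean) then shows that the constraint is active anyway. I would handle this degenerate case separately: either show that $\alpha>0$ can still be chosen, or accept $\alpha=0$ as the statement allows. This is the place to be careful.

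For $\alpha>0$, let $d_\alpha$ be a minimizer of \eqref{IRPXi}. From
\[
J(d_\alpha)+\tfrac\alpha2\|d_\alpha-d_0\|^2\le J(\tilde d)+\tfrac\alpha2\sigma^2
\qquad\text{and}\qquad
J(\tilde d)\le J(d)\ \text{for feasible } d,
\]
standard complementary slackness gives equivalence of the two solution sets.

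\emph{Step 3: uniqueness.} For $\alpha>0$ the functional in \eqref{IRPXi} is strictly convex, since $J$ is convex and $\tfrac\alpha2\|\cdot-d_0\|^2$ is strictly convex. It is also coercive by the estimate in Step 1 of the proof of \cref{Thm:Step2Existence}, so its minimizer is unique. Uniqueness for \eqref{cIRP2}/\eqref{eq:cIRPXi} follows by the following argument. If $d_1\ne d_2$ are both solutions with $\|d_i-d_0\|=\sigma$, their midpoint satisfies $\|\tfrac{d_1+d_2}{2}-d_0\|<\sigma$ by strict convexity of the Hilbert norm, and $J$ is not larger there. This yields a solution of \eqref{cIRP2} with an inactive constraint, contradicting \cref{prop:cIRP}, whose proof forces every solution onto $\|d-d_0\|=\sigma$.

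The main obstacle is Step 1's justification of the multiplier, namely the qualification condition in infinite dimensions with $J$ only lower semicontinuous, together with excluding or handling $\alpha=0$.
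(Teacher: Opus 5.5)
Your route is the paper's. The paper obtains the multiplier from the sum rule $\partial(R+G)=\partial R+\partial G$, where $R=J$ and $G$ is the indicator of the ball $\{\|d-d_0\|_{L^2}\le\sigma\}$. It justifies the rule by continuity of $G$ at some $\tilde d\in\Dom(R)$ with $\|\tilde d-d_0\|_{L^2}\le\sigma/2$, and reads off $\partial G(d)=\{\alpha(d-d_0):\alpha\ge0\}$ on the sphere. That is your Slater/KKT step in subdifferential form, and the paper likewise gets uniqueness from strict convexity. Your Step 1 is fine: for one real-valued continuous convex constraint with a Slater point, no continuity of $J$ is needed.

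The genuine gap is the one you flag, and neither of your proposed exits works. Accepting $\alpha=0$ fails, because then every constant minimizes \eqref{IRPXi} (since $J\ge0=J(c)$), so uniqueness is lost. Moreover, for $\sigma<\|d_0-\int_\Omega d_0\|_{L^2}$ no constant is feasible for \eqref{eq:cIRPXi}, so the converse half of the equivalence fails as well. Your claim that the scaling argument of \cref{prop:cIRP} makes the constraint active is false exactly in this case. Here $\lambda=0$ means $J(\tilde d)=\min J=0$, so $J(d')=sJ(\tilde d)=J(\tilde d)$ and nothing forces $s=1$. The same objection hits Step 3, where you rely on \cref{prop:cIRP} putting every solution on the sphere.

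The missing observation is that $\lambda=0$ forces $\tilde d$ to be a global minimizer of $J$, i.e.\ the minimum in \eqref{cIRP2} must be $0$. So if that minimum is positive, every multiplier is positive and the scaling argument does force activity. Your Steps 2--3 then finish the proof: any other constrained solution $d'$ satisfies $J(d')+\frac{\alpha}{2}\|d'-d_0\|_{L^2}^2\le J(\tilde d)+\frac{\alpha}{2}\sigma^2$, hence is the unique minimizer of \eqref{IRPXi}.

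If the minimum is $0$, the statement itself can fail, so no argument rescues this case. Take $\Omega=(0,1)^2$ and $\vec\xi=(\phi(x),0)$ with $\phi$ Lipschitz, $0\le\phi\le1$, $\phi(0)=\phi(1)=0$ and $\phi=1$ on $[\frac14,\frac34]$. Then $\vec\xi\in\B(H_0^{\div}(\Omega,\R^2))$, and $J(f(x))=\int_0^1 f'(1-\phi)\,dx=0$ for every nondecreasing Lipschitz $f$ that is constant off $[\frac14,\frac34]$. Choose a nonconstant $d_0=f_0(x)$ of this kind and a small $\sigma>0$. Both $d_0+\sigma$ and $d_0-\sigma$ then solve \eqref{eq:cIRPXi}, while for every $\alpha>0$ the unique minimizer of \eqref{IRPXi} is $d_0$ itself, which is infeasible.

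The paper's proof has the same hole: it invokes strict convexity of \eqref{IRPXi} for $\alpha\ge0$ and calls the converse obvious. What is needed is the extra hypothesis that the minimum of \eqref{cIRP2} is positive. Under it $\alpha>0$ and your argument goes through.
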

\begin{proof}
Set $R(d) = TV(d) + \left\langle d, \div\vec{\xi}\right\rangle_{L^2}$ and
\[
G(d)=\
\begin{cases}
+\infty  & \text{if } \|d-d_0\|_{L^2} >\sigma,\\
0         & \text{if } \|d-d_0\|_{L^2} \leq\sigma.
\end{cases}
\]
Notice that $R$ and $G$ are convex and l.s.c functions and problem \eqref{cIRP2} is equivalent to $\min_d {R}(d)+\mcG(d)$. Noting that $\div\vec{\xi}\in L^2(\Omega,\R)$ we have $\Dom(R) = BV(\Omega,\R) \cap L^2(\Omega,\R)$ and $\Dom (G)=\{u\in L^2(\Omega,\R) : G(u)<+\infty\}$. Since $d_0\in \overline{\Dom(R)}^{L^2}$, there exists $\tilde d\in \Dom(R)$ with $\|\tilde d - d_0\|_{L^2} \leq \sigma/2$. As $G$ is continuous at $\tilde{d}$ by \cite[Prop. 5.6, p. 26]{ekeland:1999} we obtain
\[
\partial ( R + G)(d) = \partial R(d) + \partial G(d) 
\]
for all $d$, where $\partial G(d)=\{0\}$ if $\|d-d_0\|_{L^2}<\sigma$ and $\partial \mcG(d)=\{\alpha (d-d_0), \alpha\geq 0\}$ if $\|d-d_0\|_{L^2}=\sigma$.

If $d$ is a solution of \eqref{cIRP2} and hence of \eqref{eq:cIRPXi}, then 
\[
0\in \partial (R +G)(d) =  \partial R(d) + \partial G(d).
\]
Since any solution of \eqref{eq:cIRPXi} satisfies $\|d-d_0\|_{L^2}=\sigma$, this shows that there exists an $\alpha\geq 0$ such that 
\[
0\in \partial R(d) + \alpha (d-d_0).
\]
Hence, for this $\alpha\geq 0$, $d$ is a minimizer of the problem in \eqref{IRPXi}. 

Conversely, a minimizer $d$ of \eqref{IRPXi} with the above $\alpha$ is obviously a solution of \eqref{eq:cIRPXi} with $\|d-d_0\|_{L^2}=\sigma$, which shows the equivalence.

Moreover, since the functional in \eqref{IRPXi} is strictly convex the minimizer is unique.
\end{proof}
A straightforward calculation shows that \eqref{IRPXi} is equivalent to 
\begin{align}\label{IRPXi_ROF}
		\min_{d\in BV(\Omega,\R)\cap L^2(\Omega,\R)}  TV(d) +\frac{\alpha}{2}\|d-d_0 + \frac{1}{\alpha}\div\vec{\xi}\|^2_{L^2}.
	\end{align}

\begin{remark}\label{Rem:IRPW11}
If $\vec{\xi}\in \B(L^\infty(\Omega,\R^2))$, then under the additional assumptions that $\|\vec{\xi}\|_{L^{\infty}}<1$ and $d_0 = \overline{W^{1,1}(\Omega,\R)\cap L^2(\Omega,\R)}^{L^2}$ we can show similar results as above. That is, similar to \cref{Thm:Step2Existence}, \cref{prop:cIRP} and \cref{Thm:Equiv}, the constrained optimization problem \eqref{eq:cIRPXi} is related to the unconstrained optimization problem  
\begin{align}\label{IRW11}
		\min_{d\in L^2(\Omega,\R)\cap W^{1,1}(\Omega,\R)}\left\{\|\grad d\|_{L^1} - \left\langle\grad d, \vec{\xi}\right\rangle_{L^2}+\frac{\alpha}{2}\|d-d_0\|^2_{L^2}\right\}
	\end{align}
and possesses a solution in $L^2(\Omega,\R)\cap W^{1,1}(\Omega,\R)$. This relies on the fact that $W^{1,1}(\Omega,\R)$ is compactly embedded in $L^1(\Omega,\R)$ and the functional in \eqref{IRW11} is bounded from below, $W^{1,1}$-coercive and weakly lower semicontinuous. 

Remark that the requirement $\|\vec{\xi}\|_{L^{\infty}}<1$ automatically holds, if $\vec{\xi}$ is chosen as in \eqref{eq:defXi2}. 
\end{remark}

Note that $H^1(\Omega,\R) \subset \overline{W^{1,1}(\Omega,\R)\cap L^2(\Omega,\R)}^{L^2} \subset \overline{BV(\Omega,\R) \cap L^2(\Omega,\R)}^{L^2}$ and hence \cref{Thm:Step2Existence,prop:cIRP,Thm:Equiv,Rem:IRPW11} especially hold for $d_0\in H^1(\Omega,\R)$. 

\subsubsection{Dual Formulation}

We next present dual formulations of the unconstrained problems \eqref{IRPXi} and \eqref{IRW11} within their appropriate functional-analytic frameworks. The general dualization strategy underlying these derivations is detailed in \cref{Sec:Dualization}.

\begin{corollary}[Dualization of Image Reconstruction]\label{Cor:irdual}
Let $\vec{\xi}\in\B(H_0^{\div}(\Omega,\R^2))$, $\alpha > 0$ and $d_0 \in L^2(\Omega,\R^2)$. Then the solution $d$ of \eqref{IRPXi} (and \eqref{IRPXi_ROF}) fulfills the equation
			\begin{align}\label{eq:tau_hat}
			d = d_0-\frac{1}{\alpha} (\div \vec{p} + \div \vec{\xi}),
			\end{align}
where $\vec{p}$ is the solution of the optimization problem
\begin{align}\label{eq:IRPXi_dual}
\min\limits_{\vec p\in\B(H_0^{\div}(\Omega,\R^{2}))}\left\| \div \vec{p}-(\alpha d_0 -  \div \vec{\xi})\right\|_{L^2}^2.
\end{align}
\end{corollary}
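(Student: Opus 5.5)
The plan is to mirror the proof of \cref{Cor:tfsdual}: obtain the statement as a special case of the general duality result \cref{Thm:Dual} from \cref{Sec:Dualization}. This time we take $c=1$ and the trivial constraint space $\bfK:=L^2(\Omega,\R)$, so that $\mcP[\bfK]=I$ and no projection appears in the dual. The starting point is the ROF-type reformulation \eqref{IRPXi_ROF}. Expanding the square there shows that \eqref{IRPXi} and \eqref{IRPXi_ROF} differ only by the constant $\frac{1}{2\alpha}\|\div\vec\xi\|_{L^2}^2-\langle d_0,\div\vec\xi\rangle_{L^2}$, so they have the same minimizer. This minimizer exists and is unique by \cref{Thm:Equiv}.

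The first step is to put \eqref{IRPXi} in the template of \cref{Thm:Dual}. Write the objective as
\[
TV(d)+\frac{\beta}{2}\|d\|_{L^2}^2+\langle l,d\rangle_{L^2}
\]
up to constants, with $\beta=\alpha$ and $l=\div\vec\xi-\alpha d_0\in L^2(\Omega,\R)$. This uses $\vec\xi\in H_0^{\div}$, so that $\div\vec\xi\in L^2$, and $d_0\in L^2$.

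The second step is to read off the conclusion. If \cref{Thm:Dual} is the standard Fenchel duality, the dual is
\[
\min_{\vec p\in\B(H_0^{\div}(\Omega,\R^{2}))}\|\div\vec p+l\|_{L^2}^2 ,
\]
possibly with a sign convention $\vec p\mapsto-\vec p$, which is harmless because $\B(\cdot)$ is symmetric. With $l=\div\vec\xi-\alpha d_0$ this gives exactly \eqref{eq:IRPXi_dual}. The primal--dual relation then has the form $d=-\beta^{-1}(\div\vec p+l)$, which reads
\[
d=d_0-\frac1\alpha(\div\vec p+\div\vec\xi),
\]
that is, \eqref{eq:tau_hat}.

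As a self-contained check, independent of the exact form of \cref{Thm:Dual}, I would verify the duality directly. Write $TV(d)=\sup_{\vec p\in\B(H_0^{\div})}\langle d,\div\vec p\rangle_{L^2}$; this uses the density of $\B(C_0^1)$ in $\B(H_0^{\div})$ from \cite{HintermullerRautenberg:15}, exactly as in \eqref{eq:TV+_bounded}. The minimax problem
\[
\min_d\sup_{\vec p}\Big\{\langle d,\div\vec p+\div\vec\xi\rangle_{L^2}+\frac\alpha2\|d-d_0\|_{L^2}^2\Big\}
\]
is convex--concave. The set $\B(H_0^{\div})$ is convex, and it is weakly compact in the $L^2$ sense once the divergence part is controlled. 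The quadratic term is coercive in $d$, so a minimax theorem allows the inf and sup to be interchanged. Minimizing the inner problem in $d$ gives \eqref{eq:tau_hat}. Substituting back leaves the concave dual $-\frac1{2\alpha}\|\div\vec p+\div\vec\xi-\alpha d_0\|_{L^2}^2$ plus constants, and maximizing it is \eqref{eq:IRPXi_dual}.

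The main obstacle is existence of a dual minimizer $\vec p$, because $\B(H_0^{\div})$ is not weakly closed in $H^{\div}$ in a way that automatically controls $\div\vec p$. I would handle it as follows. A minimizing sequence is bounded in $L^2$ since $|\vec p|\le1$, and $\div\vec p_n$ is bounded in $L^2$ by the functional itself. So a subsequence converges weakly in $H^{\div}$. The limit satisfies the pointwise bound, since $\B$ is convex and strongly closed, and it keeps zero normal trace, since $H_0^{\div}$ is a closed subspace. Weak lower semicontinuity of the norm then yields a minimizer. Uniqueness of $\div\vec p$, though not of $\vec p$ itself, follows from strict convexity in $\div\vec p$, so $d$ in \eqref{eq:tau_hat} is well defined.
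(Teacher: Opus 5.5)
Your proposal is correct and is essentially the paper's proof. The paper simply instantiates \cref{Thm:Dual} with $c=1$, $\bfK=L^2(\Omega,\R)$, $\beta=\alpha$ and $l=\div\vec{\xi}-\alpha d_0$, exactly as in your first route. Your supplementary arguments fill in what the paper takes for granted: the direct minimax check, and existence of a dual minimizer via the $L^2$-bound on $\div\vec{p}_n$ together with weak closedness of $\B(H_0^{\div}(\Omega,\R^2))$ in $H^{\div}$. This includes your correct observation that only $\div\vec{p}$, not $\vec{p}$, is unique.

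One small correction concerns existence and uniqueness of the primal minimizer for an arbitrary $\alpha>0$. Appeal to the ROF form \eqref{IRPXi_ROF} with data $d_0-\frac{1}{\alpha}\div\vec{\xi}\in L^2(\Omega,\R)$, which gives coercivity plus strict convexity (cf.\ \cite{ChambolleLions1997}). Do not appeal to \cref{Thm:Equiv}, which only concerns one particular $\alpha$ determined by $\sigma$.
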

\begin{proof}
Applying \cref{Thm:Dual} with $c=1$, $\bfK=L^2(\Omega,\R)$, $\beta=\alpha$ and $l= -\beta d_0+\div\vec{\xi}$ yields the assertion. 
\end{proof}
We remark that \cref{Cor:irdual} resembles well-known dualization results of total variation minimization; see e.g.\ \cite{Chambolle:2004,  HiLaAl2023, HintermullerKunisch:04}.

\begin{proposition}
The dual problem of \eqref{IRW11} is given by
\begin{equation}\label{IRW11:dual}
\inf_{\vec{p} \in \tilde{\B}} \|\div \vec{p} - \alpha d_0\|_{L^2}^2, 
\end{equation}
where $\tilde{\B}:=\{\vec{p} \in H_0^{\div}(\Omega,\R^2) \colon |\vec{p}-\vec{\xi}| \leq 1 \text{ a.e.\ in } \Omega\}$. Further the solutions $d$ and $\vec{p}$ of \eqref{IRW11} and \eqref{IRW11:dual} respectively are connected by
\begin{align*}
 -\div \vec{p} = \alpha (d-d_0) &\\
 (-\vec{p} + \vec{\xi}) |\grad d|= \grad d, &\qquad |\vec{p} - \vec{\xi}| \leq 1.
 \end{align*}
\end{proposition}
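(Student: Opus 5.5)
We derive the dual of \eqref{IRW11} with Fenchel--Rockafellar duality, using the same scheme as \cref{Thm:Dual}; the only change is that the regulariser $\|\grad d\|_{L^1}-\langle \grad d,\vec\xi\rangle_{L^2}$ is treated as a single support function. The key observation is that for a.e.\ $x$ and all $q\in\R^2$,
\[
|q| - q\cdot\vec\xi(x) = \sup_{|z|\le 1} q\cdot(z-\vec\xi(x)) = \sup_{|w+\vec\xi(x)|\le 1} q\cdot w ,
\]
with $w=z-\vec\xi$. Hence, after the substitution $\vec p = \vec\xi - \vec z$ (signs to be fixed to match the stated relations),
\[
\|\grad d\|_{L^1}-\langle\grad d,\vec\xi\rangle_{L^2} = \sup_{\vec p\in\tilde{\B}} \langle d, \div\vec p\rangle_{L^2}
\]
for $d\in W^{1,1}(\Omega,\R)\cap L^2(\Omega,\R)$. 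For fixed $d$ this is pointwise optimisation under the integral: take a measurable selection $\vec z=\grad d/|\grad d|$ where $\grad d\neq0$. Since $\vec\xi\in L^\infty$ only, the maximiser need not lie in $H_0^{\div}$. We therefore need a density statement: $\{\vec p\in C_0^1 : |\vec p-\vec\xi|\le 1\}$, or at least $\tilde{\B}$, must be dense enough that the supremum over $\tilde{\B}$ equals the pointwise supremum.

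This density is the main obstacle, because the constraint set is shifted by the merely bounded field $\vec\xi$. I would use $\|\vec\xi\|_{L^\infty}<1$ from \cref{Rem:IRPW11}. Given a target $\vec z$ with $|\vec z|\le 1$, first take $\vec z_\theta=\theta\vec z$ with $\theta<1$. Then mollify $\vec\xi-\vec z_\theta$ and multiply by a cutoff function to enforce $\vec p\cdot\vec e=0$ on $\partial\Omega$. Near the boundary, where the cutoff is small, $|\vec p-\vec\xi|\le|\vec\xi|<1$ gives strict slack. In the interior, mollification keeps the pointwise bound up to an error that is controlled after shrinking by $\theta$. Passing $\theta\to1$ and the mollification parameter $\to0$, with convergence of $\langle\grad d,\vec p\rangle$ by dominated convergence since $\grad d\in L^1$, yields the identity. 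This mirrors the density argument from \cite{HintermullerRautenberg:15} used in \eqref{eq:TV+_bounded}.

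With the support-function representation in hand, the primal value is $\inf_d\sup_{\vec p\in\tilde{\B}} \langle d,\div\vec p\rangle+\frac\alpha2\|d-d_0\|^2$. The set $\tilde{\B}$ is convex and closed, and the function is strongly convex and quadratic in $d$. A minimax theorem applies: concave--convex, $\tilde{\B}$ bounded in $L^2$, and coercive in $d$; alternatively we invoke \cref{Thm:Dual} with $\B$ replaced by $\tilde{\B}$ and $l=-\alpha d_0$. This lets us swap $\inf$ and $\sup$. The inner minimisation over $d\in L^2$ gives $d=d_0-\alpha^{-1}\div\vec p$, which is the first optimality relation $-\div\vec p=\alpha(d-d_0)$. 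Substituting back gives $\frac{1}{2\alpha}\bigl(\|\alpha d_0\|^2-\|\div\vec p-\alpha d_0\|^2\bigr)$, and maximising this over $\vec p$ is \eqref{IRW11:dual}.

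The second relation comes from the saddle-point condition. At the optimum, equality must hold in $\langle d,\div\vec p\rangle=\|\grad d\|_{L^1}-\langle\grad d,\vec\xi\rangle$, that is,
\[
\int_\Omega \bigl(|\grad d| - \grad d\cdot(\vec\xi-\vec p)\bigr)\dx = 0 .
\]
The integrand is nonnegative a.e.\ because $|\vec\xi-\vec p|\le1$, so it vanishes a.e. This forces $(\vec\xi-\vec p)|\grad d|=\grad d$ a.e.

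Because the infimum in \eqref{IRW11:dual} need not be attained in $H_0^{\div}$, we state the dual as an $\inf$. The optimality relations are then interpreted for a dual solution whenever one exists, which holds for instance by weak compactness in $H^{\div}$ of a minimising sequence: $\div\vec p$ is bounded by the objective and $\vec p$ by $L^\infty$.
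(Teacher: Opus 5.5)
There is a genuine gap in your key step, the identity
\[
J(d):=\|\grad d\|_{L^1}-\langle\grad d,\vec\xi\rangle_{L^2}=\sup_{\vec p\in\tilde{\B}}\langle d,\div\vec p\rangle_{L^2}\qquad\text{for all } d\in W^{1,1}(\Omega,\R)\cap L^2(\Omega,\R).
\]
\textbf{Why the mollification argument fails.} For $\vec\xi$ merely in $L^\infty$, $\rho_\epsilon*\vec\xi\to\vec\xi$ only a.e.\ and in $L^q$, not uniformly. Near a discontinuity of $\vec\xi$ the error $|\rho_\epsilon*\vec\xi-\vec\xi|$ therefore stays of size comparable to $\|\vec\xi\|_{L^\infty}$ on a set of positive measure, for every $\epsilon$. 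The slack $1-\theta$ cannot absorb this, yet $|\vec p-\vec\xi|\le 1$ must hold a.e. The density result you cite is for the unshifted constraint $|\vec p|\le 1$, not for a constraint shifted by a measurable field.

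\textbf{The identity itself is false in general.} Take $\Omega=(0,1)^2$ and a fat Cantor set $C\subset[0,1]$, so that $U:=(0,1)\setminus C$ is open and dense. Let $\vec a=(0.9,0)$ and set $\vec\xi=\vec a$ on $C\times(0,1)$ and $\vec\xi=-\vec a$ on $U\times(0,1)$. Then $\|\vec\xi\|_{L^\infty}<1$.

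For $\vec p\in H_0^{\div}(\Omega,\R^2)$, apply Green's formula with test functions depending only on $x$. This shows that $P(x):=\int_0^1 p_1(x,y)\,dy$ satisfies $P'=\int_0^1\div\vec p\,dy\in L^2(0,1)$, so $P$ is continuous. If $\vec p\in\tilde{\B}$, then $p_1\le 0.1$ a.e.\ on $U\times(0,1)$. Hence $P\le 0.1$ on a dense set, and by continuity everywhere. For $d(x,y):=-\int_0^x\mathbf{1}_C$ you then get
\[
\sup_{\vec p\in\tilde{\B}}\langle d,\div\vec p\rangle_{L^2}=\sup_{\vec p\in\tilde{\B}}\int_C P\,dx\le 0.1\,|C|<1.9\,|C|=J(d).
\]
So no density argument can rescue this step.

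\textbf{What works instead.} You need the statement on the conjugate side, which is where the paper's Fenchel computation of $\mcG^*$ operates. Suppose $w\in L^2(\Omega,\R)$ satisfies $\langle w,d\rangle_{L^2}\le J(d)$ for all $d$.
\begin{itemize}
\item The map $\grad d\mapsto\langle w,d\rangle_{L^2}$ on $\{\grad d\}\subset L^1(\Omega,\R^2)$ is dominated by the continuous sublinear functional $\vec q\mapsto\int_\Omega|\vec q|-\vec q\cdot\vec\xi\dx$.
\item A Hahn--Banach extension is represented by some $\vec\eta\in L^\infty$ with $|\vec\eta+\vec\xi|\le 1$ a.e.
\item Then $\vec p:=-\vec\eta$ lies in $\tilde{\B}$ and satisfies $\div\vec p=w$.
\end{itemize}
The reverse inclusion follows from integration by parts, so $J^*=\chi_{\div\tilde{\B}}$. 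Consequently $\sup_{\vec p\in\tilde{\B}}\langle\cdot,\div\vec p\rangle_{L^2}=J^{**}$ is the $L^2$-lower semicontinuous hull of $J$ (extended by $+\infty$ off $W^{1,1}$), not $J$ itself.

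With this in place, the rest of your argument goes through:
\begin{itemize}
\item The quadratic fidelity is continuous, so the infimum of \eqref{IRW11} equals the infimum with $J$ replaced by $J^{**}$. Your minimax swap over $d\in L^2$ and the inner minimisation then correctly yield \eqref{IRW11:dual}. This also justifies your otherwise unexplained passage from $d\in W^{1,1}\cap L^2$ to $d\in L^2$.
\item At a primal minimiser $d$ one has $J(d)=J^{**}(d)$, so your pointwise-equality argument for $(-\vec p+\vec\xi)|\grad d|=\grad d$ is valid.
\end{itemize}
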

\begin{proof}
The statement follows by applying Fenchel duality \cite[Remark III.4.2]{ekeland:1999}. The proper, convex and lower semicontinuous functions $\mcF: V \to \overline{\R}$ and $\mcG: W \to \overline{\R}$ are set as
\begin{align*}
\mcF(d) := \frac{\alpha}{2} \|d - d_0\|_{L^2}^2 \qquad \mcG(\grad u):= \int_{\Omega} |\grad u| - \langle \grad u , \vec{\xi}\rangle_{L^2}
\end{align*}
and the linear operator $\grad: V \to W$ with $V=L^2(\Omega,\R)$ and $W = H_0^{\div}(\Omega,\R^2)'$. Note that $\grad:~L^2(\Omega,\R) \to H_0^{\div}(\Omega,\R^2)'$ is bounded, since
\begin{align*}
\|\grad\| 
	&= \sup_{u \in L^2(\Omega,\R) \setminus\{0\}} \frac{\|\grad u \|_{{H_0^{\div}}'}}{\|u\|_{L^2}} 
	= \sup_{u \in L^2(\Omega,\R) \setminus\{0\}}\sup_{\phi \in H_0^{\div}(\Omega,\R^2) \setminus\{0\}}  \frac{|\langle \grad u,
	 \phi\rangle_{{H_0^{\div}}', {H_0^{\div}}}|}{\|u\|_{L^2}\|\phi\|_{H_0^{\div}}} \\
	&= \sup_{u \in L^2(\Omega,\R) \setminus\{0\}}\sup_{\phi \in H_0^{\div}(\Omega,\R^2) \setminus\{0\}}  \frac{|\langle  u,
	 \div\phi\rangle_{L^2}|}{\|u\|_{L^2}\|\phi\|_{H_0^{\div}}}\\
	&\leq \sup_{u \in L^2(\Omega,\R) \setminus\{0\}}\sup_{\phi \in H_0^{\div}(\Omega,\R^2) \setminus\{0\}}  \frac{\| u\|_{L^2} \|
	 \div\phi\|_{L^2}}{\|u\|_{L^2}\|\phi\|_{H_0^{\div}}}
	 \leq 1,	
\end{align*}
 with adjoint $\grad^* = -\div: H_0^{\div}(\Omega,\R^2) \to L^2(\Omega,\R)$. The convex conjugate of $\mcF$ and $\mcG$ can be computed as
 \begin{align*}
 &\mcF^*(u^*) = \frac{1}{\alpha} \|u^* + \alpha d_0\|_{L^2}^2 - \frac{\alpha}{2}\|d_0\|_{L^2}^2\\
 &\mcG^*(\vec{v}^*) = \sup_{\vec{v}\in W} \langle \vec{v},\vec{v}^* \rangle_{W,W'} - \mcG(\vec{v}) = \sup_{\vec{v}\in W} \int_{\Omega} \vec{v} \cdot (\vec{v}^* + \vec{\xi}) - |v| \dx = 
 \begin{cases}
 0 & \text{ if } |\vec{v}^*(x) + \vec{\xi}(x)| \leq 1,\\
 \infty & \text{ otherwise}.
 \end{cases}
 \end{align*}
 Then, according to \cite[Remark III.4.2]{ekeland:1999}, the dual problem is
 \begin{align*}
 \sup_{\vec{p}\in W'} - \mcF^*(-\div \vec{p}) - \mcG^*(-\vec{p}) = - \inf_{\vec{p}\in \tilde{\B}} \|-\div \vec{p} + \alpha d_0\|_{L^2}^2 - \frac{\alpha}{2}\|d_0\|_{L^2}^2
 \end{align*}
 with the optimality conditions $-\div \vec{p} \in \partial \mcF(d)$ and $-\vec{p}\in\partial \mcG(\grad u)$, whereby the first one can be written as
 \begin{equation*}
 -\div \vec{p} = \alpha (d-d_0).
 \end{equation*}
 The second one can be written in a point-wise way as
 \begin{align*}
 -\vec{p} + \vec{\xi} = \frac{\grad d}{|\grad d|} \qquad &\text{if } |\vec{p} - \vec{\xi}| = 1\\
 \grad d = 0 \qquad &\text{if } |\vec{p} - \vec{\xi}| < 1.
\end{align*}  
\end{proof}

\subsection{Alternative Formulation of Step 2 and its Dualization}\label{Sec:AlternativeStep2}

Since \eqref{eq:cTFS} and \eqref{eq:cIRP} do not fit together without any further in-between step, like described above, we consider a modification proposed in \cite{LiRaTa2011}: Let $\vec{\tau}\in L^2(\Omega,\R^2)$ be a solution of Step~1 and solve 
\begin{align}\label{modifiedStep2}
d^*\in \argmin_{d\in BV(\Omega,\R)\cap L^2(\Omega,\R)} TV(d-g) + \frac{\alpha}{2} \|d-d_0\|_{L^2}^2
\end{align}
where $\alpha>0$ and $g$ is such that $\vec{\tau} = (-\frac{\partial g}{\partial y}, \frac{\partial g}{\partial x})$,
i.e.\ $\grad g = \vec{\tau}^\perp$. Assuming that $\Omega$ is a bounded, simple-connected domain with Lipschitz-continuous boundary, which seems quite natural for image domains, the existence of such a $g \in H^1(\Omega,\R)$ is guaranteed and can be constructed with the help of the Fourier transform \cite[Theorem 3.1, p. 37]{GirRav2012}. The optimality condition of \eqref{modifiedStep2} is
\begin{align*}
0\in \partial TV(\cdot - g)(d^*) +  \alpha (d^* - d_0) \quad \Leftrightarrow \quad  \alpha (d_0 - d^*) \in \partial TV(\cdot - g)(d^*). 
\end{align*}
By the definition of the subdifferential this can be rewritten as
\begin{align*}
TV(v - g) \geq TV (d^* - g) + \langle\alpha (d_0 - d^*), v - d^*  \rangle_{V',V} \quad \forall v \in V,
\end{align*}
where $V= BV(\Omega,\R)\cap L^2(\Omega,\R)$. Since $g\in H^1(\Omega,\R) \subset V$ we have
\begin{align*}
TV(\tilde{v}) \geq TV (d^* - g) + \langle\alpha (d_0 - d^*), \tilde{v} + g - d^*  \rangle_{V',V} \quad \forall \tilde{v} \in V
\end{align*}
which means
\begin{align*}
 \alpha (d_0 - d^*) \in \partial TV(\cdot )(d^*-g). 
\end{align*}
Substituting $u^* := d^* - g$ into the latter inclusion yields
\begin{align*}
 \alpha (d_0 - (u^* + g)) \in \partial TV(\cdot )(u^*)
\end{align*}
which is the optimiality condition of 
\begin{align}\label{modifiedStep2_2}
\min_{u\in BV(\Omega,\R)} TV(u) + \frac{\alpha}{2} \|u-(d_0-g)\|_{L^2}^2.
\end{align}
Hence, if $u^*$ solves \eqref{modifiedStep2_2}, then $d^* = u^* + g$ solves \eqref{modifiedStep2}. Note that \eqref{modifiedStep2_2} is the well-known Rudin-Osher-Fatemi-model \cite{ROF} and its dual problem writes as
\begin{align}\label{modifiedStep2_2_dual}
\min_{\vec{p} \in \B(H_0^{\div}(\Omega,\R^2))} \|\div \vec{p} - \alpha(d_0-g)\|_{L^2}^2,
\end{align}
see \cite{HintermullerKunisch:04}, \cref{Cor:irdual} or \cref{Sec:Dualization}. The existence of a solution of \eqref{modifiedStep2_2} and \eqref{modifiedStep2_2_dual} is well-understood, see e.g., \cite{ChambolleLions1997,HintermullerKunisch:04} and \cite{HiLaAl2023} in a more general setting. Moreover, let $d^*$ be a solution of \eqref{modifiedStep2} and $\vec{p}^*$ be a solution of \eqref{modifiedStep2_2_dual}, then
\[
d^* = d_0 -\frac{1}{\alpha} \div \vec{p}^*.
\]
Remark that there exists $\alpha \geq 0$ such that \eqref{modifiedStep2} is equivalent to the constrained problem 
\begin{align*}
\min_{d\in BV(\Omega,\R)} TV(d-g)  \qquad \text{ subject to } \|d-d_0\|_{L^2}^2 \leq \sigma,
\end{align*}
if $0<\sigma \le \|d_0 - \int_\Omega d_0\|_{L^2}^2$ and $d_0 \in X$, which follows directly from \cite{ChambolleLions1997}.

To distinguish between the two formulations of the image reconstruction step within the TV-Stokes framework, we shall refer to the method introduced in this subsection as \textit{Image Reconstruction Variant~2 (IRV2)}, and the one presented in \cref{Sec:Step2} as \textit{Image Reconstruction Variant~1 (IRV1)}.

\section{Discretization}\label{sec:NumImpl}
\subsection{Notations}\label{Sec:Discrete:Notation}
Let $\Omega^h\subset \R^2$ be a discrete image domain consisting of $N_2 \times N_1$ pixels, where $N_1,N_2\in\N$. The pixel centers are denoted by $(\vec{x}_{i,j})_{i=1,...,N_2,~j=1,...,N_1}$, with $\vec{x}_{i,j}=(y_i,x_j)\in\Omega^h$. Here, $i$ and $j$ refers to the row and column indices, respectively. 
We define the discrete coordinate sets $\Omega_x^h=\{x_j\}_{j=1}^{N_1}$ and $\Omega_y^h=\{y_j\}_{j=1}^{N_2}$ as the horizontal and vertical grid points. The pixels are equidistant and the mesh size is given by $h = y_{i}- y_{i-1} = x_{j}- x_{j-1} > 0$ for all $i=2,...,N_2$ and $j=2,...,N_1$.  
Further, we define the extended domain $\tilde{\Omega}^h \subset \R^2$ consisting of $\tilde{N}_2 \times \tilde{N}_1$ points $(\tilde{\vec{x}}_{i,j})_{i=1,...,\tilde{N}_2,~j=1,...,\tilde{N}_1}$ where $\tilde{N}_1 := N_1+1$ and $\tilde{N}_2 := N_2+1$. Similarly as above let $\tilde{\Omega}^h = \tilde{\Omega}_y^h \times \tilde{\Omega}_x^h$ and note that $\Omega^h \subset \tilde{\Omega}^h$. For $\Gamma^h\in \{\Omega^h, \tilde{\Omega}^h\}$ we denote by $\Gamma^h_y $ and  $\Gamma^h_x$ the respective vertical and horizontal grid points such that  $\Gamma^h = \Gamma^h_y \times \Gamma^h_x$. 
Let $A^h$ be a set, then the cardinality (i.e., the number of elements) of the set $A^h$ is denoted by $\# A^h$, for example, $\# \tilde{\Omega}_y^h = N_2+1$.

We approximate functions $\vec{u}$ by a discrete function, denoted by $\vec{u}^h$ and for its evaluation at $\vec{x}_{i,j} \in \Gamma^h\in \{\Omega^h, \tilde{\Omega}^h\}$ we use the shorthand notation  $\vec{u}^h(\vec{x}_{i,j}) = \vec{u}^h_{i,j}$. The considered function spaces are
\begin{align*}
\mcX(\Gamma^h,c):=\{\vec u^h:\Gamma^h\to\R^c\} \quad \text{and} \quad \mcX(\Gamma^h,2\times c):=\{\vec p^h : \Gamma^h\to\R^{2\times c}\}
\end{align*} 
for $c\in\{1,2\}$ and $\Gamma^h\in \{\Omega^h, \tilde{\Omega}^h\}$, where $\mcX(\Gamma^h,2\times 1)$ is identified with $\mcX(\Gamma^h,2)$, with the norms
\[
\|d^h\|_{\mcX(\Omega^h,1)} = \left(h^2 \sum_{i=1}^{N_2}\sum_{j=1}^{N_1} |d^h_{i,j}|^2\right)^{1/2} \qquad \text{for all } d^h\in \mcX(\Omega^h,1),
\]
and 
\[
\|\vec{\tau}^h\|_{\mcX(\tilde\Omega^h,2)} = \left(h^2 \sum_{i=1}^{\tilde{N}_2}\sum_{j=1}^{\tilde{N}_1} |\tau^h_{1,i,j}|^2 + |\tau^h_{2,i,j}|^2\right)^{1/2} \qquad \text{for all } \vec{\tau}^h=(\tau^h_1,\tau^h_2)\in \mcX(\tilde\Omega^h,2),
\]
respectively. For $\Gamma^h\in \{\Omega^h, \tilde{\Omega}^h\}$, we define the scalar product of $u^h,v^h \in \mcX(\Gamma^h,1)$ and $\vec{p}^h=(p^h_1,p^h_2),\vec{q}^h=(q^h_1,q^h_2)\in \mcX(\Gamma^h,2)$, respectively, by
\[
\langle u^h,v^h \rangle_{\mcX(\Gamma^h,1)} = h^2 \sum_{i=1}^{\#\Gamma^h_y}\sum_{j=1}^{\#\Gamma^h_x} u^h_{i,j} v^h_{i,j}, \quad \text{and} \quad \langle \vec{p}^h,\vec{q}^h \rangle_{\mcX(\Gamma^h,2)} = \sum_{k=1}^2 \langle p_k^h, q_k^h \rangle_{\mcX(\Gamma^h,1)}.
\]
For $c\in\{1,2, 2\times 2\}$ and a subset $A^h\subset \Gamma^h$, we define the restriction operator $R_{A^h} : \mathcal{X}(\Gamma^h, c)\to\mathcal{X}(A^h,c)$ as
\begin{align*}
	\left(R_{A^h}\vec{u}^h\right)(x) &:= \vec{u}^h(x), \qquad \text{for all } x\in A^h.
\end{align*}
The forward differences $D_{x,\Omega^h}^+ : \mcX(\Omega^h,1)\to \mcX(\Omega^h,1)$ and $D_{y,\Omega^h}^+ : \mcX(\Omega^h,1)\to \mcX(\Omega^h,1)$ and the backward differences $D_{x,\Omega^h}^- : \mcX(\Omega^h,1)\to \mcX(\Omega^h,1)$ and $D_{y,\Omega^h}^- : \mcX(\Omega^h,1)\to \mcX(\Omega^h,1)$ shall for $i=1,...,N_2$ and $j=1,...,N_1$ be defined as
	\begin{equation*}
	\begin{split}
	(D_{x,\Omega^h}^+ d^h)_{i,j} &:= 
	\begin{cases}
	\frac{1}{h}(d^h_{i,j+1}-d^h_{i,j})& \text{if~} j<N_1, \\
	0 & \text{if~}j=N_1,
	\end{cases} \\
	(D_{y,\Omega^h}^+ d^h)_{i,j} &:=  
	\begin{cases}
	\frac{1}{h}(d^h_{i+1,j}-d^h_{i,j})& \text{if~} i<N_2, \\
	0 & \text{if~}i=N_2,
	\end{cases} \\
	(D_{x,\Omega^h}^- d^h)_{i,j} &:=  
	\begin{cases}
	\frac{1}{h} d^h_{i,j} & \text{if~} j=1, \\
	\frac{1}{h}(d^h_{i,j}-d^h_{i,j-1})& \text{if~} 1<j<N_1, \\
	-\frac{1}{h} d^h_{i,j-1} & \text{if~}j=N_1,
	\end{cases} \\
	(D_{y,\Omega^h}^- d^h)_{i,j} &:=
	\begin{cases}
	\frac{1}{h} d^h_{i,j} & \text{if~} i=1, \\
	\frac{1}{h}(d^h_{i,j}-d^h_{i-1,j})& \text{if~} 1<i<N_2, \\
	-\frac{1}{h} d^h_{i-1,j} & \text{if~}i=N_2.
	\end{cases}
	\end{split}
	\end{equation*}
	Further, we introduce discrete backward differences	with Neumann-boundaries $D_x^{neum-}:\mcX(\Omega^h,1)\to\mcX(\tilde{\Omega}^h,1)$ and $D_y^{neum-}:\mcX(\Omega^h,1)\to\mcX(\tilde{\Omega}^h,1)$ such that
\begin{equation}\label{def:backdiffNeumann}
\begin{split}
(D_x^{neum-}d^h)_{i,j} &:= \frac{1}{h}
\begin{cases}
0,	& j=1, N_1+1, \\
(d_{i,j}-d_{i,j-1}), & j=2,...,N_1, i=1,...,N_2, \\
(d_{i-1,j}-d_{i-1,j-1}), & j=2,...,N_1, i=N_2+1,
\end{cases} \\
(D_y^{neum-}d^h)_{i,j} &:= \frac{1}{h}
\begin{cases}
0,	& i=1, N_2+1, \\
(d_{i,j}-d_{i-1,j}), & i=2,...,N_2, j=1,...,N_1, \\
(d_{i,j-1}-d_{i-1,j-1}), & i=2,...,N_2, j=N_1+1.
\end{cases}
\end{split}
\end{equation}
Accordingly, the discrete gradient $\grad_{\Omega^h}^h: \mcX(\Omega^h,1) \to \mcX(\Omega^h,2)$ and the discrete divergence $\div_{\Omega^h}^h: \mcX(\Omega^h,2) \to \mcX(\Omega^h,1)$ are defined as
\begin{align}
	\grad_{\Omega^h}^h d^h&:=(D^+_{x,\Omega^h} d^h, D^+_{y,\Omega^h} d^h)^T,& \text{for~}d^h\in \mcX(\Omega^h,1), \label{defGradDis}\\
	\div_{\Omega^h}^h \vec{u}^h &= \div^h((u^h_1,u^h_2)^T):=D^-_{x,\Omega^h} u^h_1+ D^-_{y,\Omega^h} u^h_2, &\text{for~}\vec{u}^h\in \mcX(\Omega^h,2), \label{defDivDis}
	\end{align}
	which renders $\div_{\Omega^h}^h$ and $-\grad_{\Omega^h}^h$ adjoint to each other, i.e., $\div_{\Omega^h}^h = -(\grad_{\Omega^h}^h)^*$. 
Analogously, we define the discrete multi-gradient $\vecgrad_{\Omega^h}^{\;h}: \mcX(\Omega^h,2\times 1) \to \mcX(\Omega^h,2\times 2)$ and the discrete multi-divergence $\vecdiv_{\Omega^h}^h: \mcX(\Omega^h,2\times 2) \to \mcX(\Omega^h,2\times 1)$ as
\begin{align*}
\vecgrad_{\Omega^h}^{\;h} \vec{u}^h&:=(\grad^h_{\Omega^h} u_1^h, \grad^h_{\Omega^h} u_2^h)^T,& \text{for~}\vec{u}^h\in \mcX(\Omega^h,2\times 1), \\
\vecdiv_{\Omega^h}^h \vec{p}^h &= \vecdiv_{\Omega^h}^h((\vec{p_1}^h,\vec{p_2}^h)^T):=(\div^h_{\Omega^h}\vec{p_1}^h,\div^h_{\Omega^h}\vec{p_2}^h)^T, &\text{for~}\vec{p}^h\in \mcX(\Omega^h,2\times 2).
\end{align*}
Again, the identity $\vecdiv_{\Omega^h}^h = -(\vecgrad_{\Omega^h}^{\;h})^*$ follows directly from the above definitions of the discrete operators.
Moreover, we define the discrete Laplacian as $\Delta_{\Omega^h}^h: \mcX(\Omega^h,1) \to \mcX(\Omega^h,1)$ with $\Delta_{\Omega^h}^h = \div_{\Omega^h}^h \grad_{\Omega^h}^h$ and denote its Moore-Penrose inverse as $(\Delta_{\Omega^h}^h)^\dagger$.

The discrete analog of the orthogonal projection $\mcP[K]$ is denoted by $\mcPh[K^h]$, where $K^h:=\Null(\div_{\tilde{\Omega}^h}^h) = \{\vec{\tau}^h \in \mcX(\tilde{\Omega}^h,2) \colon \div_{\tilde{\Omega}^h}^h \vec{\tau}^h = 0\}$. In particular, one can show that $\mcPh[K^h]: \mcX(\tilde{\Omega}^h,2) \to K^h$ with
\begin{align}\label{formula:PKh}
\mcPh[K^h] = I^h - \grad_{\tilde{\Omega}^h}^h (\Delta_{\tilde{\Omega}^h}^h)^\dagger \div_{\tilde{\Omega}^h}^h, 
\end{align}
is indeed the unique orthogonal projection onto $K^h$, see \cref{sec:disorthproj}. Here, $I^h$ denotes the discrete identity operator, i.e., $I^h \vec{\tau}^h = \vec{\tau}^h$ for $\vec{\tau}^h\in \mcX(\tilde{\Omega}^h,2)$. 
In order to make the computation of the projection $\mcPh[K^h]$ efficient, we express the discrete Laplacian and its Moore-Penrose inverse in terms of the Discrete Cosine Transform (DCT). By the same considerations as in \cite{dualtvstokes:2009} we obtain that for $d^h\in \mcX(\tilde{\Omega}^h,1)$
\begin{align}\label{form:DiscInvLapl}
				(\Delta_{{\tilde\Omega}^h}^h)^\dagger d^h = (\mathcal{C}^h_{\tilde{\Omega}^h})^{-1} (\tilde{\Delta}^h_{\tilde{\Omega}^h})^\dagger \mathcal{C}^h_{\tilde{\Omega}^h} d^h,
\end{align}
where $(\tilde{\Delta}^h_{\tilde{\Omega}^h})^\dagger : \mcX(\tilde{\Omega}^h,1) \to \mcX(\tilde{\Omega}^h,1)$ is given by
\begin{align}\label{form:DiscInvLaplTild}
				((\tilde{\Delta}^h_{\tilde{\Omega}^h})^\dagger {d}^h)_{i,j} =
				\begin{cases}
				0 & \text{for~} i=j=1, \\
				-\frac{1}{\sigma_{\tilde{N}_{2},j}^2} {d}^h_{1,j} & \text{for~} i=1, j\neq 1, \\
				-\frac{1}{\sigma_{\tilde{N}_{1},i}^2} {d}^h_{i,1} & \text{for~} j=1, i\neq 1, \\
				-\frac{1}{\sigma_{\tilde{N}_{1},i}^2+\sigma_{\tilde{N}_{2},j}^2} {d}^h_{i,j}~~~~~~~~~ & \text{for~} i\neq 1, j\neq 1
				\end{cases}
\end{align}
with $\sigma_{N,j} = \frac{2}{h}\sin\left(\frac{(j-1)\pi}{2 N}\right)$~for $N\in\{\tilde{N}_1,\tilde{N}_2\}$ and 
$\mathcal{C}^h_{\tilde{\Omega}^h} : \mcX(\tilde{\Omega}^h,1)\to \mcX(\tilde{\Omega}^h,1)$ is the 2D DCT. The 2d DCT $\mathcal{C}^h_{\tilde{\Omega}^h}$ can be further represented as
\begin{align}\label{eq:DCT2D}
\mathcal{C}^h_{\tilde{\Omega}^h} d^h &= C_{\tilde{N}_2}~ d^h~ C_{\tilde{N}_1}^T \qquad \text{for any } d^h \in \mcX(\tilde{\Omega}^h,1),
\end{align}
where $C_{\tilde{N}_2}\in\R^{\tilde{N}_2\times \tilde{N}_2}$, $C_{\tilde{N}_1}\in\R^{\tilde{N}_1\times \tilde{N}_1}$ and
\begin{align} 
C_N &:=\sqrt{\frac{2}{N}}\begin{pmatrix}
1/\sqrt{2} & 1/\sqrt{2} & \hdotsfor{1} & 1/\sqrt{2} \\
\cos\big(\frac{\pi}{2 N}\big) & \cos\big(\frac{3\pi}{2 N}\big) & \hdotsfor{1} & 
\cos\Big(\frac{(2N - 1)\pi}{2 N}\Big) \\
\cos\big(\frac{2\pi}{2N}\big) & \cos\big(\frac{6\pi}{2N}\big) & \hdotsfor{1} & 
\cos\Big(\frac{2(2N-1)\pi}{2N}\Big) \\
\cos\big(\frac{3\pi}{2N}\big) & \cos\big(\frac{9\pi}{2N}\big) & \hdotsfor{1} & 
\cos\Big(\frac{3(2N-1)\pi}{2N}\Big) \\
\vdots & \vdots & \ddots & \vdots \\
\cos\Big(\frac{(N-1)\pi}{2N}\Big) & \cos\Big(\frac{(N-1)3\pi }{2N}\Big) & \hdotsfor{1} & 
\cos\Big(\frac{(N-1)(2N-1)\pi}{2N}\Big)
\end{pmatrix}\in\R^{N\times N}
\label{form:DCTmat}
\end{align}
for $N\in\{\tilde{N}_1,\tilde{N}_2\}$. 
Note that the orthogonality of $C_N$ gives us
\begin{align}\label{eq:DCT2Dinv}
(\mathcal{C}^h_{\tilde\Omega^h})^{-1}d^h = C_{\tilde{N}_2}^T~ d^h~ C_{\tilde{N}_1}
\end{align}
for all $d^h\in \mcX(\tilde\Omega^h,1)$.

Further, we define for a set $\mathcal{S}\subseteq \mcX(\Gamma^h,2\times c)$, $c=1,2$, $\Gamma^h\in\{\Omega^h, \tilde{\Omega}^h\}$, of vector-valued discrete images a discrete analogue to \eqref{eq:DefB} as 
\begin{align}\label{eq:DefDisB}
\B^h(\mathcal{S})&:=\{\vec{p}^h\in \mathcal{S} \ : \ |(\vec{p}^h_{k})_{i,j}|\leq 1 \ \text{ for all } \ i=1,...,\#\Gamma_y^h,\; j=1,...,\#\Gamma_x^h \; \text{and} \; k=1,...,c\} 
\end{align}

\subsection{Iteration for Discrete Dual Problems}

Note that all the dual formulations of the subproblems, i.e., \eqref{eq:tfs_dualPK}, \eqref{eq:IRPXi_dual} and \eqref{modifiedStep2_2_dual}, are of the the form
\begin{align}\label{eq:opt_dual}
\min\limits_{\vec{p}\in\B(H_0^{\div}(\Omega,\R^{2\times c}))} \left\{\mcD(\vec{p}) := \left\|\Lambda \vec{p}- \vec{f} \right\|_{L^2}^2\right\}, 
\end{align}
where $\vec{f}\in L^2(\Omega,\R^c)$, $\Lambda : H_0^{\div}(\Omega,\R^{2\times c}) \to L^2(\Omega,\R^c)$ is a bounded linear operator and $c\in\{1,2\}$.
In particular, for $c=2$, with $\Lambda = \mcP[K]\div$ and $\vec{f}=\delta^{-1}\mathcal{P}_K\vec{\tau}_0$ the problem in \eqref{eq:opt_dual} reduces to the dual problem \eqref{eq:tfs_dualPK} corresponding to Step 1 (TFS) of the TV-Stokes model. Since $\mcP[K]$ is an orthogonal projection and thus satisfying $\|\mcP[K]\|=1$ \cite[Proposition 8.4]{hunter2001applied} and $\div$ is bounded as well, see \eqref{eq:div_bounded}, the composition $\mcP[K]\div$ is indeed a bounded linear operator. 
For $c=1$, with $\Lambda = \div$, problem \eqref{eq:opt_dual} becomes the dual problem \eqref{eq:IRPXi_dual} of Step 2 (image reconstruction) in the TV-Stokes model when $\vec{f}=\alpha d_0 - \div\vec{\xi}$, and becomes \eqref{modifiedStep2_2_dual} when $\vec{f}=\alpha(d_0 - g)$.

With the notations of the previous subsection, we define the discrete analogue of \eqref{eq:opt_dual} as $\mcD^h : \mcX(\Gamma^h,2\times c)\to\R$ with $c\in\{1,2\}$ and $\Gamma^h$ being a discretization of $\Omega$, and write
\begin{align}\label{eq:opt_dual_discrete}
\min\limits_{\vec{p}^h\in\B^h(\mcX(\Gamma^h,2\times c))} \left\{\mcD^h(\vec{p}^h):= \left\|\Lambda^h\vec{p}^h-\vec{f}^h\right\|^2_{\mcX(\Gamma^h,c)} \right\},
\end{align}
where $\Lambda^h: \mcX(\Gamma^h,2\times c) \to \mcX(\Gamma^h,c)$ and $\vec{f}^h\in \mcX(\Gamma^h,c)$ are the discrete counterparts of $\Lambda$ and $\vec{f}$, respectively.
More precisely, given the discrete image $d_0^h\in \mcX(\Omega^h,1)$ the discrete dual problems for Step~1 and 2 of the TV-Stokes model read as follows:
\begin{align}
&\min\limits_{\vec{p}^h\in\B^h(\mcX(\tilde{\Omega}^h,2\times 2))} \left\{ \mcD^h_{\mathrm{TFS}}(\vec{p}^h) := \left\| \mcP[K^h]^h \vecdiv^h_{\tilde{\Omega}^h} \vec{p}^h-\delta^{-1} \mcP[K^h]^h\vec{\tau}^h_0\right\|_{\mcX(\tilde{\Omega}^h,2)}^2\right\} &\text{(TFS)}\label{eq:tfsdualdis}
\\
&\min\limits_{\vec p^h\in\B^h(\mcX(\Omega^h,2\times 1))} \left\{ \mcD^h_{\mathrm{IRV1}}(\vec{p}^h) :=\left\| \div^h \vec{p}^h-(\alpha d_0^h -  \div^h \vec{\xi}^h)\right\|_{\mcX(\Omega^h,1)}^2\right\} &\text{(IRV1)}\label{eq:ir1dualdis}
\\
&\min_{\vec{p}^h \in \B^h(\mcX(\Omega^h,2\times 1))} \left\{ \mcD^h_{\mathrm{IRV2}}(\vec{p}^h) := \left\|\div^h \vec{p}^h - \alpha(d_0^h-g^h)\right\|_{\mcX(\Omega^h,1)}^2\right\} &\text{(IRV2)}\label{eq:ir2dualdis}
\end{align}
where $\vec{\tau}_0^h = (-D_y^{neum-} d^h_0, D^{neum-}_x d^h_0)^T\in\mcX(\tilde{\Omega}^h,2)$, $\vec{\xi}^h\in\mcX(\Omega^h,2)$ a discretization of $\vec{\xi}$ and $g^h\in\mcX(\Omega^h,1)$ such that $(D^-_{x,\Omega^h},D^-_{y,\Omega^h})^T g^h = \left(R_{\Omega^h}\vec{\tau}^h\right)^\perp$ with $\vec{\tau}^h = \mcP[K^h]^h\vec{\tau}_0^h-\delta\mcP[K^h]^h \vecdiv^h_{\tilde{\Omega}^h} \ph\in\mcX(\tilde{\Omega}^h,2)$ and $\ph\in\mcX(\tilde{\Omega}^h,2\times 2)$ a solution of \eqref{eq:tfsdualdis}.
In particular, following (\ref{eq:defXi2}), we compute $\vec{\xi}^h\in\mcX(\Omega^h,2)$ from $\vec\tau^h\in\mcX(\tilde\Omega^h,2)$ by setting $\vec\tau^{h,\perp}_{\Omega^h}:=R_{\Omega^h}(\vec\tau^h_2,-\vec\tau^h_1)^T$ and
\begin{align}\label{eq:defXi2:discrete}
\vec{\xi}^h_{i,j} := \frac{\big(\vec\tau^{h,\perp}_{\Omega^h}\big)_{i,j}}{\big|\big(\vec\tau^{h,\perp}_{\Omega^h}\big)_{i,j}\big|_\epsilon} := \frac{\big(\vec\tau^{h,\perp}_{\Omega^h}\big)_{i,j}}{\sqrt{\big(\vec\tau^{h,\perp}_{\Omega^h,1}\big)_{i,j}^2 + \big(\vec\tau^{h,\perp}_{\Omega^h,2}\big)_{i,j}^2 + \epsilon}}
\end{align}
for $\epsilon>0$. 
Note that due to our definitions of the discrete gradient and divergence operators we cannot guarantee that $\vec{\tau}_0^h\in K^h$ in general. Hence also in our discrete setting the projection of $\vec{\tau}_0^h$ onto $K^h$ is needed, see \eqref{eq:tfsdualdis}.
Further, it is crucial to solve \eqref{eq:tfsdualdis} on the extended domain $\tilde{\Omega}^h$ as this allows to keep the boundaries outside the image domain, see \cref{figDualcoordinates}. In particular, this avoids artifacts (on the boundary) in the reconstructed image when using the restricted computed tangent field for solving \eqref{eq:ir1dualdis} and \eqref{eq:ir2dualdis}.
On top of that, specifically for IRV2, it is important to use backward differences to determine $\vec\tau_0^h$ from $d_0^h$ to make sure that the constructed $g^h$ will be in the same coordinate system as $d_0^h$ and $d^h$ (compare Figure \ref{figDualcoordinates}). Note also that such a specific $g^h$ can only be constructed, since $\div_{\tilde\Omega^h}^h\vec{\tau}^h=D^-_{x,\tilde\Omega^h}\tau_1^h+D^-_{y,\tilde\Omega^h}\tau_2^h=0$.  In our setting, if $\div_{\tilde\Omega^h}^h$ were defined by the forward-difference scheme introduced above, such $g^h$ may not exist.
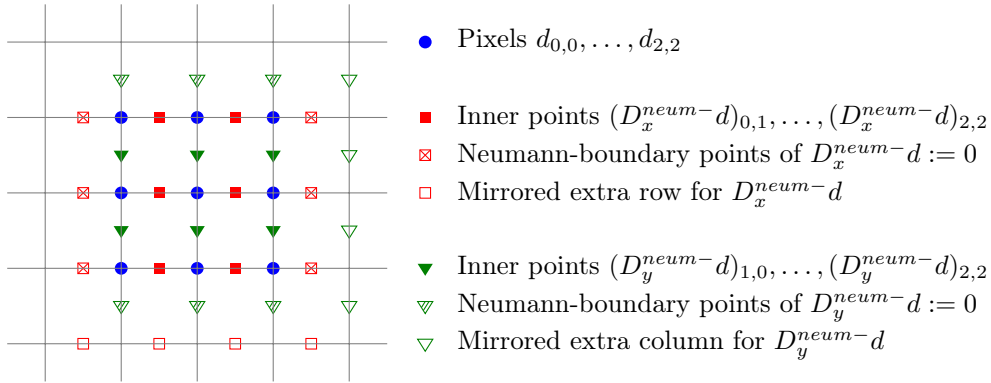
\begin{figure}[htbp]
	\centering
	\begin{tikzpicture}
	\definecolor{darkgreen}{rgb}{0.0, 0.5, 0.0}
	
	\def\xmax{5}
	\def\ymax{5}
	
	\foreach \x in {2,3,4} {
		\foreach \y in {2,3,4} {
			\fill[blue] (\x, \y) circle (2.4pt); 
		}
	}
	
	\foreach \y in {2,3,4} {
		\foreach \x in {2.5,3.5} {
			\fill[red] (\x-0.07, \y-0.07) rectangle ++(0.14,0.14);
		}
		\foreach \x in {1.5,4.5} {
			\draw[red] (\x-0.07, \y-0.07) rectangle 			++(0.14,0.14);
			\draw[red] (\x-0.07,\y-0.07) -- ++(0.14,0.14);
			\draw[red] (\x+0.07,\y-0.07) -- ++(-0.14,0.14);
		}
	}
	\foreach \x in {1.5,2.5,3.5,4.5}{
		\draw[red] (\x-0.07, 1-0.07) rectangle ++(0.14,0.14);
	}
	
	\foreach \x in {2,3,4} {
		\foreach \y in {2.5,3.5} {
			\fill[darkgreen] (\x-0.1, \y+0.06) -- ++(0.1,-0.16) -- ++(0.1,0.16) -- cycle; 
		}
		\foreach \y in {1.5,4.5} {
			\draw[darkgreen] (\x-0.1, \y+0.06) -- ++(0.1,-0.16) -- ++(0.1,0.16) -- cycle; 
			\draw[darkgreen] (\x+0.05,\y+0.06) -- ++(-0.075,-0.12);
			\draw[darkgreen] (\x-0.00,\y+0.06) -- ++(-0.05,-0.08);
		}	
	}
	\foreach \y in {1.5,2.5,3.5,4.5} {
		\draw[darkgreen] (5-0.1, \y+0.06) -- ++(0.1,-0.16) -- ++(0.1,0.16) -- cycle; 
	}
	\draw[gray, thin] (0.5,0.5) grid (\xmax+0.5,\ymax+0.5);
	\begin{scope}[xshift=6cm]
	\fill[blue] (0,5) circle (2.4pt);
	\node[right] at (0.3,5) {Pixels $d_{0,0},\ldots,d_{2,2}$};
	\fill[red] (0-0.07, 4.0-0.07) rectangle ++(0.14,0.14);
	\node[right] at (0.3,4.0) {Inner points $(D_x^{neum-}d)_{0,1},\ldots,(D_x^{neum-}d)_{2,2}$};
	\draw[red] (0-0.07, 3.5-0.07) rectangle ++(0.14,0.14);
	\draw[red] (-0.07,3.5-0.07) -- ++(0.14,0.14);
	\draw[red] (-0.07+0.14,3.5-0.07) -- ++(-0.14,0.14);
	\node[right] at (0.3,3.5) {Neumann-boundary points of $D_x^{neum-}d:=0$};
	\draw[red] (0-0.07, 3.0-0.07) rectangle ++(0.14,0.14);
	\node[right] at (0.3,3.0) {Mirrored extra row for $D_x^{neum-}d$};
	
	\fill[darkgreen] (-0.1, 2.0+0.06) -- ++(0.1,-0.16) -- ++(0.1,0.16) -- cycle; 
	\node[right] at (0.3,2.0) {Inner points $(D_y^{neum-}d)_{1,0},\ldots,(D_y^{neum-}d)_{2,2}$};
	\draw[darkgreen] (0-0.1,1.5+0.06) -- ++(0.1,-0.16) -- ++(0.1,0.16) -- cycle;
	\draw[darkgreen] (0+0.05,1.5+0.06) -- ++(-0.075,-0.12);
	\draw[darkgreen] (0-0.00,1.5+0.06) -- ++(-0.05,-0.08);
	\node[right] at (0.3,1.5) {Neumann-boundary points of $D_y^{neum-}d:=0$};
	\draw[darkgreen] (-0.1, 1.0+0.06) -- ++(0.1,-0.16) -- ++(0.1,0.16) -- cycle; 
	\node[right] at (0.3,1.0) {Mirrored extra column for $D_y^{neum-}d$};
	\end{scope}
	\end{tikzpicture}
	\caption{Primal and dual coordinate systems for $N_2=N_1=3$.
	}\label{figDualcoordinates}
\end{figure}

According to \cite{dualtvstokes:2009, HilbLanger2022} the discrete problem \eqref{eq:opt_dual_discrete} and consequently \eqref{eq:tfsdualdis}-\eqref{eq:ir2dualdis} can be solved with the semi-implicit algorithm presented in \cite{Chambolle:2004}, which we denote by Chambolle's algorithm in the sequel. While for \eqref{eq:ir1dualdis} and \eqref{eq:ir2dualdis} Chambolle's algorithm can be directly applied and its convergence is guaranteed due to \cite[Theorem~3.1]{Chambolle:2004}, a slight modification of the algorithm, see \cref{alg:chambTfs} for the modified variant, is needed to handle \eqref{eq:tfsdualdis} due to the present of the multi-divergence operator and the projection $\mathcal{P}^h_{K^h}$. Similarly as for the algorithm in \cite{Chambolle:2004} the convergence of \cref{alg:chambTfs} can be guaranteed.

\begin{algorithm}[htbp]
	\newcommand{\Break}{\State \textbf{break} }
	\caption{Chambolle's algorithm for Dual Tangent Field Smoothing}\label{alg:chambTfs}
	\begin{algorithmic}[1]
		\Require $\vec{\tau}^h_0 \in \mcX(\tilde{\Omega}^h,2)$, $\delta > 0$, and $t\in(0,\frac{1}{8}]$
		\State $\vec{p}^{h,0}=(\vec{p}_1^{h,0},\vec{p}_2^{h,0})  \gets 0\in \mcX(\tilde{\Omega}^h,2\times 2)$
		\State{$\vec{\psi}^{h,0} = (\vec{\psi}_1^{h,0},\vec{\psi}_2^{h,0})\gets 0\in \mcX(\tilde{\Omega}^h,2\times 2) $}
		\For{$n = 0,1,2,...,max\_it$}
		\State $\vec{\psi}^{h,n} = (\vec{\psi}_1^{h,n},\vec{\psi}_2^{h,n}) \gets \vecgrad^{\;h}_{\tilde{\Omega}^h}(\mathcal{P}^h_{K^h}\vecdiv^h_{\tilde{\Omega}^h} \vec{p}^{h,n} -\delta^{-1}\vec{\tau}^h_0)$
		\State
		$\vec{p}^{h,n+1} = (\vec{p}_1^{h,n+1}, \vec{p}_2^{h,n+1}) \gets \left(\dfrac{\vec{p}_1^{h,n}+t\vec{\psi}_1^{h,n}}{1+t\left|\vec{\psi}_1^{h,n}\right|}, \dfrac{\vec{p}_2^{h,n}+t\vec{\psi_2}^{h,n}}{1+t\left|\vec{\psi}_2^{h,n}\right|} \right) $		
		\If{stop\_criteria}
		\Break
		\EndIf
		\EndFor
	\end{algorithmic}
\end{algorithm}
\begin{theorem}[Convergence of Chambolle's algorithm for Tangent Field Smoothing]
	Let $0<t\leq \frac{1}{8}$. Then Algorithm \ref{alg:chambTfs} generates a sequence $\left(\vec{p}^{h,n}\right)_n \subset\mcX(\tilde{\Omega}^h,2\times 2)$ such that 
	\[\lim\limits_{n\to\infty} \mcP[K^h]^h \vecdiv^h_{\tilde{\Omega}^h} \vec{p}^{h,n} =\mathcal{P}^h_{K^h} \vecdiv^h_{\tilde{\Omega}^h} \vec{p}^{h,*},\]
	where $\vec{p}^{h,*}$ is a minimizer of (\ref{eq:tfsdualdis}).
\end{theorem}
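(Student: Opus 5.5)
We adapt the proof of \cite[Theorem~3.1]{Chambolle:2004}. Throughout, write $\Lambda^h:=\mcPh[K^h]\vecdiv^h_{\tilde{\Omega}^h}$ and $\vec{f}^h:=\delta^{-1}\vec{\tau}^h_0$. Since $\mcPh[K^h]$ is an orthogonal projection (see \eqref{formula:PKh}), it is self-adjoint and idempotent. Together with $\vecdiv^h = -(\vecgrad^{\;h})^*$, this gives $(\Lambda^h)^* = -\vecgrad^{\;h}_{\tilde{\Omega}^h}\mcPh[K^h]$.

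\textbf{Step 1: the iteration is a projected gradient-type step.} For $\vec{p}^{h,n}$ we have $\Lambda^h\vec{p}^{h,n}\in K^h$, and $\mcPh[K^h]\vec{f}^h$ differs from $\vec{f}^h$ only by an element of $(K^h)^\perp=\range(\grad^h)$. Hence $\vec{\psi}^{h,n}=-(\Lambda^h)^*(\Lambda^h\vec{p}^{h,n}-\mcPh[K^h]\vec{f}^h)$ up to the term $\vecgrad^{\;h}(I-\mcPh[K^h])\vec{f}^h$.

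I expect that correction term to be the main obstacle. It must either vanish or be shown harmless. The natural route is to show that the algorithm is exactly the Chambolle scheme for \eqref{eq:tfsdualdis}. For this I would verify that $\vecgrad^{\;h}$ applied to the gradient part $(I-\mcPh[K^h])\vec{f}^h$ is absorbed. Failing that, I would simply interpret line 4 with $\mcPh[K^h]\vec{\tau}_0^h$, as in \eqref{eq:tfsdualdis}.

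With this identification, $\vec{\psi}^{h,n}$ is $-\tfrac12\nabla\mcD^h_{\mathrm{TFS}}(\vec{p}^{h,n})$. The update is Chambolle's semi-implicit step, applied columnwise to $\vec{p}_1,\vec{p}_2$. It keeps $|\vec{p}_k^{h,n+1}|\le1$ pointwise by induction.

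\textbf{Step 2: monotone decrease.} Following Chambolle, write $\vec{p}^{h,n+1}=\vec{p}^{h,n}+t\vec{\eta}$ with $\vec{\eta}=\vec{\psi}^{h,n}-|\vec{\psi}^{h,n}|\vec{p}^{h,n+1}$ componentwise. Expanding $\|\Lambda^h\vec{p}^{h,n+1}-\mcPh[K^h]\vec{f}^h\|^2$ gives
\[
\mcD^h(\vec{p}^{h,n+1})-\mcD^h(\vec{p}^{h,n})\le -t\sum\bigl(2\vec{\eta}\cdot\vec{\psi}^{h,n}-t\|\Lambda^h\|^2|\vec{\eta}|^2\bigr).
\]
We use $\|\Lambda^h\|\le\|\mcPh[K^h]\|\,\|\vecdiv^h\|\le\|\vecdiv^h\|$, and the bound $\|\div^h\|^2\le 8/h^2$ (with the paper's scaling, $t\le 1/8$ in the normalized setting). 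The pointwise inequality $2\vec{\eta}\cdot\vec{\psi}\ge|\vec{\eta}|^2+(|\vec{\psi}|^2-|\vec{\psi}|^2|\vec{p}^{n+1}|^2)\ge|\vec{\eta}|^2$ then yields $\mcD^h(\vec{p}^{h,n+1})\le\mcD^h(\vec{p}^{h,n})$. Equality holds iff $\vec{p}^{h,n+1}=\vec{p}^{h,n}$.

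\textbf{Step 3: limit points are minimizers.} The iterates lie in the compact set $\B^h$. Take a convergent subsequence $\vec{p}^{h,n_k}\to\bar{\vec{p}}$. Then $\vec{p}^{h,n_k+1}\to\bar{\vec{p}}'$, the image of $\bar{\vec{p}}$ under the continuous update map. Since $\mcD^h(\vec{p}^{h,n})$ is monotone, $\mcD^h(\bar{\vec{p}})=\mcD^h(\bar{\vec{p}}')$, so $\bar{\vec{p}}=\bar{\vec{p}}'$ by Step 2. The fixed point equation $\bar{\vec{\psi}}=|\bar{\vec{\psi}}|\bar{\vec{p}}$ is exactly the KKT system of the convex problem \eqref{eq:tfsdualdis} with pointwise constraints $|\vec{p}_k|\le1$, with multipliers $|\bar{\vec{\psi}}_k|$. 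Hence $\bar{\vec{p}}$ is a minimizer.

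\textbf{Step 4: conclude.} $\mcD^h$ is strictly convex in $\Lambda^h\vec{p}$, so $\Lambda^h\vec{p}$ is the same for all minimizers. Therefore every subsequence of $\Lambda^h\vec{p}^{h,n}$ has a further subsequence converging to $\Lambda^h\vec{p}^{h,*}$, and the whole sequence converges.
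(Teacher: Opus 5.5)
Your argument is the paper's proof written out in full. The paper only observes $\|\mcPh[K^h]\|=1$, hence $\|\mcPh[K^h]\vecdiv^h_{\tilde\Omega^h}\|^2\le\|\vecdiv^h_{\tilde\Omega^h}\|^2\le 8$, and then defers to the proof of Chambolle's Theorem~3.1, which is exactly your Steps~2--4.

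In Step~1, however, drop your first option; it cannot work:
\begin{itemize}
\item $\vecgrad^{\;h}_{\tilde\Omega^h}$ annihilates only constant fields.
\item A nonzero discrete gradient field is never constant, because the forward differences vanish on the last row and column.
\item Hence $\vecgrad^{\;h}_{\tilde\Omega^h}(I^h-\mcPh[K^h])\vec\tau_0^h\neq 0$ whenever $\vec\tau_0^h\notin K^h$, and the paper notes this is the generic case.
\end{itemize}
So line~4 of \cref{alg:chambTfs} as printed is Chambolle's iteration for a functional that differs from $\mcD^h_{\mathrm{TFS}}$ by a nontrivial linear term in $\vec p^h$. Your second option, reading line~4 with $\mcPh[K^h]\vec\tau_0^h$ as in \eqref{eq:tfsdualdis} and in the $\vec f^h$ of \cref{alg:tfsparinnertfs}, is therefore necessary rather than a fallback. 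The paper's one-line proof tacitly makes the same identification.
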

\begin{proof}
Since $\mcP[K^h]^h$ is an orthogonal projection, see \cref{prop:DisProj}, we have $\|\mcP[K^h]^h\|^2 = 1$, where
\begin{align*}
\|\mcP[K^h]^h\|:=\sup\limits_{\|\vec v\|_{\mcX(\tilde\Omega^h,2)}=1}\|\mcP[K^h]^h\vec v\|_{\mcX(\tilde\Omega^h,2)} 
\end{align*}
denotes the induced operator norm. Hence $\|\mcP[K^h]^h \vecdiv^h_{\tilde{\Omega}^h}\|^2 \leq \|\vecdiv^h_{\tilde{\Omega}^h}\|^2 \leq 8$. The rest follows along the lines of the proof of \cite[Theorem 3.1]{Chambolle:2004}. 
\end{proof}

\section{Comparison of IRV1 and IRV2}\label{sec:ComparisonIR}
In this section we revisit the two different approaches on how to solve Step 2 of the TV-Stokes model, i.e., the image reconstruction step, and compare them.

\subsection{Analytic Comparison}
\begin{itemize}
\item \textbf{IRV1:} From a functional-analytic perspective we have observed in \cref{Sec:Step2} that \eqref{eq:cTFS} and \eqref{eq:cIRP} do not fit together and a modification step is required leading to \eqref{IRPXi_ROF}. Hence, in this variant the solution of \eqref{eq:cTFS} cannot be directly used and needs to be modified, for example as discussed in \cref{Sec:Step2}. 

If we assume that $d$ is sufficiently smooth and $\vec\xi= \frac{\vec\tau^\perp}{|\vec\tau^\perp|_{\epsilon}}$ for a sufficiently small $\epsilon>0$, then we can make similar considerations as in \cite[(8)]{LysOshTai2004} for the first two terms of the functional in \eqref{IRPXi}. More precisely, we have
\begin{align*}
	TV(d) + \left\langle d, \div\vec\xi\right\rangle_{L^2}
	&=\int_{\Omega}|\grad d|\dx-\int_{\Omega}\grad d\cdot\vec\xi\dx\\
	&=\int_{\Omega}|\grad d|\dx-\int_{\Omega}|\grad d|   |\xi|\cos\angle(\grad d,\vec\xi)\dx\\
	&= \int_{\Omega}|\grad d|\left(1-\underbrace{|\vec\xi|}_{\approx 1}\cos\angle(\grad d,\vec\tau^\perp)\right)\dx.
	\end{align*}
When minimizing \eqref{IRPXi}, one enforces that $\grad d$ is parallel to the normal field $\vec\xi$ and that $|\grad d|$ is small, since $|\vec\xi| < 1$ a.e.\ in $\Omega$. The latter produces a smoothing effect, which is independent of the tangent field $\vec{\tau}$ computed in Step 1. So even if the determined tangent field is very noisy, the result $d$ can be very smooth since $|\grad d|$ gets forced to be small. This makes IRV1 robust to noise.

\item \textbf{IRV2:} This variant presented in \cref{Sec:AlternativeStep2} suggests to solve \eqref{modifiedStep2_2}, which is chosen such that the solution of Step 1, i.e., \eqref{eq:cTFS}, can be directly used without any modifications. 

Here, we minimize
	\begin{align*}
	TV(d-g)+\frac{\alpha}{2}\|d-d_0\|_{L^2}^2.
	\end{align*}
This means that for $\alpha\to 0$ the minimum would tend to $d=g$. On the other hand, as $\alpha\to\infty$, the solution tends towards $d=d_0$, which corresponds to the noisy observation. Since $g$ is the result of TFS, it should ideally be less noisy than $d_0$. Therefore, regardless of the choice of $\alpha$, the reconstructed image $d$ can only be as denoised as $g$. This implies that a poor choice of the regularization parameter $\delta$ in the TFS step may result in a still noisy $g$, and consequently a noisy $d$. In this sense, the effectiveness of the reconstruction step directly depends on the success of TFS.	
\end{itemize}

Note that in practice both variants rely on a numerical solution of Step~1. Since this approximate solution enters the formulation of Step~2 in both variants, the numerical errors propagate from Step~1 to Step~2 and may accumulate, thereby affecting the reconstruction. In IRV1, small errors in $\tau$ can locally lead to large errors in $\xi$, particularly if $\tau\approx 0$ and $\epsilon$ is small, which can locally amplify the error in the final result of Step~2. In IRV2, small global errors in $\tau$ may accumulate when computing $g$ from $\tau$, potentially leading to large deviations in $g$. Nevertheless, we observed that the reconstructions obtained in practice seem to remain reasonable. These effects must be taken into account when comparing and evaluating the quality of different methods for solving the steps in the TV-Stokes model. We will explicitly consider them when assessing our domain decomposition approach later in the paper; see \cref{sec:DDNumVal} and \cref{fig:dd_energy} below.

\subsection{Numerical Experiments}

In the following we numerically compare the performance of IRV1 and IRV2 within the TV-Stokes model. To this end, we evaluate SNR, PSNR, and MSSIM metrics across a set of 12 test images, comprising 2 phantom images (see \cref{fig:phantomimg}) and 10 real-world images (see \cref{fig:realworldimg}).
\begin{figure}[htb]
	\renewcommand{\arraystretch}{1.2}
		\centering
		\normalsize
		\begin{tabular}{|c|c|}
			\hline
			\textbf{ID} & \textbf{Phantom test images}   \\\hline
			00 and 01 & \parbox[c][5.5cm][c]{0.65\textwidth}{
			\includegraphics[width=5cm]{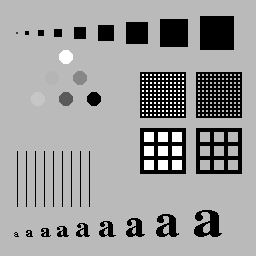}
			\includegraphics[width=5cm]{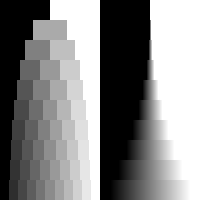}
			}
			\\\hline
		\end{tabular}
	\caption{Phantom images used for tests. Image 00 (left) and Image 01 (right).}\label{fig:phantomimg}
\end{figure}
\begin{figure}[htbp]
		\centering
		\normalsize
		\begin{tabular}{|c|c|}
			\hline
			\textbf{ID} & \textbf{Test images} (270x480)
			\\\hline
			10 and 11 & \parbox[c][3cm][c]{0.65\textwidth}{
			\includegraphics[width=5cm]{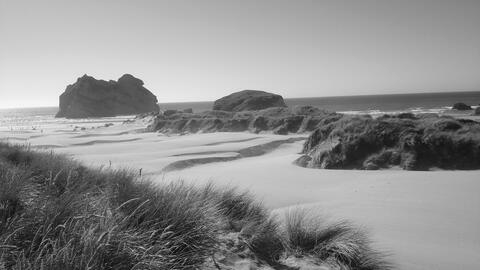}
			\includegraphics[width=5cm]{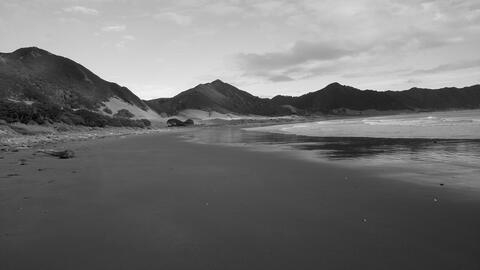}
			}
			\\\hline
			12 and 13 & \parbox[c][3cm][c]{0.65\textwidth}{
			\includegraphics[width=5cm]{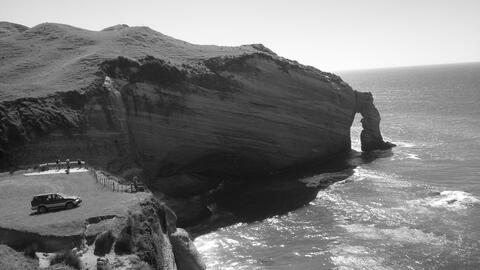}
			\includegraphics[width=5cm]{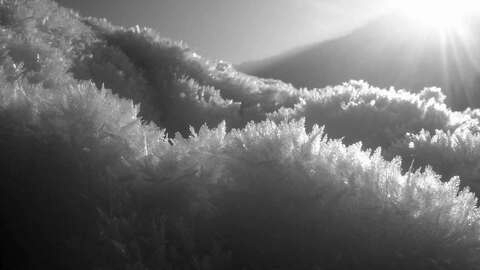}
			}\\\hline
			14 and 15 & \parbox[c][3cm][c]{0.65\textwidth}{
			\includegraphics[width=5cm]{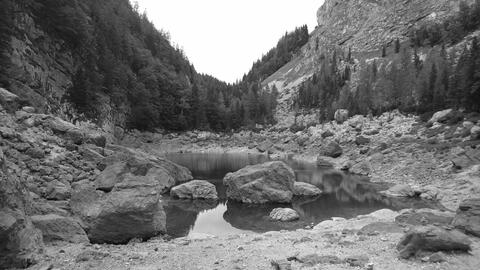}
			\includegraphics[width=5cm]{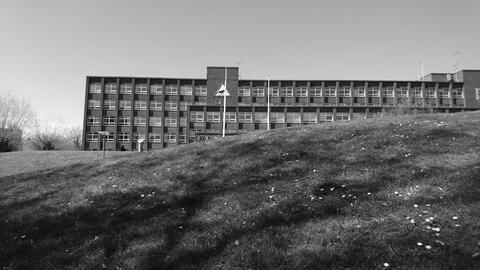}
			}\\\hline
			16 and 17 & \parbox[c][3cm][c]{0.65\textwidth}{
			\includegraphics[width=5cm]{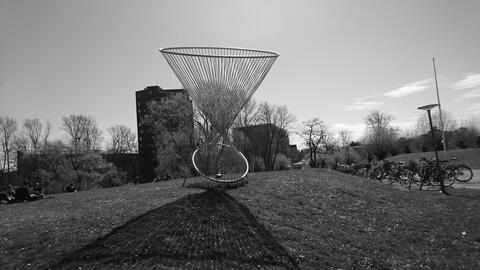}
			\includegraphics[width=5cm]{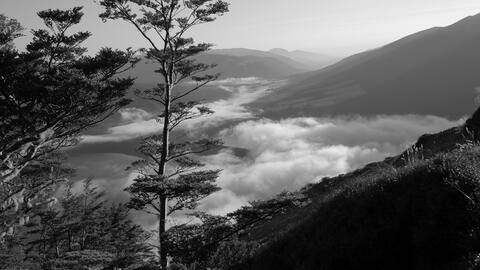}
			}\\\hline
			18 and 19 & \parbox[c][3cm][c]{0.65\textwidth}{
			\includegraphics[width=5cm]{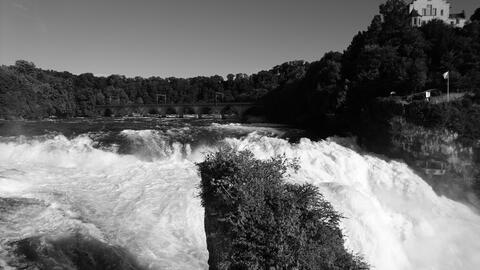}
			\includegraphics[width=5cm]{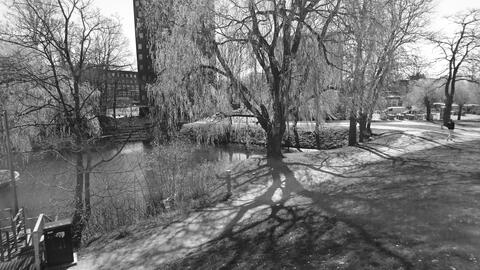}
			}\\\hline
		\end{tabular}
	\caption{Real world images used for tests, which are ordered left to right according to the IDs listed in the first column.}\label{fig:realworldimg}
\end{figure}
In all experiments, we chose $h=1.0$. Let $\gt^h\in \mcX(\Omega^h,1)$ be the discrete ground truth image, $d^h\in \mcX(\Omega^h,1)$ its reconstruction and $\eta^h := d^h-\gt^h$ the reconstruction error. 
The peak signal-to-noise ratio (PSNR) is defined as:
\begin{align*}
\PSNR(d^h,\gt^h)
&=-10\cdot\log_{10}\big(\MSE(d^h,\gt^h)\big), \quad
\text{with }\MSE=\frac{1}{N_1 N_2}\sum_{i,j=1}^{N_2,N_1}\left( d^h_{i,j}-(\gt^h)_{i,j}\right)^2. 
\end{align*}
Let $\overline{d^h}$ and $\overline{\gt^h}$ denote the mean pixel values of $d^h$ and $\gt^h$, respectively.
The mean structural similarity index (MSSIM) is computed by:
\begin{align*}
&\MSSIM(d^h,\gt^h)=\frac{(2\overline{d^h}\overline{\gt^h}+c_1)(2\sigma_{d^h\gt^h}+c_2)}{(\overline{d^h}^2+\overline{\gt^h}^2+c_1)(\sigma_{d^h}^2+\sigma_{\gt^h}^2+c_2)},
\end{align*}
with the following variances and covariance:
\begin{align*}
&\sigma_{d^h}^2=\frac{1}{N_1 N_2}\sum_{i,j=1}^{N_2,N_1}(d^h_{i,j}-\overline{d^h})^2,~~~
\sigma_{\gt^h}^2=\frac{1}{N_1 N_2}\sum_{i,j=1}^{N_2,N_1}\left((\gt^h)_{i,j}-\overline{\gt^h}\right)^2,\\
&\sigma_{d^h\gt^h}=\frac{1}{N_1 N_2}\sum_{i,j=1}^{N_2,N_1}(d^h_{i,j}-\overline{d^h})\left((\gt^h)_{i,j}-\overline{\gt^h}\right).
\end{align*}
The constants are set as $c_1=(k_1 R)^2$, $c_2=(k_2 R)^2$ with $k_1=0.01$, $k_2=0.03$, and $R$ denoting the maximal possible pixel value. Since we only consider grayscale images with intensity range $[0,1]$, this yields $R=1$.
For each experimental setting, Gaussian noise is added to the ground truth image and the resulting noisy image is denoised using the TV-Stokes model under varying parameter configurations. Four noise levels were considered, corresponding to variances $\sigma^2\in\{0.0001,0.0025,0.01,0.09\}$. 
For each noise level and each of the 12 test images, we solve the discrete TFS problem \eqref{eq:tfsdualdis} using \cref{alg:chambTfs}. The regularization parameter $\delta$ is varied over the set 
\begin{align*}
\delta\in\{0.001, 0.002, 0.005, 0.01, 0.02, 0.04, 0.08, 0.15, 0.3, 0.6, 1.2, 2.5, 5.0, 10.0\}.
\end{align*}
\Cref{alg:chambTfs} is performed with step size $t=0.125$ and $max\_it~=10^6$.

As stop criteria, we used
\begin{align*}
\left|\mcD^h_{\mathrm{TFS}}(\vec{p}^{h,n})^{1/2}-\mcD^h_{\mathrm{TFS}}(\vec{p}^{h,n+1})^{1/2}\right| < (2|\tilde{\Omega}^h|)^{1/2}~T_{\mathcal{D}^h},
\end{align*}
with $T_{\mathcal{D}^h}=10^{-7}$.
We compute the ground truth tangent field $\vec\tau^h_{gt}=(\tau^h_{gt,1},\tau^h_{gt,2})$ from the ground truth image and compare it component-wise to the estimated tangent field $(\tau^h_1, \tau^h_2)$ obtained from solving \eqref{eq:tfsdualdis}. For each component, we evaluate the reconstruction quality using both PSNR and MSSIM.
To select the tangent field $\vec{\tau}^h$ corresponding to the parameter $\delta$ that yields the best overall performance for each noise level and test image, we define a composite performance measure balancing both metrics:
\begin{align*}
\Perf_{\vec\tau^h} =  \sum_{i=1}^2\frac{1}{2}\PSNR(\tau^h_i,\tau^h_{gt,i})
~+~20.0\cdot \frac{1}{2}\MSSIM(\tau^h_i,\tau^h_{gt,i}).
\end{align*}
Here, larger values of $\Perf_{\vec\tau^h}$ indicate higher overall reconstruction quality. Hence, the $\vec{\tau}^h$ maximizing $\Perf_{\vec\tau^h}$ is selected for use in IRV1 and IRV2.

We perform IRV1 by solving \eqref{eq:ir1dualdis}, where $\vec{\xi}^h$ is defined according to \eqref{eq:defXi2:discrete}, using Chambolle's algorithm. The algorithm is executed for all combinations of the parameters
\begin{align*}
\frac{1}{\mu}:= \alpha\in\left\{\frac{1}{3}\cdot 10^{-1}, 10^{-1},\frac{1}{3}\cdot 10^{0},10^{0},\ldots,10^{3}, \frac{1}{3}\cdot 10^{4}\right\}
\end{align*}
and
\begin{align*}
\epsilon\in\{10^{1}, 10^{0}, 10^{-1}, 10^{-2}, 10^{-3}, 10^{-4}, 10^{-7}, 10^{-10}, 10^{-13}\}.
\end{align*}
The iteration is run with step size $t=0.125$ and $max\_it~=10^6$.
As stop criteria, we used
\begin{align*}
\left|\mcD^h_{\mathrm{IRV1}}(\vec{p}^{h,n})^{1/2}-\mcD^h_{\mathrm{IRV1}}(\vec{p}^{h,n+1})^{1/2}\right| < |\Omega^h|^{1/2}~T_{\mcD^h}
\end{align*}
with $T_{\mcD^h}=10^{-7}$.

For IRV2 we solve
\eqref{eq:ir2dualdis} using the Chambolle iteration for
\begin{align*}
\alpha \in \{10^{-6}, 3\cdot 10^{-6}, 10^{-5}, 3\cdot 10^{-5},\ldots, 3\cdot 10^2, 10^3\}.
\end{align*}
As in the previous experiments, we use a step size of $t=0.125$ and set $max\_it~=10^6$.
Convergence is assessed using the stopping criterion
\begin{align*}
\left|\mcD^h_{\mathrm{IRV2}}(\vec{p}^{h,n})^{1/2}-\mcD^h_{\mathrm{IRV2}}(\vec{p}^{h,n+1})^{1/2}\right| < |\Omega^h|^{1/2}~T_{\mcD^h},
\end{align*}
where the tolerance is set to $T_{\mcD^h}=10^{-7}$.

\subsubsection*{Results}
The best results for IRV1 and IRV2, both per test image and averaged over all test images, are reported in \cref{tab:expsig001} for $\sigma^2=0.0001$, \cref{tab:expsig005} for $\sigma^2=0.0025$, \cref{tab:expsig01} for $\sigma^2=0.01$ and \cref{tab:expsig03} for $\sigma^2=0.09$. 

Each table indicates the value of $\delta$ for which TFS achieved the best performance (and was therefore used in the reconstruction), as well as the parameters for which IRV1 and IRV2 performed best with respect to PSNR and MSSIM. The final row in each table presents the average over all 12 test images, providing a global comparison across methods and parameter configurations.

\begin{table}[htbp]
	\centering
    \caption{Performance comparison of the image reconstruction variants for noise level $\sigma^2=0.0001$.} \label{tab:expsig001}
    {\footnotesize{
	\begin{tabular}{|c|c|c|c|c|c|}
		\hline
		\textbf{Image}& \textbf{TFS Parameter}  & \textbf{Metric}& \textbf{IRV} & \textbf{Best IRV Parameters}  & \textbf{Metric Value} \\ \hline 
		\multirow{4}{*}{00} &\multirow{4}{*}{$\delta=0.01$} & \multirow{2}{*}{PSNR} & 1 &$(\mu,\epsilon)=(0.01,10.0)$&44.8481\\\cmidrule{4-6}
		&&& \textbf{2} & $\alpha=100.0$ &\textbf{45.0686}\\\cmidrule{3-6}
		&& \multirow{2}{*}{MSSIM} & \textbf{1} & $(\mu,\epsilon)=(0.01,1.0\cdot 10^{-7})$&\textbf{0.998253}\\\cmidrule{4-6}
		&&& 2 & $\alpha=100.0$&0.998192\\ \hline 
		\multirow{4}{*}{01} &\multirow{4}{*}{$\delta=0.01$} & \multirow{2}{*}{PSNR} & \textbf{1} &$(\mu,\epsilon)=(0.01,10.0)$&\textbf{46.5367}\\\cmidrule{4-6}
		&&& 2 & $\alpha=100.0$ &45.617\\\cmidrule{3-6}
		&& \multirow{2}{*}{MSSIM} & 1 & $(\mu,\epsilon)=(0.01,0.1)$&0.988337\\\cmidrule{4-6}
		&&& \textbf{2} & $\alpha=0.003$&\textbf{0.988531}\\ \hline 
		\multirow{4}{*}{10} &\multirow{4}{*}{$\delta=0.005$} & \multirow{2}{*}{PSNR} & 1 &$(\mu,\epsilon)=(0.003,10.0)$&41.9494\\\cmidrule{4-6}
		&&& \textbf{2} & $\alpha=100.0$ &\textbf{42.1273}\\\cmidrule{3-6}
		&& \multirow{2}{*}{MSSIM} & 1 & $(\mu,\epsilon)=(0.003,0.0001)$&0.991629\\\cmidrule{4-6}
		&&& \textbf{2} & $\alpha=30.0$&\textbf{0.992383}\\ \hline 
		\multirow{4}{*}{11} &\multirow{4}{*}{$\delta=0.01$} & \multirow{2}{*}{PSNR} & 1 &$(\mu,\epsilon)=(0.003,10.0)$&43.0851\\\cmidrule{4-6}
		&&& \textbf{2} & $\alpha=0.003$ &\textbf{44.0418}\\\cmidrule{3-6}
		&& \multirow{2}{*}{MSSIM} & 1 & $(\mu,\epsilon)=(0.01,10.0)$&0.992007\\\cmidrule{4-6}
		&&& \textbf{2} & $\alpha=1.0\cdot 10^{-6}$&\textbf{0.994176}\\ \hline 
		\multirow{4}{*}{12} &\multirow{4}{*}{$\delta=0.005$} & \multirow{2}{*}{PSNR} & 1 &$(\mu,\epsilon)=(0.003,10.0)$&41.6801\\\cmidrule{4-6}
		&&& \textbf{2} & $\alpha=0.01$ &\textbf{41.8891}\\\cmidrule{3-6}
		&& \multirow{2}{*}{MSSIM} & 1 & $(\mu,\epsilon)=(0.003,0.01)$&0.99044\\\cmidrule{4-6}
		&&& \textbf{2} & $\alpha=0.03$&\textbf{0.991273}\\ \hline 
		\multirow{4}{*}{13} &\multirow{4}{*}{$\delta=0.005$} & \multirow{2}{*}{PSNR} & 1 &$(\mu,\epsilon)=(0.003,0.0001)$&42.9194\\\cmidrule{4-6}
		&&& \textbf{2} & $\alpha=0.03$ &\textbf{43.2712}\\\cmidrule{3-6}
		&& \multirow{2}{*}{MSSIM} & 1 & $(\mu,\epsilon)=(0.003,0.0001)$&0.991477\\\cmidrule{4-6}
		&&& \textbf{2} & $\alpha=0.03$&\textbf{0.992131}\\ \hline 
		\multirow{4}{*}{14} &\multirow{4}{*}{$\delta=0.002$} & \multirow{2}{*}{PSNR} & 1 &$(\mu,\epsilon)=(0.001,1.0\cdot 10^{-7})$&40.5863\\\cmidrule{4-6}
		&&& \textbf{2} & $\alpha=0.01$ &\textbf{40.6096}\\\cmidrule{3-6}
		&& \multirow{2}{*}{MSSIM} & \textbf{1} & $(\mu,\epsilon)=(0.003,1.0)$&\textbf{0.992162}\\\cmidrule{4-6}
		&&& 2 & $\alpha=1.0\cdot 10^{-6}$&0.992089\\ \hline 
		\multirow{4}{*}{15} &\multirow{4}{*}{$\delta=0.005$} & \multirow{2}{*}{PSNR} & 1 &$(\mu,\epsilon)=(0.003,10.0)$&41.1716\\\cmidrule{4-6}
		&&& \textbf{2} & $\alpha=0.003$ &\textbf{41.2664}\\\cmidrule{3-6}
		&& \multirow{2}{*}{MSSIM} & 1 & $(\mu,\epsilon)=(0.003,10.0)$&0.991733\\\cmidrule{4-6}
		&&& \textbf{2} & $\alpha=0.03$&\textbf{0.992049}\\ \hline 
		\multirow{4}{*}{16} &\multirow{4}{*}{$\delta=0.005$} & \multirow{2}{*}{PSNR} & 1 &$(\mu,\epsilon)=(0.003,10.0)$&41.2678\\\cmidrule{4-6}
		&&& \textbf{2} & $\alpha=0.01$ &\textbf{41.3697}\\\cmidrule{3-6}
		&& \multirow{2}{*}{MSSIM} & 1 & $(\mu,\epsilon)=(0.003,10.0)$&0.990896\\\cmidrule{4-6}
		&&& \textbf{2} & $\alpha=0.03$&\textbf{0.991308}\\ \hline 
		\multirow{4}{*}{17} &\multirow{4}{*}{$\delta=0.005$} & \multirow{2}{*}{PSNR} & 1 &$(\mu,\epsilon)=(0.003,10.0)$&41.3502\\\cmidrule{4-6}
		&&& \textbf{2} & $\alpha=0.003$ &\textbf{41.4572}\\\cmidrule{3-6}
		&& \multirow{2}{*}{MSSIM} & 1 & $(\mu,\epsilon)=(0.003,1.0)$&0.991676\\\cmidrule{4-6}
		&&& \textbf{2} & $\alpha=0.01$&\textbf{0.992344}\\ \hline 
		\multirow{4}{*}{18} &\multirow{4}{*}{$\delta=0.005$} & \multirow{2}{*}{PSNR} & 1 &$(\mu,\epsilon)=(0.003,10.0)$&41.3533\\\cmidrule{4-6}
		&&& \textbf{2} & $\alpha=0.003$ &\textbf{41.5118}\\\cmidrule{3-6}
		&& \multirow{2}{*}{MSSIM} & 1 & $(\mu,\epsilon)=(0.003,0.1)$&0.99097\\\cmidrule{4-6}
		&&& \textbf{2} & $\alpha=0.01$&\textbf{0.991633}\\ \hline 
		\multirow{4}{*}{19} &\multirow{4}{*}{$\delta=0.002$} & \multirow{2}{*}{PSNR} & 1 &$(\mu,\epsilon)=(0.001,0.1)$&40.2967\\\cmidrule{4-6}
		&&& \textbf{2} & $\alpha=300.0$ &\textbf{40.3515}\\\cmidrule{3-6}
		&& \multirow{2}{*}{MSSIM} & 1 & $(\mu,\epsilon)=(0.001,0.0001)$&0.993443\\\cmidrule{4-6}
		&&& \textbf{2} & $\alpha=1.0\cdot 10^{-6}$&\textbf{0.993588}\\ \hline 
		\multirow{4}{*}{Average} &\multirow{4}{*}{different} & \multirow{2}{*}{PSNR} & 1 &different&42.2537\\\cmidrule{4-6}
		&&& \textbf{2} & different &\textbf{42.3817}\\\cmidrule{3-6}
		&& \multirow{2}{*}{MSSIM} & 1 & different&0.991919\\\cmidrule{4-6}
		&&& \textbf{2} & different&\textbf{0.992475}\\ \hline 
	\end{tabular}}}
	\end{table}
\begin{table}[htbp]
	\centering
    \caption{Performance comparison of the image reconstruction variants for noise level $\sigma^2=0.0025$.}
	\label{tab:expsig005}
    {\footnotesize{
	\begin{tabular}{|c|c|c|c|c|c|}
		\hline
		\textbf{Image}& \textbf{TFS Parameter}  & \textbf{Metric}& \textbf{IRV} & \textbf{Best IRV Parameters}  & \textbf{Metric Value} \\ \hline 
		\multirow{4}{*}{00} &\multirow{4}{*}{$\delta=0.04$} & \multirow{2}{*}{PSNR} & \textbf{1} &$(\mu,\epsilon)=(0.03,10.0)$&\textbf{32.4891}\\\cmidrule{4-6}
		&&& 2 & $\alpha=30.0$ &32.1852\\\cmidrule{3-6}
		&& \multirow{2}{*}{MSSIM} & 1 & $(\mu,\epsilon)=(0.1,10.0)$&0.9624\\\cmidrule{4-6}
		&&& \textbf{2} & $\alpha=30.0$&\textbf{0.96353}\\ \hline 
		\multirow{4}{*}{01} &\multirow{4}{*}{$\delta=0.04$} & \multirow{2}{*}{PSNR} & \textbf{1} &$(\mu,\epsilon)=(0.03,10.0)$&\textbf{33.5982}\\\cmidrule{4-6}
		&&& 2 & $\alpha=30.0$ &33.3289\\\cmidrule{3-6}
		&& \multirow{2}{*}{MSSIM} & \textbf{1} & $(\mu,\epsilon)=(0.03,0.01)$&\textbf{0.86145}\\\cmidrule{4-6}
		&&& 2 & $\alpha=0.003$&0.859822\\ \hline 
		\multirow{4}{*}{10} &\multirow{4}{*}{$\delta=0.04$} & \multirow{2}{*}{PSNR} & 1 &$(\mu,\epsilon)=(0.03,1.0)$&31.6541\\\cmidrule{4-6}
		&&& \textbf{2} & $\alpha=30.0$ &\textbf{31.8468}\\\cmidrule{3-6}
		&& \multirow{2}{*}{MSSIM} & 1 & $(\mu,\epsilon)=(0.03,0.1)$&0.933091\\\cmidrule{4-6}
		&&& \textbf{2} & $\alpha=10.0$&\textbf{0.936612}\\ \hline 
		\multirow{4}{*}{11} &\multirow{4}{*}{$\delta=0.04$} & \multirow{2}{*}{PSNR} & 1 &$(\mu,\epsilon)=(0.03,0.01)$&34.5462\\\cmidrule{4-6}
		&&& \textbf{2} & $\alpha=10.0$ &\textbf{34.6197}\\\cmidrule{3-6}
		&& \multirow{2}{*}{MSSIM} & \textbf{1} & $(\mu,\epsilon)=(0.1,0.0001)$&\textbf{0.960308}\\\cmidrule{4-6}
		&&& 2 & $\alpha=3.0$&0.958834\\ \hline 
		\multirow{4}{*}{12} &\multirow{4}{*}{$\delta=0.04$} & \multirow{2}{*}{PSNR} & 1 &$(\mu,\epsilon)=(0.03,1.0)$&31.7329\\\cmidrule{4-6}
		&&& \textbf{2} & $\alpha=30.0$ &\textbf{31.7912}\\\cmidrule{3-6}
		&& \multirow{2}{*}{MSSIM} & 1 & $(\mu,\epsilon)=(0.03,0.1)$&0.931671\\\cmidrule{4-6}
		&&& \textbf{2} & $\alpha=30.0$&\textbf{0.932952}\\ \hline 
		\multirow{4}{*}{13} &\multirow{4}{*}{$\delta=0.04$} & \multirow{2}{*}{PSNR} & 1 &$(\mu,\epsilon)=(0.03,0.01)$&33.9986\\\cmidrule{4-6}
		&&& \textbf{2} & $\alpha=30.0$ &\textbf{34.0041}\\\cmidrule{3-6}
		&& \multirow{2}{*}{MSSIM} & 1 & $(\mu,\epsilon)=(0.1,0.0001)$&0.953444\\\cmidrule{4-6}
		&&& \textbf{2} & $\alpha=0.03$&\textbf{0.954953}\\ \hline 
		\multirow{4}{*}{14} &\multirow{4}{*}{$\delta=0.04$} & \multirow{2}{*}{PSNR} & 1 &$(\mu,\epsilon)=(0.03,1.0)$&29.2559\\\cmidrule{4-6}
		&&& \textbf{2} & $\alpha=0.0003$ &\textbf{29.6477}\\\cmidrule{3-6}
		&& \multirow{2}{*}{MSSIM} & 1 & $(\mu,\epsilon)=(0.03,0.1)$&0.908757\\\cmidrule{4-6}
		&&& \textbf{2} & $\alpha=0.0003$&\textbf{0.917949}\\ \hline 
		\multirow{4}{*}{15} &\multirow{4}{*}{$\delta=0.04$} & \multirow{2}{*}{PSNR} & 1 &$(\mu,\epsilon)=(0.03,10.0)$&30.2435\\\cmidrule{4-6}
		&&& \textbf{2} & $\alpha=0.0003$ &\textbf{30.378}\\\cmidrule{3-6}
		&& \multirow{2}{*}{MSSIM} & 1 & $(\mu,\epsilon)=(0.03,10.0)$&0.919035\\\cmidrule{4-6}
		&&& \textbf{2} & $\alpha=30.0$&\textbf{0.920187}\\ \hline 
		\multirow{4}{*}{16} &\multirow{4}{*}{$\delta=0.04$} & \multirow{2}{*}{PSNR} & 1 &$(\mu,\epsilon)=(0.03,10.0)$&30.3721\\\cmidrule{4-6}
		&&& \textbf{2} & $\alpha=30.0$ &\textbf{30.5222}\\\cmidrule{3-6}
		&& \multirow{2}{*}{MSSIM} & 1 & $(\mu,\epsilon)=(0.03,10.0)$&0.912018\\\cmidrule{4-6}
		&&& \textbf{2} & $\alpha=30.0$&\textbf{0.913924}\\ \hline 
		\multirow{4}{*}{17} &\multirow{4}{*}{$\delta=0.04$} & \multirow{2}{*}{PSNR} & 1 &$(\mu,\epsilon)=(0.03,10.0)$&30.443\\\cmidrule{4-6}
		&&& \textbf{2} & $\alpha=0.0003$ &\textbf{30.5559}\\\cmidrule{3-6}
		&& \multirow{2}{*}{MSSIM} & 1 & $(\mu,\epsilon)=(0.03,0.1)$&0.919369\\\cmidrule{4-6}
		&&& \textbf{2} & $\alpha=0.003$&\textbf{0.920215}\\ \hline 
		\multirow{4}{*}{18} &\multirow{4}{*}{$\delta=0.04$} & \multirow{2}{*}{PSNR} & 1 &$(\mu,\epsilon)=(0.03,10.0)$&30.7996\\\cmidrule{4-6}
		&&& \textbf{2} & $\alpha=0.0003$ &\textbf{30.9465}\\\cmidrule{3-6}
		&& \multirow{2}{*}{MSSIM} & 1 & $(\mu,\epsilon)=(0.03,0.01)$&0.920268\\\cmidrule{4-6}
		&&& \textbf{2} & $\alpha=0.001$&\textbf{0.921744}\\ \hline 
		\multirow{4}{*}{19} &\multirow{4}{*}{$\delta=0.02$} & \multirow{2}{*}{PSNR} & 1 &$(\mu,\epsilon)=(0.01,0.01)$&27.9548\\\cmidrule{4-6}
		&&& \textbf{2} & $\alpha=0.01$ &\textbf{28.2612}\\\cmidrule{3-6}
		&& \multirow{2}{*}{MSSIM} & 1 & $(\mu,\epsilon)=(0.01,0.001)$&0.904475\\\cmidrule{4-6}
		&&& \textbf{2} & $\alpha=0.01$&\textbf{0.910227}\\ \hline 
		\multirow{4}{*}{Average} &\multirow{4}{*}{different} & \multirow{2}{*}{PSNR} & 1 &different&31.424\\\cmidrule{4-6}
		&&& \textbf{2} & different &\textbf{31.5073}\\\cmidrule{3-6}
		&& \multirow{2}{*}{MSSIM} & 1 & different&0.923857\\\cmidrule{4-6}
		&&& \textbf{2} & different&\textbf{0.925913}\\ \hline 
	\end{tabular}}}
	
\end{table}
\begin{table}[htbp]
	\centering
    \caption{Performance comparison of the image reconstruction variants for noise level $\sigma^2=0.01$.}
	\label{tab:expsig01}
    {\footnotesize{
	\begin{tabular}{|c|c|c|c|c|c|}
		\hline
		\textbf{Image}& \textbf{TFS Parameter}  & \textbf{Metric}& \textbf{IRV} & \textbf{Best IRV Parameters}  & \textbf{Metric Value} \\ \hline 
		\multirow{4}{*}{00} &\multirow{4}{*}{$\delta=0.08$} & \multirow{2}{*}{PSNR} & 1 &$(\mu,\epsilon)=(0.1,10.0)$&25.3134\\\cmidrule{4-6}
		&&& \textbf{2} & $\alpha=10.0$ &\textbf{25.5897}\\\cmidrule{3-6}
		&& \multirow{2}{*}{MSSIM} & \textbf{1} & $(\mu,\epsilon)=(0.1,10.0)$&\textbf{0.946662}\\\cmidrule{4-6}
		&&& 2 & $\alpha=10.0$&0.911459\\ \hline 
		\multirow{4}{*}{01} &\multirow{4}{*}{$\delta=0.08$} & \multirow{2}{*}{PSNR} & \textbf{1} &$(\mu,\epsilon)=(0.1,10.0)$&\textbf{28.212}\\\cmidrule{4-6}
		&&& 2 & $\alpha=10.0$ &27.8378\\\cmidrule{3-6}
		&& \multirow{2}{*}{MSSIM} & \textbf{1} & $(\mu,\epsilon)=(0.1,0.1)$&\textbf{0.77214}\\\cmidrule{4-6}
		&&& 2 & $\alpha=10.0$&0.757025\\ \hline 
		\multirow{4}{*}{10} &\multirow{4}{*}{$\delta=0.08$} & \multirow{2}{*}{PSNR} & \textbf{1} &$(\mu,\epsilon)=(0.1,0.01)$&\textbf{28.5348}\\\cmidrule{4-6}
		&&& 2 & $\alpha=10.0$ &28.4239\\\cmidrule{3-6}
		&& \multirow{2}{*}{MSSIM} & \textbf{1} & $(\mu,\epsilon)=(0.1,0.001)$&\textbf{0.880882}\\\cmidrule{4-6}
		&&& 2 & $\alpha=10.0$&0.867789\\ \hline 
		\multirow{4}{*}{11} &\multirow{4}{*}{$\delta=0.15$} & \multirow{2}{*}{PSNR} & \textbf{1} &$(\mu,\epsilon)=(0.1,0.0001)$&\textbf{32.5011}\\\cmidrule{4-6}
		&&& 2 & $\alpha=10.0$ &32.131\\\cmidrule{3-6}
		&& \multirow{2}{*}{MSSIM} & \textbf{1} & $(\mu,\epsilon)=(0.1,0.0001)$&\textbf{0.945975}\\\cmidrule{4-6}
		&&& 2 & $\alpha=10.0$&0.943373\\ \hline 
		\multirow{4}{*}{12} &\multirow{4}{*}{$\delta=0.08$} & \multirow{2}{*}{PSNR} & \textbf{1} &$(\mu,\epsilon)=(0.1,0.001)$&\textbf{28.4023}\\\cmidrule{4-6}
		&&& 2 & $\alpha=10.0$ &28.3683\\\cmidrule{3-6}
		&& \multirow{2}{*}{MSSIM} & \textbf{1} & $(\mu,\epsilon)=(0.1,0.001)$&\textbf{0.882216}\\\cmidrule{4-6}
		&&& 2 & $\alpha=10.0$&0.876387\\ \hline 
		\multirow{4}{*}{13} &\multirow{4}{*}{$\delta=0.08$} & \multirow{2}{*}{PSNR} & \textbf{1} &$(\mu,\epsilon)=(0.1,0.01)$&\textbf{31.1257}\\\cmidrule{4-6}
		&&& 2 & $\alpha=0.01$ &30.2484\\\cmidrule{3-6}
		&& \multirow{2}{*}{MSSIM} & \textbf{1} & $(\mu,\epsilon)=(0.1,0.01)$&\textbf{0.93104}\\\cmidrule{4-6}
		&&& 2 & $\alpha=0.03$&0.903424\\ \hline 
		\multirow{4}{*}{14} &\multirow{4}{*}{$\delta=0.08$} & \multirow{2}{*}{PSNR} & 1 &$(\mu,\epsilon)=(0.1,0.001)$&25.6394\\\cmidrule{4-6}
		&&& \textbf{2} & $\alpha=0.001$ &\textbf{26.1311}\\\cmidrule{3-6}
		&& \multirow{2}{*}{MSSIM} & 1 & $(\mu,\epsilon)=(0.1,0.0001)$&0.809588\\\cmidrule{4-6}
		&&& \textbf{2} & $\alpha=0.0003$&\textbf{0.831324}\\ \hline 
		\multirow{4}{*}{15} &\multirow{4}{*}{$\delta=0.08$} & \multirow{2}{*}{PSNR} & 1 &$(\mu,\epsilon)=(0.1,0.01)$&26.3656\\\cmidrule{4-6}
		&&& \textbf{2} & $\alpha=0.003$ &\textbf{26.9656}\\\cmidrule{3-6}
		&& \multirow{2}{*}{MSSIM} & 1 & $(\mu,\epsilon)=(0.1,0.001)$&0.841147\\\cmidrule{4-6}
		&&& \textbf{2} & $\alpha=0.01$&\textbf{0.843202}\\ \hline 
		\multirow{4}{*}{16} &\multirow{4}{*}{$\delta=0.08$} & \multirow{2}{*}{PSNR} & 1 &$(\mu,\epsilon)=(0.1,0.001)$&26.5624\\\cmidrule{4-6}
		&&& \textbf{2} & $\alpha=10.0$ &\textbf{27.183}\\\cmidrule{3-6}
		&& \multirow{2}{*}{MSSIM} & 1 & $(\mu,\epsilon)=(0.1,0.001)$&0.825525\\\cmidrule{4-6}
		&&& \textbf{2} & $\alpha=10.0$&\textbf{0.830345}\\ \hline 
		\multirow{4}{*}{17} &\multirow{4}{*}{$\delta=0.08$} & \multirow{2}{*}{PSNR} & 1 &$(\mu,\epsilon)=(0.1,0.001)$&26.3882\\\cmidrule{4-6}
		&&& \textbf{2} & $\alpha=0.001$ &\textbf{26.7634}\\\cmidrule{3-6}
		&& \multirow{2}{*}{MSSIM} & \textbf{1} & $(\mu,\epsilon)=(0.1,0.001)$&\textbf{0.82184}\\\cmidrule{4-6}
		&&& 2 & $\alpha=0.003$&0.82154\\ \hline 
		\multirow{4}{*}{18} &\multirow{4}{*}{$\delta=0.08$} & \multirow{2}{*}{PSNR} & 1 &$(\mu,\epsilon)=(0.1,0.001)$&26.9971\\\cmidrule{4-6}
		&&& \textbf{2} & $\alpha=0.001$ &\textbf{27.3432}\\\cmidrule{3-6}
		&& \multirow{2}{*}{MSSIM} & 1 & $(\mu,\epsilon)=(0.1,0.001)$&0.834812\\\cmidrule{4-6}
		&&& \textbf{2} & $\alpha=0.003$&\textbf{0.838988}\\ \hline 
		\multirow{4}{*}{19} &\multirow{4}{*}{$\delta=0.08$} & \multirow{2}{*}{PSNR} & 1 &$(\mu,\epsilon)=(0.03,0.01)$&23.7134\\\cmidrule{4-6}
		&&& \textbf{2} & $\alpha=0.0001$ &\textbf{24.7005}\\\cmidrule{3-6}
		&& \multirow{2}{*}{MSSIM} & 1 & $(\mu,\epsilon)=(0.03,0.01)$&0.780341\\\cmidrule{4-6}
		&&& \textbf{2} & $\alpha=0.0001$&\textbf{0.817587}\\ \hline 
		\multirow{4}{*}{Average} &\multirow{4}{*}{different} & \multirow{2}{*}{PSNR} & 1 &different&27.4796\\\cmidrule{4-6}
		&&& \textbf{2} & different &\textbf{27.6405}\\\cmidrule{3-6}
		&& \multirow{2}{*}{MSSIM} & \textbf{1} & different&\textbf{0.856014}\\\cmidrule{4-6}
		&&& 2 & different&0.853537\\ \hline 
	\end{tabular}}}
	
\end{table}
\begin{table}[htbp]
\centering
\caption{Performance comparison of the image reconstruction variants for noise level $\sigma^2=0.09$.}
	\label{tab:expsig03}
{\footnotesize{	
	\begin{tabular}{|c|c|c|c|c|c|}
		\hline
		\textbf{Image}& \textbf{TFS Parameter}  & \textbf{Metric}& \textbf{IRV} & \textbf{Best IRV Parameters}  & \textbf{Metric Value} \\ \hline 
		\multirow{4}{*}{00} &\multirow{4}{*}{$\delta=0.3$} & \multirow{2}{*}{PSNR} & \textbf{1} &$(\mu,\epsilon)=(0.3,1.0\cdot 10^{-7})$&\textbf{17.9774}\\\cmidrule{4-6}
		&&& 2 & $\alpha=10.0$ &17.2941\\\cmidrule{3-6}
		&& \multirow{2}{*}{MSSIM} & \textbf{1} & $(\mu,\epsilon)=(0.3,0.0001)$&\textbf{0.820006}\\\cmidrule{4-6}
		&&& 2 & $\alpha=3.0$&0.782291\\ \hline 
		\multirow{4}{*}{01} &\multirow{4}{*}{$\delta=0.3$} & \multirow{2}{*}{PSNR} & \textbf{1} &$(\mu,\epsilon)=(0.3,1.0\cdot 10^{-7})$&\textbf{19.8108}\\\cmidrule{4-6}
		&&& 2 & $\alpha=3.0$ &19.2286\\\cmidrule{3-6}
		&& \multirow{2}{*}{MSSIM} & \textbf{1} & $(\mu,\epsilon)=(0.3,0.0001)$&\textbf{0.673876}\\\cmidrule{4-6}
		&&& 2 & $\alpha=3.0$&0.671168\\ \hline 
		\multirow{4}{*}{10} &\multirow{4}{*}{$\delta=10.0$} & \multirow{2}{*}{PSNR} & \textbf{1} &$(\mu,\epsilon)=(0.3,0.0001)$&\textbf{24.1472}\\\cmidrule{4-6}
		&&& 2 & $\alpha=0.001$ &23.8306\\\cmidrule{3-6}
		&& \multirow{2}{*}{MSSIM} & \textbf{1} & $(\mu,\epsilon)=(0.3,0.0001)$&\textbf{0.791067}\\\cmidrule{4-6}
		&&& 2 & $\alpha=3.0$&0.785287\\ \hline 
		\multirow{4}{*}{11} &\multirow{4}{*}{$\delta=0.3$} & \multirow{2}{*}{PSNR} & \textbf{1} &$(\mu,\epsilon)=(0.3,1.0\cdot 10^{-7})$&\textbf{25.706}\\\cmidrule{4-6}
		&&& 2 & $\alpha=3.0$ &25.2301\\\cmidrule{3-6}
		&& \multirow{2}{*}{MSSIM} & \textbf{1} & $(\mu,\epsilon)=(0.3,0.0001)$&\textbf{0.903165}\\\cmidrule{4-6}
		&&& 2 & $\alpha=3.0$&0.900032\\ \hline 
		\multirow{4}{*}{12} &\multirow{4}{*}{$\delta=10.0$} & \multirow{2}{*}{PSNR} & \textbf{1} &$(\mu,\epsilon)=(0.3,1.0\cdot 10^{-7})$&\textbf{22.4471}\\\cmidrule{4-6}
		&&& 2 & $\alpha=0.001$ &22.1343\\\cmidrule{3-6}
		&& \multirow{2}{*}{MSSIM} & \textbf{1} & $(\mu,\epsilon)=(0.3,0.0001)$&\textbf{0.776834}\\\cmidrule{4-6}
		&&& 2 & $\alpha=3.0$&0.764316\\ \hline 
		\multirow{4}{*}{13} &\multirow{4}{*}{$\delta=10.0$} & \multirow{2}{*}{PSNR} & \textbf{1} &$(\mu,\epsilon)=(0.3,0.0001)$&\textbf{24.8922}\\\cmidrule{4-6}
		&&& 2 & $\alpha=0.003$ &24.7564\\\cmidrule{3-6}
		&& \multirow{2}{*}{MSSIM} & \textbf{1} & $(\mu,\epsilon)=(0.3,0.0001)$&\textbf{0.868593}\\\cmidrule{4-6}
		&&& 2 & $\alpha=3.0$&0.867148\\ \hline 
		\multirow{4}{*}{14} &\multirow{4}{*}{$\delta=10.0$} & \multirow{2}{*}{PSNR} & 1 &$(\mu,\epsilon)=(0.3,1.0\cdot 10^{-7})$&21.1228\\\cmidrule{4-6}
		&&& \textbf{2} & $\alpha=0.0003$ &\textbf{21.3355}\\\cmidrule{3-6}
		&& \multirow{2}{*}{MSSIM} & 1 & $(\mu,\epsilon)=(0.3,1.0\cdot 10^{-7})$&0.608897\\\cmidrule{4-6}
		&&& \textbf{2} & $\alpha=0.0003$&\textbf{0.63372}\\ \hline 
		\multirow{4}{*}{15} &\multirow{4}{*}{$\delta=0.3$} & \multirow{2}{*}{PSNR} & \textbf{1} &$(\mu,\epsilon)=(0.3,1.0\cdot 10^{-7})$&\textbf{21.7513}\\\cmidrule{4-6}
		&&& 2 & $\alpha=0.0003$ &21.6606\\\cmidrule{3-6}
		&& \multirow{2}{*}{MSSIM} & \textbf{1} & $(\mu,\epsilon)=(0.3,1.0\cdot 10^{-7})$&\textbf{0.687155}\\\cmidrule{4-6}
		&&& 2 & $\alpha=0.001$&0.672922\\ \hline 
		\multirow{4}{*}{16} &\multirow{4}{*}{$\delta=10.0$} & \multirow{2}{*}{PSNR} & \textbf{1} &$(\mu,\epsilon)=(0.3,0.0001)$&\textbf{22.4571}\\\cmidrule{4-6}
		&&& 2 & $\alpha=0.001$ &22.1759\\\cmidrule{3-6}
		&& \multirow{2}{*}{MSSIM} & \textbf{1} & $(\mu,\epsilon)=(0.3,0.0001)$&\textbf{0.713072}\\\cmidrule{4-6}
		&&& 2 & $\alpha=3.0$&0.7037\\ \hline 
		\multirow{4}{*}{17} &\multirow{4}{*}{$\delta=10.0$} & \multirow{2}{*}{PSNR} & 1 &$(\mu,\epsilon)=(0.3,1.0\cdot 10^{-7})$&20.9042\\\cmidrule{4-6}
		&&& \textbf{2} & $\alpha=0.0003$ &\textbf{20.9212}\\\cmidrule{3-6}
		&& \multirow{2}{*}{MSSIM} & \textbf{1} & $(\mu,\epsilon)=(0.3,1.0\cdot 10^{-7})$&\textbf{0.632838}\\\cmidrule{4-6}
		&&& 2 & $\alpha=0.001$&0.623967\\ \hline 
		\multirow{4}{*}{18} &\multirow{4}{*}{$\delta=0.3$} & \multirow{2}{*}{PSNR} & \textbf{1} &$(\mu,\epsilon)=(0.3,1.0\cdot 10^{-7})$&\textbf{21.4898}\\\cmidrule{4-6}
		&&& 2 & $\alpha=0.0003$ &21.2022\\\cmidrule{3-6}
		&& \multirow{2}{*}{MSSIM} & \textbf{1} & $(\mu,\epsilon)=(0.3,1.0\cdot 10^{-7})$&\textbf{0.681267}\\\cmidrule{4-6}
		&&& 2 & $\alpha=3.0$&0.669294\\ \hline 
		\multirow{4}{*}{19} &\multirow{4}{*}{$\delta=10.0$} & \multirow{2}{*}{PSNR} & 1 &$(\mu,\epsilon)=(0.3,1.0\cdot 10^{-7})$&19.6342\\\cmidrule{4-6}
		&&& \textbf{2} & $\alpha=0.0001$ &\textbf{20.0933}\\\cmidrule{3-6}
		&& \multirow{2}{*}{MSSIM} & 1 & $(\mu,\epsilon)=(0.3,1.0\cdot 10^{-7})$&0.511152\\\cmidrule{4-6}
		&&& \textbf{2} & $\alpha=0.0001$&\textbf{0.572295}\\ \hline 
		\multirow{4}{*}{Average} &\multirow{4}{*}{different} & \multirow{2}{*}{PSNR} & \textbf{1} &different&\textbf{21.8617}\\\cmidrule{4-6}
		&&& 2 & different &21.6552\\\cmidrule{3-6}
		&& \multirow{2}{*}{MSSIM} & \textbf{1} & different&\textbf{0.722327}\\\cmidrule{4-6}
		&&& 2 & different&0.720512\\ \hline 
	\end{tabular}}}
	
\end{table}
\Cref{tab:expcounts} summarizes how often each variant of the image reconstruction method outperformed the other with respect to the performance measures PSNR and MSSIM, across all noise levels. The results indicate that IRV1 tends to perform slightly better at higher noise levels, whereas IRV2 shows a slight advantage at lower noise levels. An additional row for each metric provides the total count aggregated over all noise levels.
\begin{table}[htbp]
	\centering
    \caption{Overview of how often each image reconstruction variant performed best across the evaluation set.}
	\label{tab:expcounts}
	\begin{tabular}{|c|c|c|c|}
		\hline
		\textbf{Metric} & \textbf{Noise Variance} &   \textbf{IRV1 better}  & \textbf{IRV2 better} \\ \hline 
		\multirow{5}{*}{PSNR}& 0.0001 &1 & 11\\\cmidrule{2-4}
		& 0.0025 &2 & 10\\\cmidrule{2-4}
		& 0.01 &5 & 7\\\cmidrule{2-4}
		& 0.09 &9 & 3\\\cmidrule{2-4}
		& \textbf{Total} & \textbf{17} & \textbf{31}\\\hline 
		\multirow{5}{*}{MSSIM}& 0.0001 &2 & 10\\\cmidrule{2-4}
		& 0.0025 &2 & 10\\\cmidrule{2-4}
		& 0.01 &7 & 5\\\cmidrule{2-4}
		& 0.09 &10 & 2\\\cmidrule{2-4}
		& \textbf{Total} & \textbf{21} & \textbf{27}\\\hline 
	\end{tabular}

\end{table}
Considering \cref{tab:expcounts} together with \cref{tab:expsig001,tab:expsig005,tab:expsig01,tab:expsig03}, we observe that IRV2 performs slightly better under low noise conditions, both in terms of PSNR and MSSIM, while IRV1 shows marginally better results under higher noise levels, given the tested parameter configurations. Overall, the performance of the two approaches is comparable, and depending on the application context, one may offer a slight advantage over the other. From a visual perspective, the difference in quality is minimal at low noise levels (e.g., $\sigma^2=0.0025$). For instance, when comparing the best reconstructions of Image 18, no substantial visual difference is noticeable between the two methods (see \cref{fig:schaff_00025_v1} and \cref{fig:schaff_00025_v2}).

%
%
%

 \begin{figure}[htbp]
 	\centering
 	\begin{subfigure}[t]{0.49\textwidth}
 		\centering
 		\includegraphics[width=\linewidth]{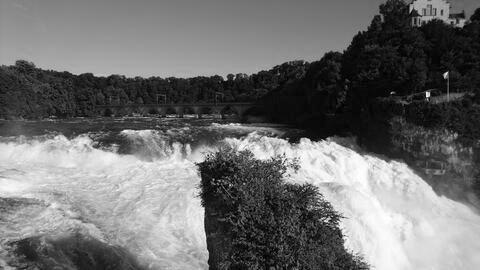}
 		\subcaption{Ground Truth, Image 18}
 		\label{fig:schaff_gt}
 	\end{subfigure}
 	\hfill
 	\begin{subfigure}[t]{0.49\textwidth}
 		\centering
 		\includegraphics[width=\linewidth]{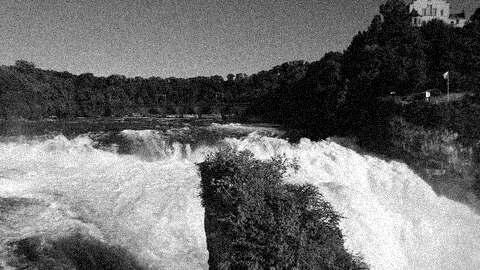}
 		\subcaption{Image 18, noisy, $\sigma^2=0.0025$}
 		\label{fig:schaff_00025}
 	\end{subfigure}
 	\hfill
 	\begin{subfigure}[t]{0.49\textwidth}
 		\centering
 		\includegraphics[width=\linewidth]{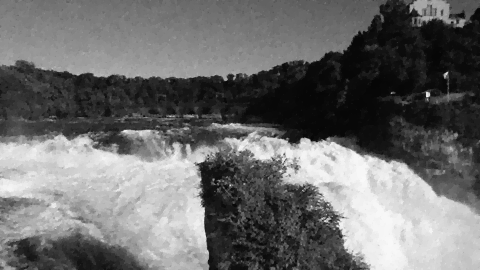}
 		\subcaption{Denoised from $\sigma^2=0.0025$ with IRV1, $\mu=0.03$, $\epsilon=0.01$ (best MSSIM), MSSIM $=0.920268$, PSNR $=30.6543$}
 		\label{fig:schaff_00025_v1}
 	\end{subfigure}
 	\hfill
 	\begin{subfigure}[t]{0.49\textwidth}
 		\centering
 		\includegraphics[width=\linewidth]{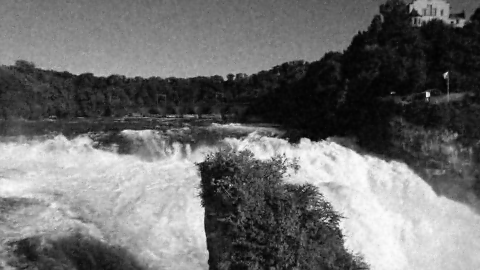}
 		\subcaption{Denoised from $\sigma^2=0.0025$ with IRV2, $\alpha=0.001$ (best MSSIM), MSSIM $= 0.921744$, PSNR $= 30.9195$}
 		\label{fig:schaff_00025_v2}
 	\end{subfigure}
 	\caption{Denoising example for light noise, $\sigma^2=0.0025$.}
 	\label{fig:schaff_denoising_00025}
 \end{figure}
For high noise levels ($\sigma^2=0.09$), the choice of $\alpha$ in IRV2 appears to have a significant impact on reconstruction quality. In some cases, the best performance is achieved at widely differing $\alpha$ values. For example, in \cref{tab:expsig03}, Image 18 achieves the highest PSNR at $\alpha=0.0003$, whereas the best MSSIM is attained at $\alpha=3.0$. The resulting reconstructions also differ visually (see \cref{fig:schaff_009_v2_00003,fig:schaff_009_v2_3}). In contrast, IRV1 exhibits more stable behavior with respect to the parameter $\mu$, with optimal values varying less across metrics and images (see again \cref{tab:expsig03}).
\begin{figure}[htbp]
  \centering

  \subfloat[Image 18, noisy, $\sigma^2=0.09$\\~\\~\label{fig:schaff_009}]{
    \includegraphics[width=0.48\linewidth]{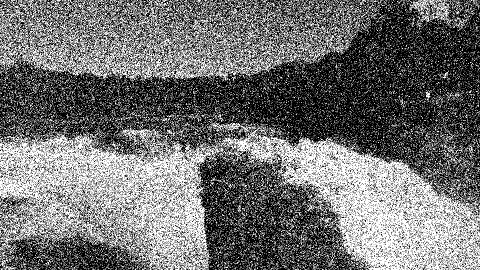}
  }\hfill
  \subfloat[Denoised from $\sigma^2=0.09$ with IRV1, $\mu=0.3$, $\epsilon=10^{-7}$ (best PSNR and MSSIM), MSSIM $= 0.681267$, PSNR $= 21.4898$\label{fig:schaff_009_v1}]{
    \includegraphics[width=0.48\linewidth]{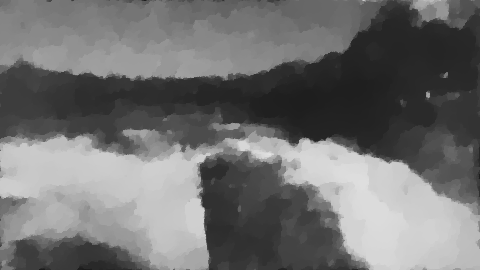}
  }\\[1ex]
  \subfloat[Denoised from $\sigma^2=0.09$ with IRV2, $\alpha=3.0$ (best MSSIM), MSSIM $= 0.669294$, PSNR $= 20.9540$\label{fig:schaff_009_v2_3}]{
    \includegraphics[width=0.48\linewidth]{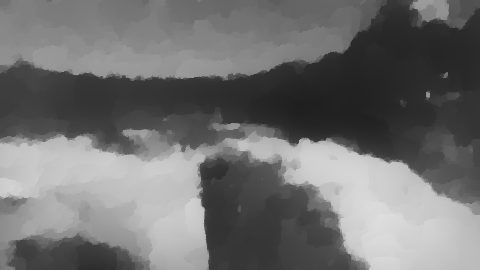}
  }\hfill
  \subfloat[Denoised from $\sigma^2=0.09$ with IRV2, $\alpha=0.0003$ (best PSNR), MSSIM $= 0.645003$, PSNR $= 21.2022$\label{fig:schaff_009_v2_00003}]{
    \includegraphics[width=0.48\linewidth]{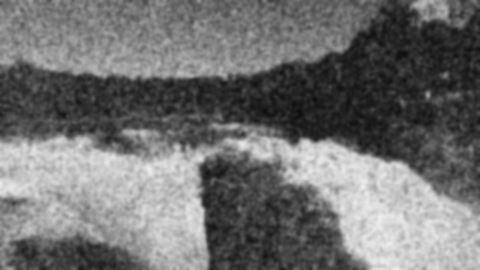}
  }

  \caption{Denoising example for heavy noise, $\sigma^2=0.09$}
  \label{fig:schaff_denoising_009}
\end{figure}
A general observation from \cref{tab:expsig001,tab:expsig005,tab:expsig01,tab:expsig03} is that the selected parameter $\delta$ with the best performance is getting bigger with increasing $\sigma^2$. This makes sense, since the smoothing term should be weighted more when the noise level is large. Similarly, we see that the best performing $\mu$ is growing. In a similar way, one would expect that for IRV2 the best performing $\alpha$ should be decreasing. This cannot be observed, as in some cases, the denoised image yielding the largest PSNR is obtained for very low $\alpha$, cf.\  \cref{fig:schaff_009_v2_3,fig:schaff_009_v2_00003}.

\section{Domain Decomposition for TV-Stokes}\label{sec:DD}

\subsection{Discrete Domain Decomposition}

We utilize the overlapping domain decomposition algorithm presented in \cite{ChaTaiWanYan} to decompose the TV-Stokes model into problems of smaller size.
To this end, we consider the discrete rectangular domains $\Omega^h$ and $\tilde \Omega^h$ 
%
and decompose them into $M_2\times M_1$ overlapping rectangular domains, where $M_1,M_2\in\N$ denote the number of subdomains in $y$ (vertical) and $x$ (horizontal) directions, respectively. For $\Gamma^h \in \{\Omega^h,\tilde\Omega^h\}$ each subdomain is denoted by $\Gamma^h_m=\Gamma^h_{m,y}\times\Gamma^h_{m,x}$, $m=(m_2,m_1)=(1,1),\ldots,(M_2,M_1)$, such that 
\[
\Gamma^h = \bigcup_{m=(1,1)}^{(M_2,M_1)} \Gamma^h_m \quad \text{and} \quad \Gamma^h_m\cap \Gamma^h_{l} \not=\emptyset \ \text{for} \ m\not=l.
\]
By $s>0$ we denote the overlapping size of subdomains which refers to the width of the stripe, measured in grid points, by which one subdomain extends into its neighboring domain.
We introduce a partition of unity $\{\theta^h_m\}_m$ satisfying the following properties:
\begin{enumerate}[(i)]
	\item $\sum\limits_{m=(1,1)}^{(M_2,M_1)} \theta^h_m = 1$, $\theta^h_m \geq 0$ for all $m=(1,1),...,(M_2,M_1)$;
	\item $\theta^h_m\in\mcX(\Gamma^h,1)$ with $\operatorname{supp}\theta^h_m \subset \Gamma^h_m$;
	\item $\|\nabla^h \theta^h_m\|_\infty \leq \frac{C_0}{s}$, with $C_0>0$ independent of the overlapping size $s$.
\end{enumerate}
Utilizing the partition of unity functions $\{\theta^h_m\}_m$
we define for a set $\mathcal{S}\subseteq \mcX(\Gamma^h,2\times c)$, $c=1,2$, the space 
\begin{align*}
\B^h_m(\mathcal{S})&:=\{\vec{p}^h\in \mathcal{S} \ : \ |(\vec{p}^h_{k})_{i,j}|\leq (\theta^h_m)_{i,j} \text{ for all } i=1,...,\#\Gamma_y^h,\; j=1,...,\#\Gamma_x^h \; \text{and} \; k=1,...,c\}.
\end{align*}
Building on these elements, the discrete parallel domain decomposition method of \eqref{eq:opt_dual_discrete} is outlined in \cref{alg:disalgsubsp}. 

\begin{algorithm}[H]
	\newcommand{\Break}{\State \textbf{break} }
	\caption{Discrete parallel domain decomposition}\label{alg:disalgsubsp}
	\begin{algorithmic}
		\Require $M=(M_2,M_1)$, $\widehat\alpha\in\big(0,1\big]$
		\For{$n = 0,1,2,...,max\_it$}
		\For{$m=(1,1),...,(M_2,M_1)$}
		\State $\vec{\hat{q}}^{h,n}_m \gets \argmin\limits_{\vec v^h\in \B_m(\mcX(\Gamma^h,2\times c))}\mcD^h\left(\vec{v}^h+\sum\limits_{l\neq m}\theta^h_l\vec{p}^{h,n}\right)$ ~~~~~~~~~~~from Algorithm \ref{alg:tfsparinner}
		\EndFor
		\State $\vec{p}^{h,n+1} \gets (1-\widehat\alpha)\vec p^{{h,n}}+\widehat\alpha \sum\limits_{l=(1,1)}^{(M_2,M_1)}\hat{\vec q}_l^{h,n}$
		\If{stop\_criteria}
		\Break
		\EndIf
		\EndFor
	\end{algorithmic}
\end{algorithm}

Note that, using the \textit{coloring technique} (see, e.g., \cite{TosWid}), the value of $\widehat\alpha$ can be optimized based on the minimal number of colors required to partition the index set $\{(1,1), \ldots, (M_2, M_1)\}$ such that the corresponding subdomains $(\Gamma^h_m)_m$ with the same color are mutually non-overlapping.  Consequently, we choose $\widehat\alpha = 1$ for $M_1 = M_2 = 1$, $\widehat\alpha = 0.5$ for $M_1 = 1$ or $M_2 = 1$, and $\widehat\alpha = 0.25$ for $M_1 > 1$ and $M_2 > 1$.

Note that in \cref{alg:disalgsubsp} one sets $\Gamma^h = \tilde{\Omega}^h$ for Step~1 (Tangent Field Smoothing, \eqref{eq:tfsdualdis}) and $\Gamma^h = {\Omega}^h$ for Step~2 (Image Reconstruction, \eqref{eq:ir1dualdis}-\eqref{eq:ir2dualdis}). 
Although our framework allows different decompositions in each step of the TV-Stokes model, it seems reasonable to use the same splitting for both steps in the sense that $\Omega_m^h = \tilde \Omega_m^h\cap{\Omega^h}$ for all $m=(1,1),\ldots,(M_2,M_1)$. 

By \cite{ChaTaiWanYan, HilbLanger2022} \cref{alg:disalgsubsp} generates a sequence $(\vec{p}^{h,n})_n$ which converges with order $\mathcal{O}(n^{-1/2})$ to a solution $\vec{p}^{h,*}$ of \eqref{eq:opt_dual_discrete}. This convergence order is also ensured if the subproblems in \cref{alg:disalgsubsp} are only solved approximately \cite{HilbLanger2022}.
A solution strategy for solving the respective subproblems is the semi-implicit dual multiplier method presented in \cite{HilbLanger2022}, which is a generalization of the algorithm proposed in \cite{Chambolle:2004}.
An explicit formulation of the algorithm is given in \cref{alg:tfsparinner}.
\begin{algorithm}[htbp]
	\newcommand{\Break}{\State \textbf{break} }
	\caption{Discrete parallel domain decomposition: Inner loop (Chambolle's algorithm	)}\label{alg:tfsparinner}
	\begin{algorithmic}
		\Require $m \in \{(1,1),\ldots,(M_2,M_1)\}, \vec{p}^{h,n},\vec{\hat{q}}_m^{h,n}\in \mcX(\Gamma^h, 2\times c),\vec{f}^h\in\mcX(\Gamma^h,c), t\in(0,\frac{1}{8}]$ 
		\State $\vec{v}^{h,0}\gets\vec{\hat{q}}_m^{h,n}$
		\For{$\nu = 0,1,...,max\_inner\_it$}
		\State $\vec\psi^{h,\nu} = \left(\vec{\psi}_1^{h,\nu},\ldots, \vec{\psi}_c^{h,\nu}\right) \gets (\Lambda^{h})^*\left(-\Lambda^h\left(\vec{v}^{h,\nu}+\sum\limits_{l\neq m}\theta^h_l\vec{p}^{h,n}\right)+\vec{f}^h\right)$
		\State
		$\vec{v}^{h,\nu+1} = \left( \vec{v}_1^{h,\nu+1}, \ldots, \vec{v}_c^{h,\nu+1}\right)\gets 
		\left(\dfrac{\theta_m^h\vec{v}_1^{h,\nu}+t\theta_m^h \vec\psi_1^{h,\nu}}{\theta_m^h+t|\vec\psi_1^{h,\nu}|},\ldots, \dfrac{\theta_m^h\vec{v}_c^{h,\nu}+t\theta_m^h \vec\psi_c^{h,\nu}}{\theta_m^h+t|\vec\psi_c^{h,\nu}|}\right) 
		$
		\If{inner\_stop\_criteria}
		\State $\vec{\hat{q}}_m^{h,n+1} \gets \vec{v}^{h,\nu+1}$
		\Break
		\EndIf
		\EndFor
		\State\Return $\vec{\hat{q}}_m^{h,n+1}$
	\end{algorithmic}
\end{algorithm}

Although \cref{alg:disalgsubsp}, which relies on \cref{alg:tfsparinner}, performs well in practice, it is important to note that all expressions still reside on the full set $\Gamma^h$. The key motivation behind our formulation, however, is to design the loop in \cref{alg:tfsparinner} to operate only on the subset $\Gamma^h_m$, thereby limiting memory usage.
For IRV1 and IRV2, this strategy can be implemented relatively smoothly, aside from minor complications at subdomain boundaries, since $\div^h_{\Omega^h}$ and $(\Lambda^h)^*=-\grad^h_{\Omega^h}$ are not completely local, cf.\ \cite{LangerGaspoz:19}). In the case of TFS, however, the situation is more involved: the operator $\Lambda^h$ includes $\mathcal{P}^h_{K^h}$, which is completely global and becomes entangled with $\vec{f}^h$ during the iteration, both of which are defined on $\tilde\Omega^h$. Resolving this issue and deriving a localized variant of \cref{alg:tfsparinner} for the TFS 
case is the focus of the following subsection.

\subsection{Local Subspace Iterations}\label{Sec:LocalSubspaceIteration}
To formulate the discrete subspace iterations for the parallel TV-Stokes-algorithm in its local variant, we introduce further notation. In what follows, we focus solely on the TFS step and therefore restrict our attention to the domain $\tilde{\Omega}^h$.

\subsubsection{Further Useful Notations}
Here and in the sequel, we assume that all subsets of $\tilde{\Omega}^h$ are, like $\tilde{\Omega}^h$ itself, rectangular domains. This allows us to define finite difference operators on these subsets as in \cref{Sec:Discrete:Notation}.

For $c\in\{1,2, 2\times 2\}$ and a subset $A^h\subset\tilde{\Omega}^h=\tilde{\Omega}_y^h\times\tilde{\Omega}_x^h$, we define 
the extension operator $E_{A^h} : \mathcal{X}(A^h,c)\to \mathcal{X}(\tilde{\Omega}^h,c)$ as
\begin{align*}
	E_{A^h}\vec{u}^h (x) &:= \begin{cases}
	\vec{u}^h(x), & x\in {A^h}, \\
	0, & x\in \tilde{\Omega}^h~\!\backslash~\! {A^h}.
	\end{cases}
\end{align*}
Furthermore, since $A^h$ is rectangular, we can write it as $A^h=A^h_y\times A^h_x$ with $A^h_x\subset \tilde{\Omega}^h_x$, $A^h_y\subset\tilde{\Omega}^h_y$.
Let the rectangular domain \( A^h \subset \tilde{\Omega}^h \) be written as
\(
A^h = \{ \vec{x}_{i,j} \in \tilde{\Omega}^h : i_1 \leq i \leq i_2,\ j_1 \leq j \leq j_2 \},
\)
where \( 1 \leq i_1 < i_2 \leq \tilde{N}_2 \) and \( 1 \leq j_1 < j_2 \leq \tilde{N}_1 \) determine the position and size of \( A^h \) within \( \tilde{\Omega}^h \). Based on this setup, we define the first vertical and horizontal 1-pixel-wide stripes immediately outside and adjacent to \( A^h \), namely to its right and below, as follows:
\[
S_r = \left\{ \vec{x}_{i,j} \in \tilde{\Omega}^h : i = i_2 + 1,\ j_1 \leq j \leq j_2 \right\}, \quad
S_b = \left\{ \vec{x}_{i,j} \in \tilde{\Omega}^h : i_1 \leq i \leq i_2,\ j = j_2 + 1 \right\}.
\]
Similarly we define the last vertical $1$-pixel-wide stripe within $A^h$ at its right boundary, and the last horizontal $1$-pixel-wide stripe within $A^h$ at its bottom, by 
\[
S_{r,-1} = \left\{ \vec{x}_{i,j} \in \tilde{\Omega}^h : i = i_2,\ j_1 \leq j \leq j_2 \right\}, \quad
S_{b,-1} = \left\{ \vec{x}_{i,j} \in \tilde{\Omega}^h : i_1 \leq i \leq i_2,\ j = j_2 \right\}.
\]

This allows us to define
\begin{align}
A^h_{+} &:= \begin{cases}
A^h&\text{~if~} i_2=\tilde{N}_2 \text{~and~} j_2=\tilde{N}_1\ (\text{i.e.,~} x_{\tilde{N}_2,\tilde{N}_1} \in A^h),\\ 
A^h\cup~\!S_{r}, &  \text{~if~} i_2=\tilde{N}_2 \text{~and~} j_2<\tilde{N}_1 ,\\ 
A^h\cup~\!S_{b}, &   \text{~if~} i_2<\tilde{N}_2 \text{~and~} j_2=\tilde{N}_1 ,\\ 
A^h\cup~\!S_{r}\cup~\!S_{b},
 &   \text{~if~} i_2<\tilde{N}_2 \text{~and~} j_2<\tilde{N}_1 ,\\
\end{cases}
\nonumber\\
A^h_{-} &:= \begin{cases}
A^h&\text{~if~} i_2=\tilde{N}_2 \text{~and~} j_2=\tilde{N}_1\\ 
A^h~\!\backslash~\!S_{r,-1}, &  \text{~if~} i_2=\tilde{N}_2 \text{~and~} j_2<\tilde{N}_1 ,\\ 
A^h~\!\backslash~\!S_{b,-1}, & \text{~if~} i_2<\tilde{N}_2 \text{~and~} j_2=\tilde{N}_1 ,\\ 
A^h~\!\backslash~\!S_{r,-1}~\!\backslash~\!S_{b,-1},
&  \text{~if~} i_2<\tilde{N}_2 \text{~and~} j_2<\tilde{N}_1 .\\
\end{cases}
\end{align}
These notations are introduced to better describe the behavior of the operators $\grad^h_{A^h}$ and $\div^h_{A^h}$, which turn out to be almost local when embedded in $\tilde{\Omega}^h$. In particular, one easily sees that 
\begin{align}\label{resDiv}
		\div^h_{\tilde{\Omega}^h}(E_{A^h_-}\vec{u}^h)
		=(E_{A^h}\div^h_{A^h}R_{A^h})(E_{A^h_-}\vec{u}^h)
\end{align}
for all $\vec{u}^h\in \mcX(A^h_-,2)$ and 
\begin{align}\label{resGrad}
		R_{A^h_-}\grad^h_{\tilde{\Omega}^h} d^h
		&=R_{A^h_-}(E_{A^h}\grad^h_{A^h}R_{A^h}) d^h
		\end{align}
for all $d^h\in \mcX(\tilde{\Omega}^h,1)$.

\subsubsection{Decomposing Linear Operators}
Our goal is to localize the action of global linear operators by formulating their restriction on smaller overlapping subdomains. In particular, we consider the projection operator $\mcP[K^h]^h$,  which is global by definition. To reduce memory consumption and enable efficient subdomain solvers within the domain decomposition method, we study how this operator acts on locally supported data. For this, we analyze the composition $(R_{\tilde{\Omega}^h_{m,+}}\mathcal{P}^h_{K^h}E_{\tilde{\Omega}^h_{k,+}})\vec{w}^h$ for $\vec{w}^h\in\mcX(\tilde{\Omega}^h_k,2)$ for all $k,m\in\{(1,1),...,(M_2,M_1)\}$. The reason we derive a formula for the extended domains $\tilde{\Omega}^h_{m,+}$ and $\tilde{\Omega}^h_{k,+}$ instead of $\tilde{\Omega}^h_m$ and $\tilde{\Omega}^h_k$ is 
due to the involved divergence and gradient operators in the context of subspace iterations (see \cref{alg:tfsparinnertfs,alg:tfsparinnertfsloc}).
Although $\mcP[K^h]^h$ itself is a global operator, we will show that its action on locally supported functions can be computed in a fully local manner. This observation forms the foundation for a memory-aware implementation of the domain decomposition scheme.

Let $c_1,c_2\in\{1,2\}$ and let $\mathcal{T}^h: \mcX(\tilde{\Omega}^h,c_1)\to\mcX(\tilde{\Omega}^h,c_2)$ be a linear operator. For fixed $k$ and $m$ we choose disjoint decompositions $(A^h_\kappa)_{\kappa=(1,1),...,(M_2,M_1)}$, $(\tilde{A}^h_\lambda)_{\lambda=(1,1),...,(M_2,M_1)}$ and $(B^h_\mu)_{\mu=(1,1),...,(M_2,M_1)}$ such that 
\begin{align}\label{conditionAB}
\tilde{\Omega}^h_{k,+}=A^h_{k,-},\qquad \tilde{\Omega}^h_{m,+}=B^h_{m,-}
\end{align}
and
\begin{align}\label{def:2DDs}
\tilde{\Omega}^h~=~\dot\bigcup_{\kappa=(1,1)}^{(M_2,M_1)}A^h_{\kappa}, 
\qquad
\tilde{\Omega}^h~=~\dot\bigcup_{\lambda=(1,1)}^{(M_2,M_1)}\tilde A^h_{\lambda},
\qquad
\tilde{\Omega}^h~=~\dot\bigcup_{\mu=(1,1)}^{(M_2,M_1)}B^h_{\mu},
\end{align}
where all $A^h_\kappa$, $\tilde A^h_{\lambda}$ and $B^h_\mu$ are rectangular, so that they can be represented as $A^h_\kappa=A^h_{y,\kappa_2}\times A^h_{x,\kappa_1}$, $\tilde A^h_\lambda=\tilde A^h_{y,\lambda_2}\times \tilde A^h_{x,\lambda_1}$ and $B^h_\mu=B^h_{y,\mu_2}\times B^h_{x,\mu_1}$.
We now decompose $\mathcal{T}^h$, which, by linearity, can be expressed in terms of its sub-operators associated with $(A_\kappa^h)_\kappa$ and $(B_\mu^h)_\mu$ by
\begin{equation}
\begin{split}
\mathcal{T}^h
&~ = ~ \mathcal{T}^h\left(\sum\limits_{\kappa=(1,1)}^{(M_2,M_1)} E_{A^h_{\kappa}} R_{A^h_{\kappa}}\right)
~ = ~ \sum\limits_{\mu=(1,1)}^{(M_2,M_1)} E_{B^h_\mu} R_{B^h_\mu}\mathcal{T}^h\left(\sum\limits_{\kappa=(1,1)}^{(M_2,M_1)} E_{A^h_{\kappa}} R_{A^h_{\kappa}}\right)\\
&~ = ~ \sum\limits_{\mu=(1,1)}^{(M_2,M_1)}\sum\limits_{\kappa=(1,1)}^{(M_2,M_1)}E_{B^h_\mu}\left(R_{B^h_\mu} \mathcal{T}^h E_{A^h_{\kappa}}\right) R_{A^h_{\kappa}}.
\end{split}
\end{equation}
This allows us to find a simple formula for the composition of two decomposed operators:
\begin{lemma}\label{lmChainOp}
	Let $c_1,c_2,c_3 \in\{1,2\}$ and let $\mathcal{T}^h :  \mcX(\tilde{\Omega}^h,c_1)\to\mcX(\tilde{\Omega}^h,c_2)$ and $\mathcal{U}^h : \mcX(\tilde{\Omega}^h,c_2)\to\mcX(\tilde{\Omega}^h,c_3)$ be linear operators and $\mathcal{U}^h \mathcal{T}^h : \mcX(\tilde{\Omega}^h,c_1)\to\mcX(\tilde{\Omega}^h,c_3)$ its composition.
Let	$(A_\kappa^h)_\kappa$, $(\tilde{A}^h_\lambda)_\lambda$ and $(B_\mu^h)_\mu$ be disjoint decompositions of $\tilde{\Omega}^h$, as in \eqref{def:2DDs}.
	 Then we have that
	\begin{align*}
	R_{B^h_\mu} (\mathcal{U}^h \mathcal{T}^h) E_{A^h_{\kappa}} 
	= \sum\limits_{\lambda=(1,1)}^{(M_2,M_1)} (R_{B^h_\mu} \mathcal{U}^h E_{\tilde{A}^h_\lambda}) (R_{\tilde{A}^h_\lambda}\mathcal{T}^h E_{A^h_\kappa}).
	\end{align*}
\end{lemma}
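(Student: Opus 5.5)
The plan is to insert a resolution of the identity between $\mathcal{U}^h$ and $\mathcal{T}^h$, built from the disjoint decomposition $(\tilde{A}^h_\lambda)_\lambda$. The key auxiliary fact is
\begin{align*}
\sum_{\lambda=(1,1)}^{(M_2,M_1)} E_{\tilde{A}^h_\lambda} R_{\tilde{A}^h_\lambda} = I^h \quad \text{on } \mcX(\tilde{\Omega}^h,c_2).
\end{align*}
I would verify this pointwise. Fix $\vec{v}^h\in\mcX(\tilde{\Omega}^h,c_2)$ and $x\in\tilde{\Omega}^h$. Since the sets $\tilde{A}^h_\lambda$ are pairwise disjoint and their union is $\tilde{\Omega}^h$ by \eqref{def:2DDs}, there is exactly one $\lambda_0$ with $x\in\tilde{A}^h_{\lambda_0}$. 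By the definitions of $R$ and $E$, we have $(E_{\tilde{A}^h_{\lambda_0}}R_{\tilde{A}^h_{\lambda_0}}\vec{v}^h)(x)=\vec{v}^h(x)$, while every other summand vanishes at $x$. This is the same identity the paper already used, for $(A^h_\kappa)_\kappa$ and $(B^h_\mu)_\mu$, in the expansion of $\mathcal{T}^h$ just before the lemma.

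With this in hand, the computation is direct. I would write
\begin{align*}
R_{B^h_\mu}(\mathcal{U}^h\mathcal{T}^h)E_{A^h_\kappa}
= R_{B^h_\mu}\,\mathcal{U}^h\Big(\sum_{\lambda} E_{\tilde{A}^h_\lambda}R_{\tilde{A}^h_\lambda}\Big)\mathcal{T}^h E_{A^h_\kappa}.
\end{align*}
Next I would use linearity of $\mathcal{U}^h$ and of $R_{B^h_\mu}$, which are linear maps on finite-dimensional spaces, to pull the finite sum outside. Regrouping by associativity of composition then gives
\begin{align*}
\sum_{\lambda}(R_{B^h_\mu}\mathcal{U}^hE_{\tilde{A}^h_\lambda})(R_{\tilde{A}^h_\lambda}\mathcal{T}^hE_{A^h_\kappa}),
\end{align*}
which is the claimed formula.

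There is no real obstacle here. The only point needing care is the partition-of-identity step, specifically that disjointness and covering of the $\tilde{A}^h_\lambda$ give each point exactly one contributing summand. I would also check that the vector dimension $c_2$ is handled componentwise by $E$ and $R$, which it is by their definition. Conditions \eqref{conditionAB} are not needed for this lemma.
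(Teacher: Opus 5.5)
Your proposal is correct and is essentially the paper's argument: the proof there inserts $\sum_{\lambda} E_{\tilde{A}^h_\lambda} R_{\tilde{A}^h_\lambda}$ between $\mathcal{U}^h$ and $\mathcal{T}^h E_{A^h_\kappa}$ and uses linearity to pull the sum out. Your pointwise check of the resolution of the identity, which the paper leaves implicit, and your remark that \eqref{conditionAB} plays no role here are both accurate.
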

\begin{proof}
A straightforward calculation shows 
	\begin{align*}
	 R_{B^h_\mu} (\mathcal{U}^h\mathcal{T}^h) E_{A^h_\kappa} = R_{B^h_\mu} \mathcal{U}^h \left(\sum\limits_{\lambda=(1,1)}^{(M_2,M_1)} E_{\tilde{A}^h_\lambda} R_{\tilde{A}^h_\lambda}(\mathcal{T}^h E_{A^h_\kappa})\right)
	= \sum\limits_{\lambda=(1,1)}^{(M_2,M_1)} (R_{B^h_\mu} \mathcal{U}^h E_{\tilde{A}^h_\lambda}) (R_{\tilde{A}^h_\lambda}\mathcal{T}^h E_{A^h_\kappa}).
	\end{align*}
	{\ }
\end{proof}

\paragraph{Formula for the discrete global projection on local domain}
We are now ready to fully localize the action of $R_{\tilde{\Omega}^h_{m,+}}\mcPh[K^h] E_{\tilde{\Omega}^h_{k,+}}$.
Let $k,m\in\{(1,1),...,(M_2,M_1)\}$ be fixed and $\vec{\tau}^h_{k,+}:=R_{\tilde{\Omega}^h_{k,+}}\vec{\tau}^h$. 
Using the notations from above and the representation \eqref{formula:PKh}  of $\mcPh[K^h]$, we obtain
\begin{equation}\label{locglobprojpart1}
\begin{split}
(R_{\tilde{\Omega}^h_{m,+}}&\mcPh[K^h] E_{\tilde{\Omega}^h_{k,+}})\vec{\tau}^h_{k,+}\\
& \underset{\eqref{formula:PKh}}{=} R_{\tilde{\Omega}^h_{m,+}}E_{\tilde{\Omega}^h_{k,+}}\vec{\tau}^h_{k,+}~-~R_{\tilde{\Omega}^h_{m,+}}\grad^h_{\tilde{\Omega}^h} (\Delta^h_{\tilde{\Omega}^h})^\dagger\div^h_{\tilde{\Omega}^h}(E_{\tilde{\Omega}^h_{k,+}}\vec{\tau}^h_{k,+})  \\
& \underset{\eqref{conditionAB}}{=} R_{\tilde{\Omega}^h_{m,+}}E_{\tilde{\Omega}^h_{k,+}}\vec{\tau}^h_{k,+}~-~R_{B^h_{m,-}}\grad^h_{\tilde{\Omega}^h}(\Delta^h_{\tilde{\Omega}^h})^\dagger\div^h_{\tilde{\Omega}^h}(E_{A^h_{k,-}}\vec{\tau}^h_{k,+})  \\
& \underset{(\ref{resDiv})}{=} R_{\tilde{\Omega}^h_{m,+}}E_{\tilde{\Omega}^h_{k,+}}\vec{\tau}^h_{k,+}~-~R_{B^h_{m,-}}\grad^h_{\tilde{\Omega}^h}(\Delta^h_{\tilde{\Omega}^h})^\dagger (E_{A^h_k}\div^h_{A^h_k}R_{A^h_k})(E_{A^h_{k,-}}\vec{\tau}^h_{k,+})  \\
& \underset{(\ref{resGrad})}{=} R_{\tilde{\Omega}^h_{m,+}}E_{\tilde{\Omega}^h_{k,+}}\vec{\tau}^h_{k,+}~-~R_{B^h_{m,-}}(E_{B^h_m}\grad^h_{B_m^h}R_{B_m^h})(\Delta^h_{\tilde{\Omega}^h})^\dagger (E_{A^h_k}\div^h_{A^h_k}R_{A^h_k})(E_{A^h_{k,-}}\vec{\tau}^h_{k,+}) \\
&~=~R_{\tilde{\Omega}^h_{m,+}}E_{\tilde{\Omega}^h_{k,+}}\vec{\tau}^h_{k,+}~-~R_{B^h_{m,-}}E_{B^h_m}\grad^h_{B_m^h}\big(R_{B_m^h}(\Delta^h_{\tilde{\Omega}^h})^\dagger E_{A^h_k}\big)\div^h_{A^h_k}R_{A^h_k}E_{A^h_{k,-}}\vec{\tau}^h_{k,+}.
\end{split}
\end{equation}
It remains to derive a formula for $R_{B_m^h}(\Delta^h_{\tilde{\Omega}^h})^\dagger E_{A^h_k}$.
Using \eqref{form:DiscInvLapl} and \cref{lmChainOp} with the decompositions $(A^h_\kappa)_\kappa$, $(\tilde{A}^h_\lambda)_\lambda$ and $(B^h_\mu)_\mu$, we derive 
\begin{equation}\label{locglobprojpart3}
\begin{split}
&\left(R_{B_m^h}(\Delta^h_{\tilde{\Omega}^h})^\dagger E_{A^h_k}\right) d_k^h\\
&\phantom{\hspace{2.5cm}} ~=~ R_{B_m^h}\left((\mathcal{C}^h_{\tilde{\Omega}^h})^{-1} (\tilde{\Delta}^h_{\tilde{\Omega}^h})^\dagger \mathcal{C}^h_{\tilde{\Omega}^h}\right)E_{A^h_k} d_k^h \\
&\phantom{\hspace{2.5cm}} ~=~ \sum\limits_{\lambda=(1,1)}^{(M_2,M_1)} ~\left(R_{B_m^h}\big((\mathcal{C}^h_{\tilde{\Omega}^h})^{-1} (\tilde{\Delta}^h_{\tilde{\Omega}^h})^\dagger\big) E_{\tilde{A}^h_\lambda}\right)\left(R_{\tilde{A}^h_\lambda}\mathcal{C}^h_{\tilde{\Omega}^h}E_{A^h_k}\right) d_k^h\\
&\phantom{\hspace{2.5cm}} ~=~ \sum\limits_{\lambda'=(1,1)}^{(M_2,M_1)}\sum\limits_{\lambda=(1,1)}^{(M_2,M_1)} \left(R_{B_m^h}(\mathcal{C}^h_{\tilde{\Omega}^h})^{-1}E_{{\tilde{A}}^h_{\lambda'}}\right)  \left(R_{{\tilde{A}}^h_{\lambda'}}(\tilde{\Delta}^h_{\tilde{\Omega}^h})^\dagger E_{\tilde{{A}}^h_{\lambda}}\right) \left(R_{\tilde{{A}}^h_{\lambda}}\mathcal{C}^h_{\tilde{\Omega}^h}E_{A^h_k}\right) d_k^h \\
&\phantom{\hspace{2.5cm}} ~=~ \sum\limits_{\lambda=(1,1)}^{(M_2,M_1)}  \left(R_{B_m^h}(\mathcal{C}^h_{\tilde{\Omega}^h})^{-1}E_{{\tilde{A}}^h_{\lambda}}\right)  \left(R_{{\tilde{A}}^h_{\lambda}}(\tilde{\Delta}^h_{\tilde{\Omega}^h})^\dagger E_{\tilde{{A}}^h_{\lambda}}\right) \left(R_{\tilde{{A}}^h_{\lambda}}\mathcal{C}^h_{\tilde{\Omega}^h}E_{A^h_k}\right) d_k^h.
\end{split}
\end{equation}
for all $d_k^h\in\mcX(A_k^h,1)$.
Note that $R_{{\tilde{A}}^h_{\lambda'}}(\tilde{\Delta}^h_{\tilde{\Omega}^h})^\dagger E_{\tilde{{A}}^h_{\lambda}}=0$ for $\lambda\neq\lambda'$, since $(\tilde{\Delta}^h_{\tilde{\Omega}^h})^\dagger$ is a completely local operator; see \eqref{form:DiscInvLaplTild}. 

Since $m=(m_2,m_1)$, $\lambda=(\lambda_2,\lambda_1)$ and $k=(k_2,k_1)$ are double indices, a more precise, expanded notation can be derived:
Remember that from (\ref{eq:DCT2D}) and (\ref{eq:DCT2Dinv}), we can represent the 2D-DCT as
\begin{align*}
\mathcal{C}^h_{\tilde{\Omega}^h} d^h &= C_{\tilde{N}_2}~ d^h~ C_{\tilde{N}_1}^T, \qquad
(\mathcal{C}^h_{\tilde{\Omega}^h})^{-1}d^h = C_{\tilde{N}_2}^T~ d^h~ C_{\tilde{N}_1},
\end{align*}
for any $d^h \in \mcX(\Omega^h,1)$
with $C_{\tilde{N}_2}\in\R^{\tilde{N}_2\times \tilde{N}_2}$ and $C_{\tilde{N}_1}\in\R^{\tilde{N}_1\times \tilde{N}_1}$. 
Note that the multiplication with $C_{\tilde{N}_1}^T$ corresponds to a 1D-DCT in $x$-direction and the multiplication with $C_{\tilde{N}_2}$ corresponds to a 1D-DCT in $y$-direction. 
We can localize $\mathcal{C}^h_{\tilde{\Omega}^h}$ and $(\mathcal{C}^h_{\tilde{\Omega}^h})^{-1}$ by only evaluating the part of $d^h$ that we need. 
So, for instance if we want to apply $\mathcal{C}^h_{\tilde{\Omega}^h}$ on a local $d_k^h\in\mcX(A_k^h,1)$ and restrict the result on $\tilde{A}^h_\lambda$, it is enough to evaluate the corresponding matrix blocks $[C_{\tilde{N}_1}]^{k_1}_{\lambda_1}\in\R^{(\#\tilde{A}^h_{x,\lambda_1})\times(\#A^h_{x,k_1})}$ within $C_{\tilde{N}_1}$ and $[C_{\tilde{N}_2}]^{k_2}_{\lambda_2}\in\R^{(\#\tilde{A}^h_{y,\lambda_2})\times(\#A^h_{y,k_2})}$ within $C_{\tilde{N}_2}$ (see Figure \ref{fig:matvisualization} for a visualization of the blocks).

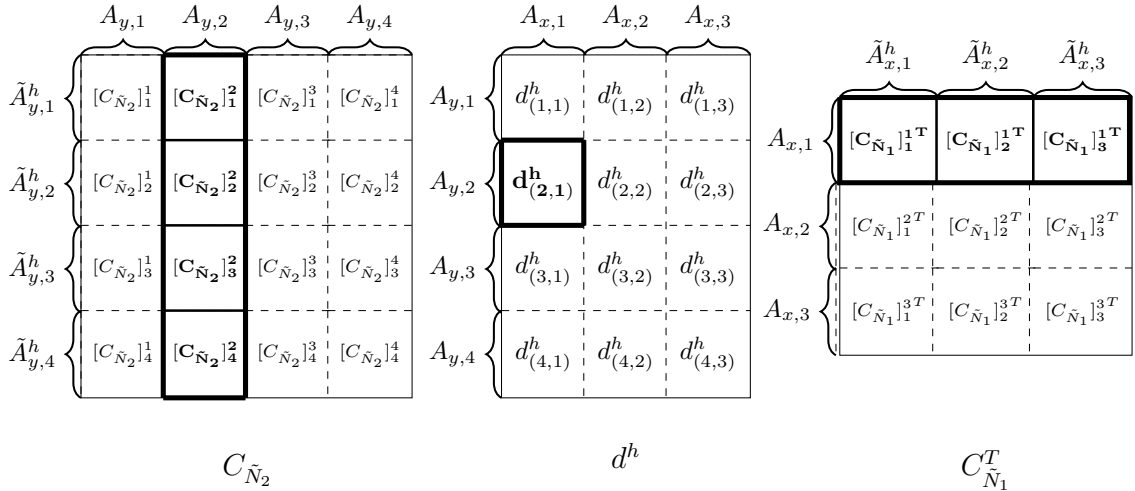
\begin{figure}[h]
\begin{center}
\begin{adjustbox}{max width=\textwidth}
\begin{tikzpicture}
\matrix (m) [matrix of nodes,
nodes in empty cells,
nodes={minimum width=1.15cm, minimum height=1.2cm, anchor=center},
column sep=-\pgflinewidth, row sep=-\pgflinewidth] 
{
	$\scriptstyle{[C_{\tilde N_2}]_1^{1}}$ & $\mathbf{\scriptstyle{[C_{\tilde N_2}]_1^{2}}}$ & $\scriptstyle{[C_{\tilde N_2}]_1^{3}}$ & $\scriptstyle{[C_{\tilde N_2}]_1^{4}}$ \\
	$\scriptstyle{[C_{\tilde N_2}]_2^{1}}$ & $\mathbf{\scriptstyle{[C_{\tilde N_2}]_2^{2}}}$ & $\scriptstyle{[C_{\tilde N_2}]_2^{3}}$ & $\scriptstyle{[C_{\tilde N_2}]_2^{4}}$ \\
	$\scriptstyle{[C_{\tilde N_2}]_3^{1}}$ & $\mathbf{\scriptstyle{[C_{\tilde N_2}]_3^{2}}}$ & $\scriptstyle{[C_{\tilde N_2}]_3^{3}}$ & $\scriptstyle{[C_{\tilde N_2}]_3^{4}}$ \\
	$\scriptstyle{[C_{\tilde N_2}]_4^{1}}$ & $\mathbf{\scriptstyle{[C_{\tilde N_2}]_4^{2}}}$ & $\scriptstyle{[C_{\tilde N_2}]_4^{3}}$ & $\scriptstyle{[C_{\tilde N_2}]_4^{4}}$\\
};

\draw(m-1-1.north west) rectangle (m-4-4.south east);
\draw[line width=2.0pt](m-1-2.north west) -- (m-1-3.north west);
\draw[line width=2.0pt](m-4-2.south west) -- (m-4-3.south west);
\draw[line width=2.0pt](m-1-2.north west) -- (m-4-2.south west);
\draw[line width=2.0pt](m-1-3.north west) -- (m-4-3.south west);
\draw[line width=1pt](m-2-2.north west) -- (m-2-3.north west);
\draw[line width=1pt](m-3-2.north west) -- (m-3-3.north west);
\draw[line width=1pt](m-4-2.north west) -- (m-4-3.north west);
\foreach \i in {1,...,4} {
	\foreach \j in {1,...,4} {
		\draw[dashed](m-\i-\j.north west) -- (m-\i-\j.south west);
		\draw[dashed](m-\i-\j.north west) -- (m-\i-\j.north east);
	}
}

\foreach \i/\label in {1/$\tilde A_{y,1}^h$, 2/$\tilde A_{y,2}^h$, 3/$\tilde A_{y,3}^h$, 4/$\tilde A_{y,4}^h$}{
	\draw[decorate, decoration={brace, amplitude=6pt, mirror}, thick]
	([yshift=0.0cm]m-\i-1.north west) -- ([yshift=0.0cm]m-\i-1.south west)
	node[midway,left=6pt] {\label};
}

\foreach \j/\label in {1/$A_{y,1}$, 2/$A_{y,2}$, 3/$A_{y,3}$, 4/$A_{y,4}$} {
	\draw[decorate, decoration={brace, amplitude=6pt}, thick]
	([xshift=0.0cm]m-1-\j.north west) -- ([xshift=0.0cm]m-1-\j.north east)
	node[midway,above=6pt] {\label};
}

\node[below=1.0cm of m-4-2.south east, anchor=center, font=\large] (f)
{$C_{\tilde N_2}$};

\matrix (n) [matrix of nodes,
nodes in empty cells,
nodes={minimum width=1.15cm, minimum height=1.2cm, anchor=center},
column sep=-\pgflinewidth, row sep=-\pgflinewidth,
right=1cm of m] 
{
	$d^h_{(1,1)}$ & $d^h_{(1,2)}$ & $d^h_{(1,3)}$\\
	$\mathbf{d^h_{(2,1)}}$ & $d^h_{(2,2)}$ & $d^h_{(2,3)}$\\
	$d^h_{(3,1)}$ & $d^h_{(3,2)}$& $d^h_{(3,3)}$ \\
	$d^h_{(4,1)}$ & $d^h_{(4,2)}$& $d^h_{(4,3)}$ \\
};

\draw(n-1-1.north west) rectangle (n-4-3.south east);
\draw[line width=2.0pt](n-2-1.north west) -- (n-3-1.north west);
\draw[line width=2.0pt](n-2-1.north west) -- (n-2-2.north west);
\draw[line width=2.0pt](n-3-1.north west) -- (n-3-2.north west);
\draw[line width=2.0pt](n-2-2.north west) -- (n-3-2.north west);
\foreach \i in {2,...,4} {
	\draw[dashed](n-\i-1.north west) -- (n-\i-3.north east);
}
\foreach \j in {2,...,3} {
	\draw[dashed](n-1-\j.north west) -- (n-4-\j.south west);
}

\foreach \i/\label in {1/$A_{y,1}$, 2/$A_{y,2}$, 3/$A_{y,3}$, 4/$A_{y,4}$} {
	\draw[decorate, decoration={brace, amplitude=6pt, mirror}, thick]
	([yshift=0.0cm]n-\i-1.north west) -- ([yshift=0.0cm]n-\i-1.south west)
	node[midway,left=6pt] {\label};
}

\foreach \j/\label in {1/$A_{x,1}$, 2/$A_{x,2}$,3/$A_{x,3}$} {
	\draw[decorate, decoration={brace, amplitude=6pt}, thick]
	([xshift=0.0cm]n-1-\j.north west) -- ([xshift=0.0cm]n-1-\j.north east)
	node[midway,above=6pt] {\label};
}

\node[below=0.8cm of n-4-2.south, anchor=center, font=\large] (f)
{$d^h$};


\matrix (o) [matrix of nodes,
nodes in empty cells,
nodes={minimum width=1.35cm, minimum height=1.2cm, anchor=center},
column sep=-\pgflinewidth, row sep=-\pgflinewidth,
right=1cm of n] 
{
	$\mathbf{\scriptstyle{[C_{\tilde N_1}]^1_1}^T} $ & $\mathbf{\scriptstyle{[C_{\tilde N_1}]^1_2}^T} $ & $\mathbf{\scriptstyle{[C_{\tilde N_1}]^1_3}^T} $\\
	$\scriptstyle{[C_{\tilde N_1}]^2_1}^T $ & $\scriptstyle{[C_{\tilde N_1}]^2_2}^T $ & $\scriptstyle{[C_{\tilde N_1}]^2_3}^T $\\
	$\scriptstyle{[C_{\tilde N_1}]^3_1}^T $ & $\scriptstyle{[C_{\tilde N_1}]^3_2}^T $ & $\scriptstyle{[C_{\tilde N_1}]^3_3}^T $\\
};

\draw(o-1-1.north west) rectangle (o-3-3.south east);
\draw[line width=2.0pt](o-1-1.north west) -- (o-1-3.north east);
\draw[line width=2.0pt]([xshift=-1pt]o-2-1.north west) -- (o-2-3.north east);
\draw[line width=2.0pt](o-1-1.north west) -- (o-1-1.south west);
\draw[line width=2.0pt]([xshift=-1pt]o-1-3.north east) -- ([xshift=-1pt]o-1-3.south east);
\draw[line width=1pt](o-1-2.north west) -- (o-1-2.south west);
\draw[line width=1pt](o-1-3.north west) -- (o-1-3.south west);
\foreach \i in {1} {
	\foreach \j in {1,...,3} {
		\draw[dashed](o-\i-\j.north west) -- (o-\i-\j.south west);
		}
		}
\foreach \i in {2,...,3} {
	\foreach \j in {1,...,3} {
		\draw[dashed]([xshift=-1.5pt]o-\i-\j.north west) -- ([xshift=-1.5pt]o-\i-\j.south west);
		\draw[dashed](o-\i-\j.north west) -- (o-\i-\j.north east);
	}
}

\foreach \i/\label in {1/$A_{x,1}$} {
	\draw[decorate, decoration={brace, amplitude=6pt, mirror}, thick]
	([yshift=0.0cm]o-\i-1.north west) -- ([yshift=0.0cm]o-\i-1.south west)
	node[midway,left=6pt] {\label};
}
\foreach \i/\label in {2/$A_{x,2}$, 3/$A_{x,3}$} {
	\draw[decorate, decoration={brace, amplitude=6pt, mirror}, thick]
	([yshift=0.0cm,xshift=-1.5pt]o-\i-1.north west) -- ([yshift=0.0cm,xshift=-1.5pt]o-\i-1.south west)
	node[midway,left=6pt] {\label};
}

\foreach \j/\label in {1/$\tilde A_{x,1}^h$, 2/$\tilde A_{x,2}^h$, 3/$\tilde A_{x,3}^h$} {
	\draw[decorate, decoration={brace, amplitude=6pt}, thick]
	([xshift=0.0cm]o-1-\j.north west) -- ([xshift=0.0cm]o-1-\j.north east)
	node[midway,above=6pt] {\label};
}

\node[below=1.6cm of o-3-2.south, anchor=center, font=\large] (f)
{$C_{\tilde N_1}^T$};
\end{tikzpicture}
\end{adjustbox}
\end{center}
\caption{Visualization of $[C_{\tilde N_2}]_{\lambda_2}^{2} d^h_{(2,1)}\big([C_{\tilde N_1}]_{\lambda_1}^{1}\big)^T$ for all $\lambda=(\lambda_2,\lambda_1)=(1,1),...,(4,3)$ within the bigger matrix $C_{\tilde N_2} d^hC_{\tilde N_1}^T$ for $M_1=3$, $M_2=4$.}\label{fig:matvisualization}
\end{figure}

Equivalently, if we want to apply $(\mathcal{C}^{-1}_{\tilde{\Omega}^h})^h$ on a local $d_\lambda^h\in\mcX(A_\lambda^h,1)$ and restrict the result on $B^h_m$, it is enough to evaluate the corresponding matrix blocks $[C_{\tilde{N}_1}]^{\lambda_1}_{m_1}\in\R^{(\#B^h_{x,m_1})\times(\#\tilde{A}^h_{x,\lambda_1})}$ within $C_{\tilde{N}_1}$ and  $[C_{\tilde{N}_2}]^{\lambda_2}_{m_2}\in\R^{(\#B^h_{y,m_2})\times(\#\tilde{A}^h_{y,\lambda_2})}$ within $C_{\tilde{N}_2}$.
So, for the local operations $R_{\tilde{{A}}^h_{\lambda}}\mathcal{C}^h_{\tilde{\Omega}^h}E_{A^h_k}$ and $R_{B_m^h}(\mathcal{C}^h_{\tilde{\Omega}^h})^{-1}E_{{\tilde{A}}^h_{\lambda}}$ we get
\begin{align*}
&\left(R_{\tilde{{A}}^h_{\lambda}}\mathcal{C}^h_{\tilde{\Omega}^h}E_{A^h_k}\right)d_k^h=[C_{\tilde{N}_2}]^{k_2}_{\lambda_2}~d_k^h~ ([C_{\tilde{N}_1}]^{k_1}_{\lambda_1})^T,\\
&\left( R_{B_m^h}(\mathcal{C}^h_{\tilde{\Omega}^h})^{-1}E_{{\tilde{A}}^h_{\lambda}}\right)d_\lambda^h=([C_{\tilde{N}_2}]^{\lambda_2}_{m_2})^T~d_\lambda^h~[C_{\tilde{N}_1}]^{\lambda_1}_{m_1},
\end{align*}
for all $d_k^h\in\mcX(A_k^h,1)$ and $d_\lambda^h\in\mcX(\tilde{A}_\lambda^h,1)$.
Inserting this into \eqref{locglobprojpart3} 
yields the matrix formula
\begin{equation}\label{invLaplLowCapLong}
\begin{split}
&\left(R_{B_m^h}(\Delta^h_{\tilde{\Omega}^h})^\dagger E_{A^h_k}\right) d_k^h\\
&\phantom{\hspace{2.5cm}}=\sum\limits_{\lambda_2=1}^{M_2}\sum\limits_{\lambda_1=1}^{M_1} ([C_{\tilde{N}_2}]^{\lambda_2}_{m_2})^T 
\left(R_{{\tilde{A}}^h_{\lambda}}(\tilde{\Delta}^h_{\tilde{\Omega}^h})^\dagger E_{\tilde{{A}}^h_{\lambda}}\right)
\Big([C_{\tilde{N}_2}]^{k_2}_{\lambda_2}~d_k^h~ ([C_{\tilde{N}_1}]^{k_1}_{\lambda_1})^T\Big)
[C_{\tilde{N}_1}]^{\lambda_1}_{m_1}
\end{split}
\end{equation}
for all $d_k^h\in\mcX(A_k^h,1)$.

With formula (\ref{invLaplLowCapLong}), the inverse laplacian of an image block $d_k^h$ can be evaluated with never needing more access memory than a constant times the biggest block. Furthermore, the summands are completely independent from each other and can be computed parallely without any communication between the threads (except distributing $d_k^h$ on the threads and collecting the result on the main thread to sum the result up).

To limit random access memory required per thread, the decompositions $(A^h_\kappa)_\kappa$, $(\tilde{A}^h_\lambda)_\lambda$ and $(B^h_\mu)_\mu$ should be chosen carefully.  In particular, it is reasonable to select them as uniformly sized as possible for all $k,m\in\{(1,1),...,(M_2,M_1)\}$, provided that condition~\eqref{conditionAB} remains satisfied.

The primary computational bottleneck lies in the evaluation of the individual blocks of the DCT, which could potentially be optimized using strategies analogous to those employed in fast implementations of the full Discrete Cosine Transform (Fast DCT).

\subsubsection*{Localized Algorithm}

Applying \cref{alg:tfsparinner} to the TFS step, with $\Lambda^h := \mcP[K^h]^h \vecdiv^h_{\tilde{\Omega}^h}$ and $\vec{f}^h:=\delta^{-1} \mathcal{P}^h_{K^h}\vec{\tau}^h_0$, we receive \cref{alg:tfsparinnertfs}.

\begin{algorithm}[htbp]
	\newcommand{\Break}{\State \textbf{break} }
	\caption{Discrete parallel DD: Inner loop for Tangent Field Smoothing}
	\label{alg:tfsparinnertfs}
	\begin{algorithmic}
		\Require $m\in\{(1,1),...,(M_2,M_1)\}, \vec{p}^{h,n},\vec{\hat{q}}_m^{h,n}\in\mcX(\tilde\Omega^h,2\times 2), \vec{f}^h\in\mcX(\tilde{\Omega}^h,2), t\in(0,\frac{1}{8}]$
			\State $\vec{v}^{h,0}\gets \vec{\hat{q}}_m^{h,n}$
			\State $\vec\omega^{h,0}\gets \vec{f}^h -\mathcal{P}^h_{K^h}\vecdiv^h_{\tilde{\Omega}^h}\sum\limits_{l\neq m}\theta^h_l\vec{p}^{h,n}$
			\For{$\nu = 0,1,...,max\_inner\_it$}
			\State $\vec\psi^{h,\nu} = (\vec\psi^{h,\nu}_1,\vec\psi^{h,\nu}_2) \gets \vecgrad^h_{\tilde{\Omega}^h}\left(\mathcal{P}^h_{K^h} \vecdiv^h_{\tilde{\Omega}^h}\vec{v}^{h,\nu}-\vec\omega^{h,0}\right)$
			\State
			$\vec{v}^{h,\nu+1} 
			= \left(\vec{v}_1^{h,\nu+1},\vec{v}_2^{h,\nu+1}\right)
			\gets \left(\dfrac{\theta_m^h \vec{v}_1^{h,\nu}+t\theta_m^h \vec\psi_1^{h,\nu}}{\theta_m^h+t|\vec\psi_1^{h,\nu}|},
			\dfrac{\theta_m^h\vec{v}_2^{h,\nu}+
			t\theta_m^h\vec\psi_2^{h,\nu}}{\theta_m^h+t|\vec\psi_2^{h,\nu}|}
			\right)$
			\If{inner\_stop\_criteria}
			\State $\vec{\hat{q}}_m^{h,n+1} \gets \vec{v}^{h,\nu+1}$
			\Break
			\EndIf
			\EndFor
		\State\Return $\vec{\hat{q}}_m^{h,n+1}$
	\end{algorithmic}
\end{algorithm}
Note that we introduced $\vec\omega^{h,0}\in\mcX(\tilde\Omega^h, 2)$ to avoid repeatedly computing $\mathcal{P}^h_{K^h}\vecdiv^h_{\tilde{\Omega}^h}\sum\limits_{l\neq m}\theta^h_l\vec{p}^{h,n}$ in every inner iteration step. This will be particularly useful in the following, where we localize the inner iteration entirely.
Building on the discussion above, \cref{alg:tfsparinnertfs} can now be
reformulated to rely solely on local operations; see \cref{alg:tfsparinnertfsloc}. 
To do so, we examine how global quantities can be replaced by their localized counterparts without loss of correctness. When transitioning from $\vec{v}^{h,\nu+1}\in\mcX(\tilde\Omega^h, 2\times 2)$ to the localized version $\vec{v}_{\loc}^{h,\nu+1}\in\mcX(\tilde\Omega_m^h, 2\times 2)$, we exploit the fact that in each iteration $\vec{v}^{h,\nu+1}$ is multiplied by $\theta_m^h$, rendering all values of $\vec{v}^{h,\nu+1}$ and $\vec{\psi}^{h,\nu}$ outside $\tilde{\Omega}_m^h$ redundant. 
Consequently, we can initialize $\vec{v}^{h,0}$ such that $\supp(\vec{v}^{h,0})\subseteq\tilde\Omega^h_m$, which implies $\supp(\vec{v}^{h,\nu})\subseteq\tilde\Omega^h_m$ for all $\nu \in\N$. 
In \cref{alg:tfsparinnertfsloc}, this is achieved by setting $\vec{v}_{\loc}^{h,0} = R_{\tilde{\Omega}^h_m}\vec{\hat{q}}_m^{h,n}$ . Since $\supp(\vec{v}_{\loc}^{h,\nu})\subseteq\tilde\Omega^h_m$, the non-zero entries of $\vecdiv^h_{\tilde{\Omega}^h}\vec{v}_{\loc}^{h,\nu}$ lie entirely within the extended subdomain $\tilde\Omega^h_{m,+}$. 
The fact that
\begin{equation*}
R_{\tilde{\Omega}^h_m} \vecgrad^h_{\tilde{\Omega}^h} \vec{v} = R_{\tilde{\Omega}^h_m} E_{\tilde{\Omega}_{m,+}^h}\vecgrad^h_{\tilde{\Omega}^h_{m,+}}R_{\tilde{\Omega}^h_{m,+}} \vec{v}
\end{equation*}
for any $\vec{v}\in \mcX(\tilde{\Omega}^h,2)$, and
\begin{equation*}
\vecdiv^h_{\tilde{\Omega}^h} E_{\tilde{\Omega}^h_m} \vec{v}_{\loc} = \vecdiv^h_{\tilde{\Omega}^h_{m,+}}R_{\tilde{\Omega}^h_{m,+}}E_{\tilde{\Omega}^h_{m}}\vec{v}_{\loc}
\end{equation*}
for any $\vec{v}_{\loc} \in \mcX(\tilde{\Omega}^h_m,2\times 2)$, together with the local computation of $\mathcal{P}^h_{K^h}$ on $\tilde\Omega^h_{m,+}$, which corresponds to $R_{\tilde{\Omega}^h_{m,+}}\mathcal{P}^h_{K^h} E_{\tilde{\Omega}^h_{m,+}}$ and can be computed via formulas \eqref{locglobprojpart1} and \eqref{invLaplLowCapLong}, yields the localized update of $\vec\psi_{\loc}^{h,\nu}$ in \cref{alg:tfsparinnertfsloc}. 
Moreover, we only require the localized quantity $\vec\omega^{h,0}_{\loc}:= R_{\tilde{\Omega}^h_{m,+}}\vec\omega^{h,0}$ instead of the global $\vec\omega^{h,0}$, since $\vec\psi^{h,\nu}$ is multiplied by $\theta_m^h$ in each iteration, rendering all entries of $\vec\omega^{h,0}$ outside $\tilde\Omega_m^h$ redundant.

\begin{algorithm}[htbp]
	\newcommand{\Break}{\State \textbf{break} }
	\caption{Discrete parallel DD: Localized inner loop for Tangent Field Smoothing}
	\label{alg:tfsparinnertfsloc}
	\begin{algorithmic}
		\Require $m\in\{(1,1),...,(M_2,M_1)\},~\vec{p}^{h,n},~ \vec{\hat{q}}_m^{h,n}\in\mcX(\tilde\Omega^h,2\times 2),~ \vec{f}^h\in\mcX(\tilde{\Omega}^h,2), ~t\in(0,\frac{1}{8}]$
		\State $\vec{v}_{\loc}^{h,0}\gets R_{\tilde{\Omega}^h_m}\vec{\hat{q}}_m^{h,n}$
		\State $\vec\omega^{h,0}_{\loc}\gets R_{\tilde{\Omega}^h_{m,+}}\left(\vec{f}^h -\mathcal{P}^h_{K^h}\vecdiv^h_{\tilde{\Omega}^h}\sum\limits_{l\neq m}\theta^h_l\vec{p}^{h,n}\right)$
		\For{$\nu = 0,1,...,max\_inner\_it$}
		\State $\vec\psi_{\loc}^{h,\nu} \gets R_{\tilde{\Omega}^h_m}E_{\tilde\Omega^h_{m,+}}\vecgrad^h_{\tilde{\Omega}^h_{m,+}}\left(R_{\tilde{\Omega}^h_{m,+}}\mathcal{P}^h_{K^h} E_{\tilde{\Omega}^h_{m,+}}\vecdiv^h_{\tilde{\Omega}^h_{m,+}}R_{\tilde{\Omega}^h_{m,+}}E_{\tilde{\Omega}^h_{m}}\vec{v}_{\loc}^{h,\nu}-\vec\omega_{\loc}^{h,0}\right)$, \\
		~~~~~~~~~~~~~~~where 
		$R_{\tilde{\Omega}^h_{m,+}}\mathcal{P}^h_{K^h} E_{\tilde{\Omega}^h_{m,+}}$ is computed via (\ref{locglobprojpart1}) and (\ref{invLaplLowCapLong})
		\State
		$\vec{v}_{\loc}^{h,\nu+1} 
		= \left(\vec{v}_{\loc,1}^{h,\nu+1},\vec{v}_{\loc,2}^{h,\nu+1}\right) 
		\gets \left(\dfrac{R_\theta\vec{v}_{\loc,1}^{h,\nu}+t(R_\theta) \vec\psi_{\loc,1}^{h,\nu}}{R_\theta+t|\vec\psi_{\loc,1}^{h,\nu}|},
		\dfrac{R_\theta\vec{v}_{\loc,2}^{h,\nu}+t R_\theta \vec\psi_{\loc,2}^{h,\nu}}{R_\theta+t|\vec\psi_{\loc,2}^{h,\nu}|}
		\right)$\\
		~~~~~~~~~~~~~~~where $R_\theta := R_{\tilde{\Omega}_m^h} \theta_m^h$
		\If{inner\_stop\_criteria}
		\State $\vec{\hat{q}}_m^{h,n+1} \gets E_{\tilde{\Omega}^h_m}\vec{v}_{\loc}^{h,\nu+1}$
		\Break
		\EndIf
		\EndFor
		\State\Return $\vec{\hat{q}}_m^{h,n+1}$
	\end{algorithmic}
\end{algorithm}

In summary, each operation within the inner loop is confined to the extended subdomain
 $\Omega^h_{m,+}$, implying that we have formulated a fully local algorithm, even for the TFS step in \cref{alg:disalgsubsp}.
However, it is worth noting that the initialization of $\vec\omega_{\loc}^{h,0}$ in \cref{alg:tfsparinnertfsloc} may still be computationally expensive, as it involves the global operator $\mcP[K^h]^h$. 
To preserve the locality of the overall algorithm, this projection is computed in 
localized pieces $R_{\Omega^h_m}\mathcal{P}^h_{K^h}E_{\Omega^h_k}$, using formulas \eqref{locglobprojpart1} and \eqref{invLaplLowCapLong}. 
These partial results can be communicated between threads and subsequently combined. Importantly, this global projection needs to be computed only once prior to the iteration process.

\subsection{Numerical Validation}\label{sec:DDNumVal}
We implemented our domain decomposition (DD) approach of the TV-Stokes model using multi-threading, where each thread processes only a local portion of the image. We evaluated this DD strategy for both image reconstruction variants (IRV1 and IRV2) and compared the results to the standard, non-decomposed TV-Stokes model (referred to as non-DD).
Our experiments were conducted on Image 10 (see \cref{fig:beach_denoising_comparison}), corrupted with medium-level Gaussian noise ($\sigma^2 = 0.01$) added to the ground truth. For the DD implementation, we employed a $3 \times 3$ domain decomposition ($M_1 = M_2 = 3$), with overlaps of 3 pixels in the $x$-direction and 4 pixels in the $y$-direction. We applied a coloring scheme across the overlapping domains which allowed us to use $\widehat{\alpha} = 0.25$ as weight parameter.

The regularization parameters were set as follows: $\delta = 0.15$ for TFS, $\mu = 0.1$ for IRV1, and $\alpha = 10.0$ for IRV2. In IRV1, the field $\vec\xi^h$ was computed from the tangent field $\vec\tau^h$ using $\epsilon = 0.001$. All optimization problems were solved using Chambolle's algorithm with a step size $t = 0.125$.

To verify numerical consistency between the DD and non-DD approaches, we used the non-DD TV-Stokes model with $10^6$ iterations per step (i.e., TFS, IRV1, and IRV2) to compute reference solutions and energies. 
In the DD setting, we performed 10 inner iterations ($max\_inner\_it$) and 5\;000 outer iterations ($max\_it$) per step, or terminated earlier if $\frac{1}{|\Gamma^h|}|\mcD^h(\vec p^{h,n})^2-\mcD^h(\vec p^{h,n+1})^2|<10^{-10}$, where $\Gamma^h$ is either $\tilde{\Omega}^h$ or $\Omega^h$, and $\mcD^h$ is either $\mcD^h_{\mathrm{TFS}}$, $\mcD^h_{\mathrm{IRV1}}$, or $\mcD^h_{\mathrm{IRV2}}$ depending on the step and variant. 
In \cref{fig:dd_energy}, we compared the energy progress of the reference solution and the DD-solution.
It can be seen that for TFS, the DD-algorithm clearly converges against the reference solution, demonstrating the suitability and correctness of our localized DD approach described in \cref{Sec:LocalSubspaceIteration}. 

When evaluating the second step of the TV-Stokes model, we recall that errors propagate from Step~1 to Step~2; cf.\ \cref{sec:ComparisonIR}. In particular, if we use the solution of the DD approach, which does not match the reference solution, the DD iterations of Step~2, i.e., IRV1 and IRV2, converge but not to the reference energy; see solid blue curve in \cref{fig:dd_energy} (b) and (c). This is because a perturbed objective is minimized as a result of error propagation.  
Despite this, when examining the reconstructions in \cref{fig:beach_denoising_comparison}, the DD results appear visually hardly distinguishable from the non-DD reconstructions.
On the other hand, if we use the reference tangent field in Step~2, then for both variants the DD-algorithm converges to the reference energy as expected; see the dashed orange curve in \cref{fig:dd_energy} (b) and (c).

\begin{figure}[htbp]
  \centering

  \subfloat[Ground Truth, Image~10\label{fig:beach_gt}]{
    \includegraphics[width=0.48\linewidth]{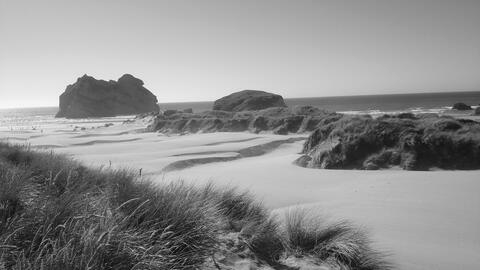}
  }\hfill
  \subfloat[Noisy image, $\sigma^2=0.01$\label{fig:beach_noisy_001}]{
    \includegraphics[width=0.48\linewidth]{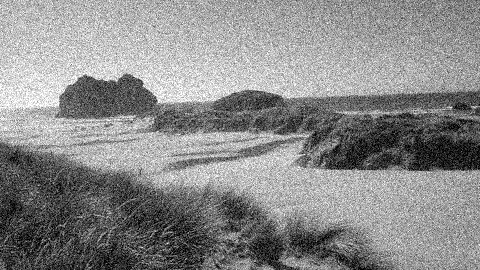}
  }\\[1ex]

  \subfloat[Denoised with IRV1, non-DD\label{fig:beach_denoised_v1}]{
    \includegraphics[width=0.48\linewidth]{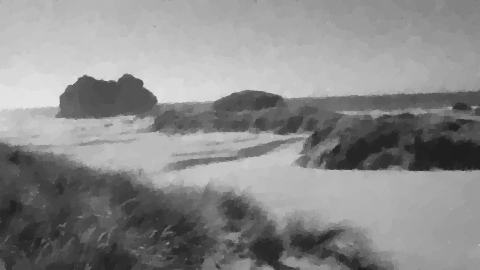}
  }\hfill
  \subfloat[Denoised with IRV1, DD\label{fig:beach_denoised_v1_dd}]{
    \includegraphics[width=0.48\linewidth]{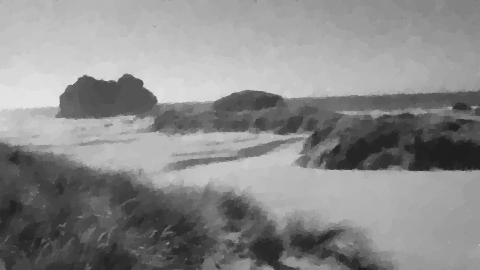}
  }\\[1ex]

  \subfloat[Denoised with IRV2, non-DD\label{fig:beach_denoised_v2}]{
    \includegraphics[width=0.48\linewidth]{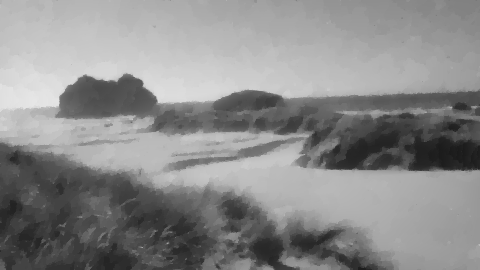}
  }\hfill
  \subfloat[Denoised with IRV2, DD\label{fig:beach_denoised_v2_dd}]{
    \includegraphics[width=0.48\linewidth]{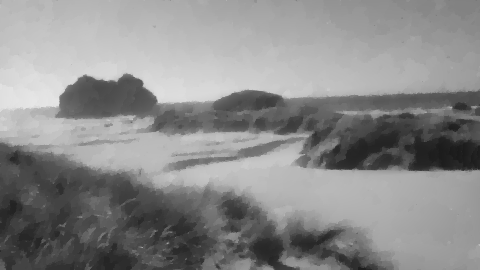}
  }

  \caption{Denoising comparison non-DD vs.\ DD for Image~10}
  \label{fig:beach_denoising_comparison}
\end{figure}

\begin{figure}[htbp]
  \centering

  \subfloat[TFS, Energy development DD, Image~10\label{fig:dd_tfs_energy}]{
    \includegraphics[width=0.48\linewidth]{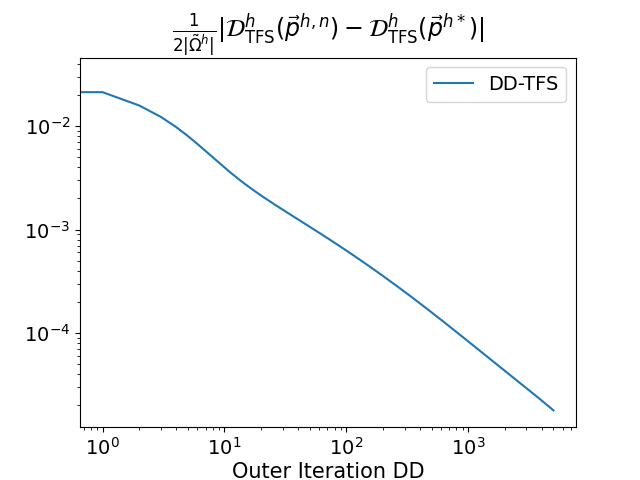}
  }\\[1ex]

  \subfloat[IRV1, Energy development DD, Image~10\label{fig:dd_irv1_energy}]{
    \includegraphics[width=0.48\linewidth]{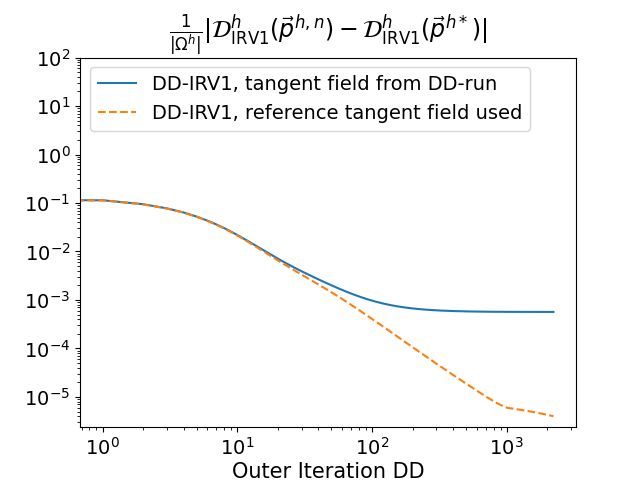}
  }\hfill
  \subfloat[IRV2, Energy development DD, Image~10\label{fig:dd_irv2_energy}]{
    \includegraphics[width=0.48\linewidth]{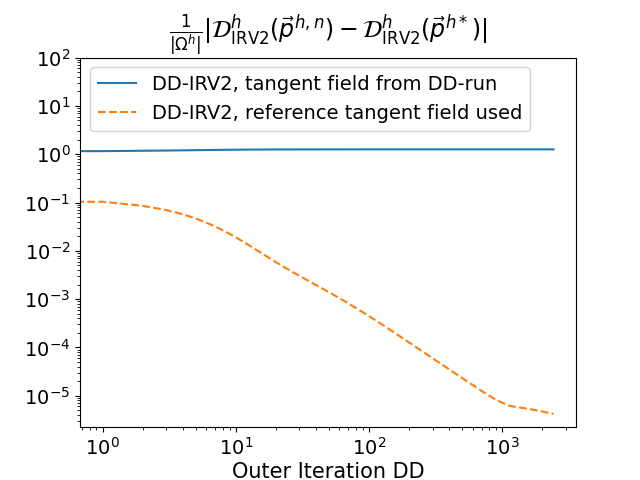}
  }

  \caption{Convergence of energies against reference energy for domain decomposition and Image~10, where $\vec p^{h*}$ denotes the respective reference solution}
  \label{fig:dd_energy}
\end{figure}

	
\section{Conclusions}
\label{sec:conclusions}

We provided a functional-analytic description of the TV-Stokes model by formulating both steps as precise variational problems, identifying the appropriate function spaces and dual formulations, and clarifying under which assumptions the divergence-free tangent field and the subsequent image reconstruction are mathematically compatible. Within this framework, we analyzed two variants of the image reconstruction step.

The analysis shows that these two variants differ in their variational coherence with the tangent field smoothing step. IRV1 can be formulated in a mathematically consistent way, but only under additional regularity assumptions and technical modifications. IRV2, by contrast, is directly compatible with the structure of the first step and leads to a standard TV problem with shifted data. From a variational perspective, IRV2 therefore provides a more natural and structurally aligned realization of the TV-Stokes model. At the same time, the numerical experiments indicate that both reconstruction variants yield visually similar results.

Finally, we derived an overlapping domain decomposition method for the TV-Stokes model. Although the continuous formulation involves global operators, the discrete setting admits a localized realization of all required computations. This allows the use of domain decomposition without modifying the underlying variational problems and enables parallel implementations that remain faithful to the analytical structure of the model.

%

\backmatter

%
%
%

\bmhead{Acknowledgements}

Tai is partially supported by the NORCE Kompetanseoppbygging program.

\begin{appendices}

\section{Dualization}\label{Sec:Dualization}
In this section we present the dualization of the TV-Stokes model. To this end we consider 
		\begin{align}\label{eq:quadraticTVproblem}
			\min\limits_{\vec{u}\in \bfK \cap BV(\Omega,\R^c)} \Big\{TV(\vec u) + F(\vec u)\Big\},
		\end{align}
where $c\in\N$, $\bfK \subseteq L^2(\Omega,\R^c)$ a closed subspace in $L^2(\Omega,\R^c)$, and $F(\vec u) = \frac{\beta}{2}\|\vec u + \frac{1}{\beta} \vec{f}\|_{L^2}^2$ with $\beta > 0$, $\vec{f}\in L^2(\Omega,\R^c)$. 
Note that a solution of \eqref{eq:quadraticTVproblem} is ensured \cite{ChambolleLions1997}.
\begin{example}
\begin{enumerate}[(i)] 
\item 
Set $c:=2$, $\bfK := K=\{ \vec{\tau} \in L^2(\Omega,\R^2)\colon \div\vec{\tau}=0\}$ denoting the null space of $\div: L^2(\Omega,\R^2) \to H^{-1}(\Omega,\R)$, $\vec{f} := -\frac{1}{\delta}\vec{\tau}_0$ and $\beta:=\frac{1}{\delta}$. 
Note that since $\div$ is a linear and bounded operator, $\Null(\div)$ is closed in $L^2(\Omega,\R^c)$ \cite[2.7-10 Corollary, p.98]{Kreyszig:1991}. 
Then we obtain
			\begin{align*}
				F(\vec{\tau})=\frac{1}{2\delta}\|\vec{\tau}-\vec{\tau}_0\|_{L^2}^2
			\end{align*}
and \eqref{eq:quadraticTVproblem} resembles \eqref{eq:cTFS}.
\item Setting $c:=1$, $\bfK:=L^2(\Omega,\R)$, $\vec{f}:=\div\vec{\xi}-\alpha d_0$ and $\beta:=\alpha$ yields
			\begin{align*}
			F(d) &= \frac{\alpha}{2}\|d + \frac{1}{\alpha}(\div\vec{\xi}-\alpha d_0)\|_{L^2}^2 = \frac{\alpha}{2} \|d - d_0\|_{L^2}^2 + \left\langle d- d_0, \div\vec\xi \right\rangle_{L^2} + \frac{\alpha}{2} \|\div\vec\xi\|_{L^2}^2
			\end{align*}
and hence \eqref{eq:quadraticTVproblem} resembles \eqref{IRPXi} with $\vec{\xi}$ as in 
\eqref{eq:defXi2}.
\end{enumerate}
\end{example}

		\begin{lemma}\label{Lemma:RCsupClosed}
Let $C\subseteq L^2(\Omega,\R^c)$ be a non-empty set and $R_C : L^2(\Omega,\R^c) \rightarrow\R\cup\{\infty\}$ be the support function of the set $C$ defined as
			\(
				R_C(u)~:=~\sup\limits_{w\in C}\langle u,w\rangle_{L^2},
			\)		
			then we have that $R_C(u)=R_{\overline{C}}(u)$ for all $u\in L^2(\Omega,\R^c)$. 
		\end{lemma}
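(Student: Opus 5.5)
The plan is to prove the two inequalities $R_C(u)\le R_{\overline{C}}(u)$ and $R_{\overline{C}}(u)\le R_C(u)$ separately, for a fixed $u\in L^2(\Omega,\R^c)$. Here $\overline{C}$ is the closure of $C$ in the $L^2(\Omega,\R^c)$-norm topology.

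The first inequality is immediate. Since $C\subseteq\overline{C}$, the supremum over the larger set is at least the supremum over the smaller one. This holds in $\R\cup\{\infty\}$, and no finiteness assumption is needed.

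For the reverse inequality, fix $w\in\overline{C}$. Choose a sequence $(w_n)_n\subset C$ with $\|w_n-w\|_{L^2}\to0$; such a sequence exists by the definition of the norm closure in a metric space. The Cauchy--Schwarz inequality gives
\[
|\langle u,w_n\rangle_{L^2}-\langle u,w\rangle_{L^2}|\le \|u\|_{L^2}\|w_n-w\|_{L^2}\to 0 ,
\]
so
\[
\langle u,w\rangle_{L^2}=\lim_{n\to\infty}\langle u,w_n\rangle_{L^2}\le R_C(u),
\]
because each term $\langle u,w_n\rangle_{L^2}$ is bounded above by $R_C(u)$. If $R_C(u)=\infty$ there is nothing to show, so assume it is finite. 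Taking the supremum over $w\in\overline{C}$ then yields $R_{\overline{C}}(u)\le R_C(u)$.

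There is no genuine obstacle. The only point to state explicitly is that the closure is meant in the strong $L^2$ topology, so that the linear functional $\langle u,\cdot\rangle_{L^2}$ is continuous on it. The same argument works for the weak closure, since $\langle u,\cdot\rangle_{L^2}$ is also weakly continuous. For a convex set $C$ the two closures coincide anyway, and this is the case in the intended application to $\B(C_0^1)$-type constraint sets.
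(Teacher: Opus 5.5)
Your proof is correct and is essentially the paper's argument: $R_C\le R_{\overline{C}}$ follows from $C\subseteq\overline{C}$, and the reverse inequality comes from approximating $w\in\overline{C}$ by $w_n\in C$ and using Cauchy--Schwarz to pass to the limit in $\langle u,w_n\rangle_{L^2}\le R_C(u)$. Your side remark about the weak closure would in general need nets rather than sequences, but it plays no role in the proof.
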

		\begin{proof}
			It is clear that $R_{C}(u)\leq R_{\overline{C}}(u)$. For the other direction, fix $w\in \overline{{C}}$ and choose a sequence $(w_n)_n \subset C$ such that $w_n\to w$ in $L^2(\Omega,\R^c)$. Then we have
			\begin{align*}
			\big|\langle u,w\rangle_{L^2}-\langle u,w_n\rangle_{L^2}\big|
			=\big| \langle u,w-w_n\rangle_{L^2}\big|
			\leq\|u\|_{L^2}\|w-w_n\|_{L^2}\to 0,
			\end{align*}
			which implies
			$
			\langle u,w\rangle_{L^2}
			=\lim\limits_{n\to\infty}\langle u,w_n\rangle_{L^2}
			\leq R_{C}(u).
			$
			Since this holds for all $w\in \overline{C}$, it also holds for the supremum and we get $R_{\overline{{C}}}(u)\leq R_{C}(u)$. In conclusion, we get 
			$
			R_{C}(u)
			=R_{\overline{{C}}}(u)
			=\sup\limits_{u^{*}\in {\overline{{C}}}}\langle u,u^{*}\rangle_{L^2}
			$
			for all $u\in L^2(\Omega,\R^c)$.
		\end{proof}

\begin{lemma}\label{Lemma:ConvConjInd}
			 Let $C\subseteq L^2(\Omega,\R^c)$ be a non-empty closed convex set. Then for the convex conjugate (Legendre-Fenchel-Transform) $R^{*}_{C}$ of $R_{C}$ we have $R^{*}_{C}(u^{*})=\chi_{C}(u^{*})$ where 
			 \begin{align}\label{defind}
			 	\chi_{C}(\omega):=
			 	\begin{cases}
			 		0, &\text{if $\omega\in C$,}\\
			 		+\infty &\text{otherwise,}
			 	\end{cases}
			 \end{align}
			 denotes the indicator function.
		\end{lemma}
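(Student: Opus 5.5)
We need to show $R_C^* = \chi_C$ for a nonempty closed convex $C\subseteq L^2(\Omega,\R^c)$. The plan is to compute the conjugate directly from its definition,
\[
R_C^*(u^*) = \sup_{u\in L^2(\Omega,\R^c)} \big\{\langle u,u^*\rangle_{L^2} - R_C(u)\big\},
\]
and treat the two cases $u^*\in C$ and $u^*\notin C$ separately. The underlying observation is that $R_C$ is positively homogeneous, so its conjugate can only take the values $0$ and $+\infty$. The task then reduces to identifying where it vanishes.

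\emph{Case $u^*\in C$.} By definition of the support function, $R_C(u)\ge \langle u,u^*\rangle_{L^2}$ for every $u$. Hence every term in the supremum is $\le 0$, and the choice $u=0$ gives $0$ because $R_C(0)=0$ for nonempty $C$. So $R_C^*(u^*)=0=\chi_C(u^*)$.

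\emph{Case $u^*\notin C$.} This is the step that actually uses the hypotheses, and the only possible obstacle; in a Hilbert space it is a standard separation argument. Since $C$ is nonempty, closed and convex, I would use the projection theorem in the Hilbert space $L^2(\Omega,\R^c)$. (Alternatively the Hahn–Banach strict separation theorem applies, since $\{u^*\}$ is compact and $C$ is closed convex.) Let $\bar w := \mathcal{P}_C u^*$ be the metric projection onto $C$ and set $u_0 := u^*-\bar w \neq 0$. The variational inequality $\langle u^*-\bar w, w-\bar w\rangle_{L^2}\le 0$ for all $w\in C$ gives
\[
\langle u_0, w\rangle_{L^2} \le \langle u_0,\bar w\rangle_{L^2} = \langle u_0,u^*\rangle_{L^2} - \|u_0\|_{L^2}^2 .
\]
Taking the supremum over $w\in C$ yields $R_C(u_0) \le \langle u_0,u^*\rangle_{L^2} - \|u_0\|^2_{L^2}$, so
\[
\langle u_0,u^*\rangle_{L^2} - R_C(u_0) \ge \|u_0\|^2_{L^2} =: \gamma > 0 .
\]
Note that $R_C(u_0)$ is finite by this bound.

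For $t>0$ we then test with $u=tu_0$. Positive homogeneity, $R_C(tu_0)=tR_C(u_0)$, is immediate from the definition. This gives $\langle tu_0,u^*\rangle_{L^2}-R_C(tu_0)\ge t\gamma\to\infty$ as $t\to\infty$, hence $R_C^*(u^*)=+\infty=\chi_C(u^*)$.

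Combining the two cases proves the lemma.

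Finally, if $C$ were only assumed nonempty, \cref{Lemma:RCsupClosed} together with the argument above would give $R_C^*=\chi_{\overline{\mathrm{conv}}\,C}$. In the intended application $C$ is the closure of a convex set, e.g.\ of $\{\vecdiv\vec p:\vec p\in\B(C_0^1)\}$, so the closedness and convexity hypotheses hold there.
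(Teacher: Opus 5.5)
Your proof is correct, but it takes a different route from the paper.

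The paper argues entirely through conjugacy theory:
\begin{enumerate}
\item By \cite[Theorem 13.2]{rockefellar:1970}, $\chi_C^*=R_C$, so $R_C^*=\chi_C^{**}$.
\item By \cite[Theorem 12.2]{rockefellar:1970}, $\chi_C^{**}=\cl\chi_C$.
\item Since $C$ is closed and convex, $\cl\chi_C=\chi_C$.
\end{enumerate}

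You instead compute $R_C^*$ directly from its definition and split into two cases.
\begin{itemize}
\item For $u^*\in C$, the support-function inequality together with $R_C(0)=0$ gives the value $0$.
\item For $u^*\notin C$, the metric projection $\mathcal{P}_C u^*$ and its variational inequality produce a direction $u_0$ with $\langle u_0,u^*\rangle_{L^2}-R_C(u_0)\ge\|u_0\|_{L^2}^2>0$. Positive homogeneity then lets you scale this up to $+\infty$.
\end{itemize}
In effect you reprove the biconjugation (Fenchel--Moreau) theorem for this particular positively homogeneous function. The Hilbert-space projection theorem plays the role of the separation argument that underlies Fenchel--Moreau.

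The paper's version is shorter and relies on general theory. Yours is self-contained and makes explicit where nonemptiness, closedness and convexity are used. It also works directly in the infinite-dimensional space $L^2(\Omega,\R^c)$. That is a real advantage, because the cited theorems in Rockafellar are stated on $\R^n$, so the paper's argument strictly needs their infinite-dimensional analogues.

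One small point about your closing remark. \cref{Lemma:RCsupClosed} only accounts for passing to the closure. Replacing $C$ by its convex hull needs the additional, trivial observation that a linear functional has the same supremum over a set as over its convex hull.
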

		\begin{proof}
 By \cite[Theorem 13.2]{rockefellar:1970}, the convex conjugate of the indicator function is the support function (defined in \cref{Lemma:RCsupClosed}), i.e.\
			\begin{align}\label{formConvConjInd}	\chi_{C}^{*}(\omega^{*})=R_{C}(\omega^{*}).
			\end{align}
Moreover, thanks to \cite[Theorem 12.2]{rockefellar:1970}, we have 
			\begin{align}\label{formConv2ConjCl}
			\cl\chi_{C}=\chi_{C}^{**}.
			\end{align}
			Then, since $\chi_{C}$ is closed, we get
			\begin{align*}
			R^{*}_{C}(u^{*})
			\underset{(\ref{formConvConjInd})}{=}\chi_{C}^{**}(u^{*})
			\underset{(\ref{formConv2ConjCl})}{=}\cl\chi_{C}(u^{*})
			=\chi_{C}(u^{*}).
			\end{align*}
		\end{proof}
		
Let $C^\bfK_0:=\mcP[\bfK] \vecdiv \B(\mathcal{C}_0^1(\Omega,\R^{2\times c})) =\{\mcP[\bfK] \vecdiv \vec{p} \in \bfK \colon \vec{p}\in \B(\mathcal{C}_0^1(\Omega,\R^{2\times c}))\}$, then for $\vec{u}\in \bfK$ we have that 
\begin{align*}
	TV(\vec{u})	&=	\sup\limits_{ \vec{p}\in \B(\mathcal{C}_0^1(\Omega,\R^{2\times c}))}\langle \vec{u},\vecdiv \vec{p}\rangle_{L^2}
			= \sup\limits_{ \vec{p}\in \B(\mathcal{C}_0^1(\Omega,\R^{2\times c}))}\langle \mcP[\bfK] \vec{u},\vecdiv \vec{p}\rangle_{L^2}\\
			&= \sup\limits_{ \vec{p}\in \B(\mathcal{C}_0^1(\Omega,\R^{2\times c}))}\langle \vec{u},\mcP[\bfK] \vecdiv \vec{p}\rangle_{L^2}
			= \sup\limits_{\vec{w}\in C^\bfK_0}\langle\vec{u},\vec{w}\rangle_{L^2}
			= R_{\overline{{C^\bfK_0}}}(\vec{u}),
			\end{align*}
where the latter equality follows from \cref{Lemma:RCsupClosed}. From these considerations we observe that 
\begin{align*}
	 R_{\overline{{C^\bfK_0}}}(\vec{u})= \begin{cases}
	\TV(\vec{u}) & \text{ if } \vec{u} \in \bfK,\\
	\TV(\mcP[\bfK]\vec{u}) & \text{ otherwise.}
	\end{cases}
\end{align*}
Thus \eqref{eq:quadraticTVproblem} takes the form
\begin{align*}
			\min\limits_{u\in \bfK\cap BV(\Omega,\R^c)} R_{\overline{{C^\bfK_0}}}(u)+F(u).
\end{align*}

\begin{theorem}\label{Thm:Dual}
Let $c\in\N$, $\bfK \subseteq L^2(\Omega,\R^c)$ a closed subspace in $L^2(\Omega,\R^c)$, and $F(u):=\frac{\beta}{2}\|\vec{u}+\frac{1}{\beta}\vec{f}\|_{L^2}^2$ 
with $\beta > 0$, 
$\vec{f}\in \bfK$. Then any solution $\vec{u}^*$ of
\begin{align}\label{eq:quadraticTVproblemR}
			\min\limits_{\vec{u}\in L^2(\Omega,\R^c)\cap BV(\Omega,\R^c)}  R_{\overline{{C^\bfK_0}}}(u)+F(u)
		\end{align}
is in $\bfK$, i.e.\ $\vec{u}^*\in \bfK$ and \eqref{eq:quadraticTVproblemR} is equivalent to \eqref{eq:quadraticTVproblem}. Moreover, $\vec{u}^* = -\frac{1}{\beta} \vec{f} - \frac{1}{\beta} \vec{s}^*$, where $\vec{s}^*$ 
\begin{equation}\label{eq:DualProblem}
\min\limits_{\vec{s}\in \overline{C^\bfK_0}} \| \vec{s} + \vec{f}\|_{L^2}^2 = \inf\limits_{\vec{p}\in \B(H_0^{\div}(\Omega,\R^{2\times c}))} \| \mcP[\bfK]\vecdiv\vec{p} + \vec{f}\|_{L^2}^2.
\end{equation}
\end{theorem}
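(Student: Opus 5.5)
The plan is to apply Fenchel--Rockafellar duality in the Hilbert space $L^2(\Omega,\R^c)$ to the pair $R:=R_{\overline{C^\bfK_0}}$ and $F$, with the identity as the linear operator. First, $\overline{C^\bfK_0}$ is a nonempty closed convex subset of $\bfK$. It is convex because $\B(\mathcal{C}_0^1)$ is convex and $\mcP[\bfK]\vecdiv$ is linear. It lies in $\bfK$ because $\bfK$ is closed. Hence \cref{Lemma:ConvConjInd} gives $R^*=\chi_{\overline{C^\bfK_0}}$.

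A direct computation, substituting $\vec w=\vec u+\beta^{-1}\vec f$, gives
\[
F^*(\vec v)=\tfrac{1}{2\beta}\big(\|\vec v-\vec f\|_{L^2}^2-\|\vec f\|_{L^2}^2\big).
\]
Since $F$ is finite and continuous on all of $L^2$ and $R$ is proper, convex and l.s.c. (it is a support function, with $R(0)=0$), the qualification condition of \cite[Prop. III.4.1, Remark III.4.2]{ekeland:1999} holds. Hence
\[
\inf_{\vec u}\{R(\vec u)+F(\vec u)\}=\sup_{\vec s}\{-R^*(\vec s)-F^*(-\vec s)\}=-\tfrac{1}{2\beta}\inf_{\vec s\in\overline{C^\bfK_0}}\big(\|\vec s+\vec f\|_{L^2}^2-\|\vec f\|_{L^2}^2\big),
\]
and the dual problem is the left-hand minimization in \eqref{eq:DualProblem}. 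A minimizer $\vec s^*$ exists, and is unique, as the metric projection of $-\vec f$ onto the closed convex set $\overline{C^\bfK_0}$. The primal minimizer $\vec u^*$ exists by \cite{ChambolleLions1997} and is unique by strict convexity of $F$. The extremality relations then give $-\vec s^*\in\partial F(\vec u^*)=\{\beta\vec u^*+\vec f\}$, that is, $\vec u^*=-\frac1\beta\vec f-\frac1\beta\vec s^*$.

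Membership in $\bfK$ is now immediate, since $\vec f\in\bfK$ and $\vec s^*\in\overline{C^\bfK_0}\subseteq\bfK$. For the equivalence with \eqref{eq:quadraticTVproblem}, I use the computation preceding the theorem, namely $R(\vec u)=TV(\vec u)$ for $\vec u\in\bfK$. Every minimizer of \eqref{eq:quadraticTVproblemR} lies in $\bfK$, where the two objectives coincide, and every feasible point of \eqref{eq:quadraticTVproblem} is feasible for \eqref{eq:quadraticTVproblemR}. Hence the two infima and their minimizers agree. Finiteness of $R(\vec u^*)=TV(\vec u^*)$ also confirms $\vec u^*\in BV$.

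The remaining, and main, step is the right-hand equality in \eqref{eq:DualProblem}. I would show
\[
\overline{C^\bfK_0}=\overline{\mcP[\bfK]\vecdiv\,\B(H_0^{\div}(\Omega,\R^{2\times c}))}
\]
and then use that the infimum of the continuous function $\|\cdot+\vec f\|^2$ over a set equals its infimum over the closure. The inclusion "$\subseteq$" is trivial, since $\B(\mathcal{C}_0^1)\subset\B(H_0^{\div})$. For "$\supseteq$", I would invoke the density of $\B(\mathcal{C}_0^1)$ in $\B(H_0^{\div})$ with respect to the $H^{\div}$-norm \cite{HintermullerRautenberg:15}, applied componentwise for each of the $c$ columns. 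I would combine this with the continuity of $\mcP[\bfK]\vecdiv$ from $H_0^{\div}$ to $L^2$, which holds because $\|\mcP[\bfK]\|=1$ and $\vecdiv$ is bounded in the graph norm. The care needed is in checking that the cited density holds for the column-wise constraint $|\vec p_i|\le1$ in the matrix-valued case. Note also that the infimum over $\B(H_0^{\div})$ need not be attained, which is why it is written as an $\inf$.
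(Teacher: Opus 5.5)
Your proposal is correct and follows essentially the same route as the paper: both use convex duality for $R_{\overline{C^\bfK_0}}+F$ with $R^*_{\overline{C^\bfK_0}}=\chi_{\overline{C^\bfK_0}}$ (\cref{Lemma:ConvConjInd}), the primal--dual relation $\vec s^*=-\beta\vec u^*-\vec f$, and the density of $\B(\mathcal{C}_0^1)$ in $\B(H_0^{\div})$ to pass to the $H_0^{\div}$ formulation; the only difference is that the paper phrases the duality step through the optimality condition and subdifferential inversion \cite[Corollary 5.2]{ekeland:1999}, whereas you use Fenchel--Rockafellar extremality relations. One small point in your favor: you get $\vec u^*\in\bfK$ directly from $\vec s^*\in\overline{C^\bfK_0}\subseteq\bfK$, while the paper's argument goes through a minimizer $\vec p^*$ over $\B(H_0^{\div}(\Omega,\R^{2\times c}))$, whose existence is not guaranteed, as you yourself note.
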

\begin{proof}
Note that $R_{\overline{{C^\bfK_0}}}$ and $F$ are convex and lower semicontinuous functionals. Further there exists $\vec{u} \in \Dom(R_{\overline{{C^\bfK_0}}}) \cap \Dom(F)$ where $F$ is continuous, e.g.\ any constant function.
A minimizer $\vec{u}^*\in L^2(\Omega,\R^c)\cap BV(\Omega,\R^c)$ of \eqref{eq:quadraticTVproblemR} fulfills the necessary condition
			\begin{align}\label{Rumf1}
			0\in \partial \left(R_{\overline{{C^\bfK_0}}}(\vec{u}^*)+F(\vec{u}^*) \right)= \partial R_{\overline{{C^\bfK_0}}}(\vec{u}^*)+\partial F(\vec{u}^*) = \partial R_{\overline{{C^\bfK_0}}}(\vec{u}^*)+ \beta \vec{u}^* + \vec{f},
			\end{align}
where we used \cite[Prop. 5.6, p.26]{ekeland:1999} and the definition of $F$.
By \cite[Corollary 5.2, p.22]{ekeland:1999} we obtain
\begin{align*}
\vec{u}^*\in\partial R_{\overline{{C^\bfK_0}}}^{*}(-\beta \vec{u}^* - \vec{f})
\end{align*}
			which is equivalent to
\begin{align*}
0\in \frac{1}{\beta}\left(-\beta\vec{u}^* - \vec{f}\right) + \frac{1}{\beta} \vec{f} + \partial R_{\overline{{C^\bfK_0}}}^{*}(-\beta \vec{u}^* - \vec{f}).
\end{align*}
Setting $\vec{s}^*:=-\beta \vec{u}^* - \vec{f} \in L^2(\Omega,\R^c)$ we obtain
			\begin{align*}
			0\in \frac{1}{\beta} \left( \vec{s}^* + \vec{f}\right) + \partial R_{\overline{{C^\bfK_0}}}^{*}(\vec{s}^*)
			\end{align*}
and hence $\vec{s}^*$ is the minimizer of
			\begin{align}\label{eq:optwRkstar}
				\inf\limits_{\vec{s}\in L^2(\Omega,\R^c)}\frac{1}{2\beta} \| \vec{s} + \vec{f}\|_{L^2}^2 + R_{\overline{{C^\bfK_0}}}^{*}(\vec{s}).
			\end{align}
By \cref{Lemma:ConvConjInd} and the same arguments as in the proof of \cref{Lemma:RCsupClosed} we obtain that \eqref{eq:optwRkstar} can be rewritten as
\begin{align*}
\inf\limits_{\vec{s}\in \overline{{C^\bfK_0}}}\frac{1}{2\beta} \| \vec{s} + \vec{f}\|_{L^2}^2 =\inf\limits_{\vec{s}\in {C^\bfK_0}}\frac{1}{2\beta} \| \vec{s} + \vec{f}\|_{L^2}^2 = \inf\limits_{\vec{p}\in \B(\mathcal{C}_0^1(\Omega,\R^{2\times c}))}\frac{1}{2\beta} \| \mcP[\bfK]\vecdiv\vec{p} + \vec{f}\|_{L^2}^2
\end{align*}
Since $\B(C_0^{1}(\Omega,\R^{2\times c}))$ is dense in the sense of $H_0^{\div}(\Omega,\R^{2\times c})$ in $\B(H_0^{\div}(\Omega,\R^{2\times c}))$ \cite{HintermullerRautenberg:15, HiLaAl2023} we obtain
\begin{align}\label{eq:DualProblemProof}
 \inf\limits_{\vec{p}\in \B(\mathcal{C}_0^1(\Omega,\R^{2\times c}))}\frac{1}{2\beta} \| \mcP[\bfK]\vecdiv\vec{p} + \vec{f}\|_{L^2}^2  = \inf\limits_{\vec{p}\in \B(H_0^{\div}(\Omega,\R^{2\times c}))}\frac{1}{2\beta} \| \mcP[\bfK]\vecdiv\vec{p} + \vec{f}\|_{L^2}^2.
\end{align}

Let $\vec{p}^*$ be a solution of \eqref{eq:DualProblemProof}. Then from the relation $\vec{s}^*:=-\beta \vec{u}^* - \vec{f}$ we obtain $\mcP[\bfK]\vecdiv \vec{p}^* = -\beta \vec{u}^*-\vec{f}$. That is $\vec{u}^* = -\frac{1}{\beta} \vec{f} - \frac{1}{\beta}\mcP[\bfK]\vecdiv \vec{p}^*$ and since $\vec{f}\in\bfK$ we get $\vec{u}^* \in \bfK$.
\end{proof}

We call \eqref{eq:DualProblem} the dual formulation  of \eqref{eq:quadraticTVproblem}.
Note that $\inf\limits_{\vec{s}\in \overline{{C^\bfK_0}}}\frac{1}{2\beta} \| \vec{s} + \vec{f}\|_{L^2}^2$ has a unique solution for any $\vec{f}\in L^2(\Omega, \R^c)$, see, e.g., \cite[3.3-1 Thm., p.144]{Kreyszig:1991}. However, in general this cannot be ensured for 
$\inf\limits_{\vec{p}\in \B(H_0^{\div}(\Omega,\R^{2\times c}))} \| \mcP[\bfK]\vecdiv\vec{p} + \vec{f}\|_{L^2}^2$, as the orthogonal projection $\mcP[\bfK]$ annihilates components orthogonal to $\bfK$. 
While the latter is of course unpleasant, it is practically not relevant as long as we can find a minimal dual energy. Actually, in finite dimensions, utilizing a semi-implicit scheme, as in \cite{Chambolle:2004}, allows to generate a sequence $(\vec{p}^n)_n$ such that $\mcD(\vec{p}^n) \to \mcD(\vec{p})$ for $n\to \infty$, where $\mcP[\bfK]\vecdiv \vec{p}$ is a minimizer of \eqref{eq:DualProblem}, cf.\ \cite[Theorem 7.2]{HilbLanger2022}.

\section{Discrete Orthogonal Projection}\label{sec:disorthproj}
In this section, we show that $\mcPh[K^h]: \mcX(\tilde{\Omega}^h,2) \to K^h$ given by \eqref{formula:PKh} is the unique orthogonal projection onto $K^h$. 
\begin{lemma}\label{lmDiscLaplSym}
	Let $d_1,d_2\in \mcX(\tilde{\Omega}^h,1)$. Then we have that
	\begin{align*}
		\langle (\Delta_{\tilde{\Omega}^h}^h)^\dagger d_1, d_2\rangle_{\mcX(\Omega^h,1)} = \langle d_1, (\Delta_{\tilde{\Omega}^h}^h)^\dagger d_2\rangle_{\mcX(\tilde\Omega^h,1)}.
	\end{align*}
\end{lemma}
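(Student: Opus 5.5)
We will prove the symmetry of $(\Delta_{\tilde{\Omega}^h}^h)^\dagger$ by means of the DCT representation \eqref{form:DiscInvLapl}. (The subscript $\mcX(\Omega^h,1)$ on the left-hand inner product should read $\mcX(\tilde\Omega^h,1)$, since both arguments live on $\tilde\Omega^h$.)

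First, note that the inner product on $\mcX(\tilde\Omega^h,1)$ is $h^2$ times the Frobenius inner product of the two $\tilde N_2\times\tilde N_1$ arrays, i.e.
\begin{align*}
\langle u,v\rangle_{\mcX(\tilde\Omega^h,1)} = h^2\operatorname{trace}(u^Tv).
\end{align*}
By \eqref{eq:DCT2D} and \eqref{eq:DCT2Dinv}, and because $C_{\tilde N_1}$ and $C_{\tilde N_2}$ are orthogonal matrices, the map $\mathcal{C}^h_{\tilde\Omega^h}$ is an isometry for this inner product. Its adjoint is therefore its inverse:
\begin{align*}
\langle \mathcal{C}^h_{\tilde\Omega^h}u,v\rangle_{\mcX(\tilde\Omega^h,1)} = \langle u,(\mathcal{C}^h_{\tilde\Omega^h})^{-1}v\rangle_{\mcX(\tilde\Omega^h,1)}.
\end{align*}
This follows directly from the cyclic property of the trace, writing $\operatorname{trace}\big((C_{\tilde N_2}uC_{\tilde N_1}^T)^Tv\big)=\operatorname{trace}\big(u^T C_{\tilde N_2}^Tv C_{\tilde N_1}\big)$.

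Second, the operator $(\tilde\Delta^h_{\tilde\Omega^h})^\dagger$ in \eqref{form:DiscInvLaplTild} acts by entrywise multiplication with a fixed real array of weights: $0$ at $(1,1)$ and the negative reciprocals of the corresponding $\sigma$-sums elsewhere. Any entrywise scaling by a real array is self-adjoint with respect to the Frobenius inner product. One only has to check that every weight is well defined, i.e. that no denominator vanishes off the $(1,1)$ entry. This holds because $\sigma_{N,j}\neq 0$ for $j\ge 2$, as $\sin\big((j-1)\pi/(2N)\big)>0$ for $1\le j-1\le N-1$.

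Combining the two steps gives the claim:
\begin{align*}
\langle (\Delta^h_{\tilde\Omega^h})^\dagger d_1,d_2\rangle
&=\langle (\tilde\Delta^h_{\tilde\Omega^h})^\dagger \mathcal{C}^h_{\tilde\Omega^h}d_1,\mathcal{C}^h_{\tilde\Omega^h}d_2\rangle\\
&=\langle \mathcal{C}^h_{\tilde\Omega^h}d_1,(\tilde\Delta^h_{\tilde\Omega^h})^\dagger\mathcal{C}^h_{\tilde\Omega^h}d_2\rangle
=\langle d_1,(\Delta^h_{\tilde\Omega^h})^\dagger d_2\rangle .
\end{align*}

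The main obstacle is relying on \eqref{form:DiscInvLapl}, which the paper takes from \cite{dualtvstokes:2009}. As a fallback that avoids the DCT altogether, use the abstract argument. The matrix $\Delta^h_{\tilde\Omega^h}=-(\grad^h)^*\grad^h$ is self-adjoint. The Moore–Penrose inverse of a self-adjoint operator is again self-adjoint, since $(A^\dagger)^*=(A^*)^\dagger$, which follows from the uniqueness of the Penrose conditions. This yields the statement independently of the explicit formula.
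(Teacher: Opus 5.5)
Your proof is correct and follows the paper's own argument: write $(\Delta^h_{\tilde\Omega^h})^\dagger$ via the DCT representation, use orthogonality of $\mathcal{C}^h_{\tilde\Omega^h}$ to move it across the inner product, and use that $(\tilde\Delta^h_{\tilde\Omega^h})^\dagger$ is an entrywise scaling and hence symmetric. Your fallback via $\Delta^h_{\tilde\Omega^h}=-(\grad^h_{\tilde\Omega^h})^*\grad^h_{\tilde\Omega^h}$ and $(A^\dagger)^*=(A^*)^\dagger$ is also valid and has the advantage of not depending on the DCT formula; you are also right that the subscript $\mcX(\Omega^h,1)$ should read $\mcX(\tilde\Omega^h,1)$.
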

\begin{proof}
		We use representation (\ref{form:DiscInvLapl}) of $(\Delta_{\tilde{\Omega}^h}^h)^\dagger$, i.e., $
		(\Delta_{\tilde{\Omega}^h}^h)^\dagger=(\mathcal{C}_{\tilde{\Omega}^h}^h)^{-1}(\tilde\Delta_{\tilde{\Omega}^h}^h)^\dagger\mathcal{C}_{\tilde{\Omega}^h}^h.
		$	
		Let $d_1,d_2\in \mcX(\tilde\Omega^h,1)$. From (\ref{form:DiscInvLaplTild}), one can easily observe that
		\begin{align*}
			\langle (\tilde\Delta_{\tilde{\Omega}^h}^h)^\dagger d_1, d_2\rangle_{\mcX(\tilde\Omega^h,1)} = \langle d_1, (\tilde\Delta_{\tilde{\Omega}^h}^h)^\dagger d_2\rangle_{\mcX(\tilde\Omega^h,1)}.
		\end{align*}
		Furthermore, since the DCT is orthogonal, see e.g.\ \cite[p.\ 496]{Sauer:12}, we get
		\begin{align*}
			\langle\mathcal{C}_{\tilde{\Omega}^h}^h d_1,d_2\rangle_{\mcX(\Omega^h,1)}
			&=\langle C_{\tilde{N}_2}~d_1~C_{\tilde{N}_1}^T,d_2\rangle_{\mcX(\tilde\Omega^h,1)}
			=\langle C_{\tilde{N}_2}^T~d_2~C_{\tilde{N}_1},d_1\rangle_{\mcX(\tilde\Omega^h,1)}\\
			&=\langle d_1,(\mathcal{C}_{\tilde{\Omega}^h}^h)^{-1}d_2\rangle_{\mcX(\tilde\Omega^h,1)}.
		\end{align*}
		Bringing all this together, yields
		\begin{align*}
		&\langle (\Delta_{\tilde{\Omega}^h}^h)^\dagger d_1, d_2\rangle_{\mcX(\tilde{\Omega}^h,1)}
		= \langle (\mathcal{C}_{\tilde{\Omega}^h}^h)^{-1}(\tilde\Delta_{\tilde{\Omega}^h}^h)^\dagger\mathcal{C}_{\tilde{\Omega}^h}^h d_1, d_2\rangle_{\mcX(\tilde{\Omega}^h,1)}
		= \langle (\tilde\Delta_{\tilde{\Omega}^h}^h)^\dagger\mathcal{C}_{\tilde{\Omega}^h}^h d_1, \mathcal{C}_{\tilde{\Omega}^h}^h d_2\rangle_{\mcX(\tilde{\Omega}^h,1)} \\
		&= \langle \mathcal{C}_{\tilde{\Omega}^h}^h d_1, (\tilde\Delta_{\tilde{\Omega}^h}^h)^\dagger\mathcal{C}_{\tilde{\Omega}^h}^h d_2\rangle_{\mcX(\tilde{\Omega}^h,1)}
		= \langle d_1, (\mathcal{C}_{\tilde{\Omega}^h}^h)^{-1}(\tilde\Delta_{\tilde{\Omega}^h}^h)^\dagger\mathcal{C}_{\tilde{\Omega}^h}^h d_2\rangle_{\mcX(\tilde{\Omega}^h,1)}
		= \langle d_1, (\Delta_{\tilde{\Omega}^h}^h)^\dagger d_2\rangle_{\mcX(\tilde{\Omega}^h,1)}.
		\end{align*}
		{\ }
\end{proof}
\begin{proposition}\label{prop:DisProj}
	The operator $\mcPh[K^h]: \mcX(\tilde{\Omega}^h,2) \to K^h$ with
	\(
	\mcPh[K^h] = I^h - \grad_{\tilde{\Omega}^h}^h (\Delta_{\tilde{\Omega}^h}^h)^\dagger \div_{\tilde{\Omega}^h}, 
	\)
	as defined in (\ref{formula:PKh}), is the unique orthogonal projection onto $K^h$.
\end{proposition}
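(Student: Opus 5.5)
We prove that $\mcPh[K^h]$ is linear, idempotent, self-adjoint with respect to $\langle\cdot,\cdot\rangle_{\mcX(\tilde\Omega^h,2)}$, and has range exactly $K^h$. Uniqueness then follows from the standard fact that a closed subspace of a Hilbert space (here finite dimensional) admits exactly one orthogonal projection, cf.\ \cite[3.3-1 Thm., p.144]{Kreyszig:1991}. Write $G:=\grad^h_{\tilde\Omega^h}$, $D:=\div^h_{\tilde\Omega^h}$ and $L:=\Delta^h_{\tilde\Omega^h}=DG$. We use $D=-G^*$ from \cref{Sec:Discrete:Notation}, and $L^\dagger$ as given by \eqref{form:DiscInvLapl}.

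\textbf{Self-adjointness.} For $\vec u,\vec v\in\mcX(\tilde\Omega^h,2)$ we have
\[
\langle GL^\dagger D\vec u,\vec v\rangle=-\langle L^\dagger D\vec u, D\vec v\rangle=-\langle D\vec u,L^\dagger D\vec v\rangle=\langle \vec u,GL^\dagger D\vec v\rangle .
\]
The first and last equalities use $D=-G^*$, and the middle one is \cref{lmDiscLaplSym}. Hence $I^h-GL^\dagger D$ is self-adjoint.

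\textbf{Idempotence and range.} Both reduce to the Moore--Penrose identity $LL^\dagger L=L$ and to $LL^\dagger$ acting as the identity on $\range(D)$. From the DCT diagonalization, $L=\mathcal{C}^{-1}\tilde\Delta\,\mathcal{C}$ with $\tilde\Delta$ diagonal. The only zero eigenvalue is the $(1,1)$ mode, i.e.\ constants (the entries $\sigma$ vanish only for index $1$). Consequently $LL^\dagger=\mathcal{C}^{-1}(I-e_{11}e_{11}^T)\mathcal{C}$ is the orthogonal projection onto constants$^\perp=\range(L)$.

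Next we check that $\range(D)\perp$ constants. This holds because $\langle D\vec u,1\rangle=-\langle \vec u,G1\rangle=0$, since the forward differences of a constant vanish. Therefore $\range(D)\subseteq\range(L)$, and in fact equality holds because $\range(L)\subseteq\range(D)$ trivially.

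From $LL^\dagger D=D$ we get $D(I-GL^\dagger D)=D-LL^\dagger D=0$, so the range lies in $K^h$. Conversely, if $\vec\tau\in K^h$ then $\mcPh[K^h]\vec\tau=\vec\tau$. Together these give idempotence, and the range is exactly $K^h$.

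\textbf{Main obstacle.} The delicate step is justifying the explicit spectral description: that \eqref{form:DiscInvLapl}--\eqref{form:DiscInvLaplTild} really gives the Moore--Penrose inverse, and that the kernel of $L$ is one-dimensional (constants). We would handle it by verifying that the DCT-II matrices $C_N$ diagonalize the 1D Neumann second-difference matrix with eigenvalues $-\sigma_{N,j}^2$. The 2D operator then separates as a Kronecker sum, so its eigenvalues are $-(\sigma_{\tilde N_1,i}^2+\sigma_{\tilde N_2,j}^2)$. For a diagonal matrix the pseudoinverse inverts the nonzero entries, and orthogonality of $C_N$ transfers this to $L$. This is the argument referenced as in \cite{dualtvstokes:2009}, which we would cite.
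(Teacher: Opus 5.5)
Your proof is correct. The self-adjointness step is exactly the paper's argument: the adjointness $D=-G^*$ combined with the symmetry of $L^\dagger$ from \cref{lmDiscLaplSym}.

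For idempotence you take a different route. The paper expands $(\mcPh[K^h])^2$ and invokes the Penrose identity $L^\dagger L L^\dagger = L^\dagger$, which holds for every matrix and needs no information about $\range(D)$. It then stops after idempotence and self-adjointness. You instead show $D\,\mcPh[K^h]=0$ via $LL^\dagger D = D$, and note that $\mcPh[K^h]$ fixes $K^h$. This yields idempotence \emph{and} shows that the range is exactly $K^h$. The paper's argument leaves the range claim implicit, since idempotence plus self-adjointness only gives an orthogonal projection onto \emph{some} subspace. Your version therefore closes a step the paper does not address.

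The price of your route is the inclusion $\range(D)\subseteq\range(L)$. However, the ``main obstacle'' you identify can be bypassed entirely. Since $L^\dagger$ is by definition the Moore--Penrose inverse, $LL^\dagger$ is automatically the orthogonal projection onto $\range(L)$. Moreover, for any linear $G$ you have $\range(L)=\range(G^*G)=\Null(G^*G)^\perp=\Null(G)^\perp=\range(G^*)=\range(D)$. So neither the DCT diagonalization nor the one-dimensionality of $\Null(L)$ is needed for this step.

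The DCT representation is used in the paper only to obtain the symmetry of $L^\dagger$ in \cref{lmDiscLaplSym}. Even that follows from the general fact that the pseudoinverse of a self-adjoint operator is self-adjoint.
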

\begin{proof}
Due to the definition of orthogonal projection \cite[Definition 8.3]{hunter2001applied}, we need to show that  
$\big(\mcPh[K^h]\big)^2 = \mcPh[K^h]$ and 
$\langle \mcPh[K^h]\vec\tau^h_1,\vec\tau^h_2\rangle_{\mcX(\tilde{\Omega}^h,2)}
		=\langle \vec\tau^h_1,\mcPh[K^h]\vec\tau^h_2\rangle_{\mcX(\tilde{\Omega}^h,2)}$~for all $\vec\tau^h_1,\vec\tau^h_2\in\mcX(\tilde{\Omega}^h,2).$
The first equality follows, since the Moore-Penrose-Inverse fulfills $(\Delta^h_{\tilde{\Omega}^h})^\dagger=(\Delta^h_{\tilde{\Omega}^h})^\dagger\Delta^h_{\tilde{\Omega}^h}(\Delta^h_{\tilde{\Omega}^h})^\dagger$:
		\begin{align*}
		\big(\mcPh[K^h]\big)^2\vec\tau^h
		&=\mcPh[K^h]\vec\tau^h - \grad_{\tilde{\Omega}^h}^h(\Delta_{\tilde{\Omega}^h}^h)^\dagger\div_{\tilde{\Omega}^h}^h\mcPh[K^h]\vec\tau^h \\
		&=\mcPh[K^h]\vec\tau^h - \grad_{\tilde{\Omega}^h}^h(\Delta_{\tilde{\Omega}^h}^h)^\dagger\div_{\tilde{\Omega}^h}^h(\vec\tau^h - \grad_{\tilde{\Omega}^h}^h(\Delta_{\tilde{\Omega}^h}^h)^\dagger\div_{\tilde{\Omega}^h}^h\vec\tau^h) \\
		&=\mcPh[K^h]\vec\tau^h - \grad_{\tilde{\Omega}^h}^h(\Delta_{\tilde{\Omega}^h}^h)^\dagger\div_{\tilde{\Omega}^h}^h\vec\tau^h + \grad_{\tilde{\Omega}^h}^h(\Delta_{\tilde{\Omega}^h}^h)^\dagger\div_{\tilde{\Omega}^h}^h\grad_{\tilde{\Omega}^h}^h(\Delta_{\tilde{\Omega}^h}^h)^\dagger\div_{\tilde{\Omega}^h}^h\vec\tau^h \\
		&=\mcPh[K^h]\vec\tau^h - \grad_{\tilde{\Omega}^h}^h(\Delta_{\tilde{\Omega}^h}^h)^\dagger\div_{\tilde{\Omega}^h}^h\vec\tau^h + \grad_{\tilde{\Omega}^h}^h(\Delta_{\tilde{\Omega}^h}^h)^\dagger\Delta_{\tilde{\Omega}^h}^h(\Delta_{\tilde{\Omega}^h}^h)^\dagger\div_{\tilde{\Omega}^h}^h\vec\tau^h \\
		&=\mcPh[K^h]\vec\tau^h - \grad_{\tilde{\Omega}^h}^h(\Delta_{\tilde{\Omega}^h}^h)^\dagger\div_{\tilde{\Omega}^h}^h\vec\tau^h + \grad_{\tilde{\Omega}^h}^h(\Delta_{\tilde{\Omega}^h}^h)^\dagger\div_{\tilde{\Omega}^h}\vec\tau^h=\mcPh[K^h]\vec\tau^h.
		\end{align*}
		Using \cref{lmDiscLaplSym} and the fact that $(\div_{\tilde{\Omega}^h}^h)^*=-\grad_{\tilde{\Omega}^h}^h$, the second equality follows:
		\begin{align*}
		\langle \mcPh[K^h]\vec\tau^h_1,\vec\tau^h_2\rangle_{\mcX(\tilde{\Omega}^h,2)}
		&=\langle \vec\tau^h_1,\vec\tau^h_2\rangle_{\mcX(\tilde{\Omega}^h,2)}-\langle\grad_{\tilde{\Omega}^h}^h(\Delta_{\tilde{\Omega}^h}^h)^\dagger\div_{\tilde{\Omega}^h}^h\vec\tau^h_1,\vec\tau^h_2\rangle_{\mcX(\tilde{\Omega}^h,2)} \\
		&= \langle \vec\tau^h_1,\vec\tau^h_2\rangle_{\mcX(\tilde{\Omega}^h,2)}+\langle(\Delta_{\tilde{\Omega}^h}^h)^\dagger\div_{\tilde{\Omega}^h}^h\vec\tau^h_1,\div_{\tilde{\Omega}^h}^h\vec\tau^h_2\rangle_{\mcX(\tilde{\Omega}^h,1)}\\
		&= \langle \vec\tau^h_1,\vec\tau^h_2\rangle_{\mcX(\tilde{\Omega}^h,2)}+\langle\div_{\tilde{\Omega}^h}\vec\tau^h_1,(\Delta_{\tilde{\Omega}^h}^h)^\dagger\div_{\tilde{\Omega}^h}^h\vec\tau^h_2\rangle_{\mcX(\tilde{\Omega}^h,1)}\\
		&= \langle \vec\tau^h_1,\vec\tau^h_2\rangle_{\mcX(\tilde{\Omega}^h,2)}-\langle\vec\tau^h_1,\grad_{\tilde{\Omega}^h}^h(\Delta_{\tilde{\Omega}^h}^h)^\dagger\div_{\tilde{\Omega}^h}^h\vec\tau^h_2\rangle_{\mcX(\tilde{\Omega}^h,2)}
		=\langle \vec\tau^h_1,\mcPh[K^h]\vec\tau^h_2\rangle_{\mcX(\tilde{\Omega}^h,2)}
		\end{align*}
		for all $\vec\tau^h_1,\vec\tau^h_2\in\mcX(\tilde{\Omega}^h,2)$.
\end{proof}




\end{appendices}

\bibliography{ref_tvsd}

@book{Brezis:11,
	title={Functional analysis, Sobolev spaces and partial differential equations},
	author={Brezis, Haim},
	volume={2},
	number={3},
	year={2011},
	publisher={Springer},
	address={New York}
}

@book{Morrison:2011,
	title={Functional analysis: An introduction to Banach space theory},
	author={Morrison, Terry J},
	year={2011},
	publisher={John Wiley \& Sons},
	address={New York}
}

@book{rieder:2003,
	author = {Andreas Rieder},
	year = {Oktober 2003},
	title = {Keine Probleme mit Inversen Problemen},
	publisher = {Friedr. Vieweg \& Sohn Verlag/GWV Fachverlage GmbH,1. Auflage},
	address = {Wiesbaden}
}

@inproceedings{dualtvstokes:2009,
	title={A dual formulation of the {TV}-{S}tokes algorithm for image denoising},
	author={Elo, Christoffer A and Malyshev, Alexander and Rahman, Talal},
	booktitle={Scale Space and Variational Methods in Computer Vision: Second International Conference, SSVM 2009, Voss, Norway, June 1-5, 2009. Proceedings 2},
	pages={307--318},
	year={2009},
	organization={Springer}
}

@Article{Chambolle:2004,
	author	= {Chambolle, Antonin},
	title		= {An algorithm for total variation minimization and
	applications},
	journal	= {J. Math. Imaging Vision},
	year		= {2004},
	volume	= {20},
	number	= {1-2},
	pages		= {89--97},
	issn		= {0924-9907},
	note		= {Special issue on mathematics and image analysis},
	coden		= {JMIVEK},
	fjournal	= {Journal of Mathematical Imaging and Vision},
	mrclass	= {49M30 (65R32 68T45 68U10 90C30)},
	mrnumber	= {2049783 (2005m:49058)}
}

@book{ekeland:1999,
	author = {I. Ekeland and R. Témam},
	title = {Convex Analysis and Variational Problems},
	volume = {28},
	series = {Classics in Applied Mathematics},
	publisher = {Society for Industrial and Applied Mathematics (SIAM),
	Philadelphia, PA, English edition},
	year = {1999},
	note = {Translated from the French.},
	address = {Amsterdam}
}

@book{rockefellar:1970,
	author = {R. T. Rockafellar},
	title = {Convex Analysis},
	series = {Princeton Mathematical Series},
	number = {28},
	publisher = {Princeton University Press},
	year = {1970},
	address = {Princeton, New Jersey}
}

@book{ambrosio:2000,
	author = {L. Ambrosio and N. Fusco and D. Pallara},
	title = {Functions of Bounded Variation and Free Discontinuity
	Problems},
	publisher = {Oxford Mathematical Monographs. The Clarendon Press/Oxford University Press},
	year = {2000},
	address ={New York}
}

@article{HilbLanger2022,
	title={A general decomposition method for a convex problem related to total variation minimization},
	author={Hilb, Stephan and Langer, Andreas},
	journal={arXiv preprint arXiv:2211.00101},
	year={2022}
}

@Article{	  HintermullerKunisch:04,
	author	= {Hinterm{\"u}ller, Michael and Kunisch, Karl},
	title		= {Total Bounded Variation Regularization as a Bilaterally
	Constrained Optimization Problem},
	journal	= {SIAM Journal on Applied Mathematics},
	year		= {2004},
	volume	= {64},
	number	= {4},
	pages		= {1311--1333},
	publisher	= {SIAM},
	url		= {https://doi.org/10.1137/S0036139903422784}
}

@book{Kreyszig:1991,
	title={Introductory Functional Analysis with Applications},
	author={Kreyszig, Erwin},
	volume={17},
	year={1991},
	publisher={John Wiley \& Sons},
	address ={New York}
}

@Article{	  AcarVogel:94,
	author	= {Acar, R. and Vogel, C. R.},
	title		= {Analysis of bounded variation penalty methods for
	ill-posed problems},
	journal	= {Inverse Problems},
	year		= {1994},
	volume	= {10},
	pages		= {1217--1229},
	number	= {6},
	coden		= {INPEEY},
	fjournal	= {Inverse Problems. An International Journal on the Theory
	and Practice of Inverse Problems, Inverse Methods and
	Computerized Inversion of Data},
	issn		= {0266-5611},
	mrclass	= {65J20 (47A50 49J45)},
	mrnumber	= {1306801 (95i:65092)},
	url		= {http://stacks.iop.org/0266-5611/10/1217}
}

@article {LiRaTa2011,
	AUTHOR = {Litvinov, William G. and Rahman, Talal and Tai, Xue-Cheng},
	TITLE = {A modified {TV}-{S}tokes model for image processing},
	JOURNAL = {SIAM J. Sci. Comput.},
	FJOURNAL = {SIAM Journal on Scientific Computing},
	VOLUME = {33},
	YEAR = {2011},
	NUMBER = {4},
	PAGES = {1574--1597},
	ISSN = {1064-8275,1095-7197},
	MRCLASS = {65D18 (65K10 68U10 94A08)},
	MRNUMBER = {2821259},
	MRREVIEWER = {Pierluigi\ Maponi},
	DOI = {10.1137/080727506},
	URL = {https://doi.org/10.1137/080727506},
}

@inproceedings{RaTaOs2007,
	title={A {TV}-{S}tokes denoising algorithm},
	author={Rahman, Talal and Tai, Xue-Cheng and Osher, Stanley},
	booktitle={Scale Space and Variational Methods in Computer Vision: First International Conference, SSVM 2007, Ischia, Italy, May 30-June 2, 2007. Proceedings 1},
	pages={473--483},
	year={2007},
	organization={Springer}
}

@article{HiLaAl2023,
	title={A primal-dual finite element method for scalar and vectorial total variation minimization},
	author={Hilb, Stephan and Langer, Andreas and Alk{\"a}mper, Martin},
	journal={Journal of Scientific Computing},
	volume={96},
	number={1},
	pages={24},
	year={2023},
	publisher={Springer}
}

@article{	  HintermullerRautenberg:15,
	title		= {On the density of classes of closed convex sets with
	pointwise constraints in Sobolev spaces},
	author	= {Hinterm{\"u}ller, M and Rautenberg, CN},
	journal	= {Journal of Mathematical Analysis and Applications},
	volume	= {426},
	number	= {1},
	pages		= {585--593},
	year		= {2015},
	publisher	= {Elsevier}
}

@Article{ChambolleLions1997,
	author	= {Chambolle, Antonin and Lions, Pierre-Louis},
	title		= {Image recovery via total variation minimization and
	related problems},
	journal	= {Numer. Math.},
	year		= {1997},
	volume	= {76},
	pages		= {167--188},
	number	= {2},
	coden		= {NUMMA7},
	doi		= {10.1007/s002110050258},
	fjournal	= {Numerische Mathematik},
	issn		= {0029-599X},
	mrclass	= {65K10 (49J99 68U10)},
	mrnumber	= {1440119 (98c:65099)},
	mrreviewer	= {Tom{\'a}{\v{s}} Roub{\'{\i}}{\v{c}}ek},
	url		= {http://dx.doi.org/10.1007/s002110050258}
}

@article{	  GoStCr:12,
	title		= {The Natural Vectorial Total Variation
	Which Arises from Geometric Measure Theory},
	author	= {Goldluecke, Bastian and Strekalovskiy, Evegeny and Cremers, Daniel},
	doi = {10.1137/110823766},
	pages		= {537-563},
	year		= {2012},
	journal = {{SIAM} Journal on Imaging Sciences},
	volume = {5},
	number = {2}
}

@Article{Langer2017,
	author	= {Langer, Andreas},
	title		= {Automated Parameter Selection for Total Variation
	Minimization in Image Restoration},
	journal	= {Journal of Mathematical Imaging and Vision},
	year		= {2017},
	volume	= {57},
	number	= {2},
	pages		= {239--268},
	month		= {Feb},
	issn		= {1573-7683},
	day		= {01},
	doi		= {10.1007/s10851-016-0676-2},
	url		= {https://doi.org/10.1007/s10851-016-0676-2}
}

@article{Langer2017_2,
	title={Automated parameter selection in the {$L^1$-$L^2$-TV} model for removing {G}aussian plus impulse noise},
	author={Langer, Andreas},
	journal={Inverse Problems},
	volume={33},
	number={7},
	pages={074002},
	year={2017},
	publisher={IOP Publishing}
}

@Book{		  AtBuMi:14,
	title		= {Variational Analysis in {S}obolev and {BV} Spaces},
	publisher	= {Society for Industrial and Applied Mathematics (SIAM),
	Philadelphia, PA; Mathematical Optimization Society},
	year		= {2014},
	author	= {Attouch, Hedy and Buttazzo, Giuseppe and Michaille,
	G{\'e}rard},
	pages		= {xii+793},
	series	= {MOS-SIAM Series on Optimization},
	edition	= {Second},
	note		= {Applications to PDEs and optimization},
	doi		= {10.1137/1.9781611973488},
	isbn		= {978-1-611973-47-1},
	mrclass	= {49-02 (46E35 49J45 49J53 49K20 74G65)},
	mrnumber	= {3288271},
	mrreviewer	= {Luca Granieri},
	url		= {http://dx.doi.org/10.1137/1.9781611973488},
	address = {Philadelphia, PA}
}

@book{GirRav2012,
	title={Finite element methods for Navier-Stokes equations: theory and algorithms},
	author={Girault, Vivette and Raviart, Pierre-Arnaud},
	volume={5},
	year={2012},
	publisher={Springer Science \& Business Media},
	address = {Berlin, Heidelberg}
}

@article{ROF,
	title={Nonlinear total variation based noise removal algorithms},
	author={Rudin, Leonid I and Osher, Stanley and Fatemi, Emad},
	journal={Physica D: Nonlinear Phenomena},
	volume={60},
	number={1},
	pages={259--268},
	year={1992},
	publisher={Elsevier}
}

@article{LysOshTai2004,
	author    = {Marius Lysaker and Stanley Osher and Xue-Cheng Tai},
	title     = {Noise Removal Using Smoothed Normals and Surface Fitting},
	journal   = {IEEE Transactions on Image Processing},
	volume    = {13},
	number    = {10},
	pages     = {1345--1357},
	year      = {2004},
	doi       = {10.1109/TIP.2004.834662}
}

@article{Jalalzai2016,
	title={Some remarks on the staircasing phenomenon in total variation-based image denoising},
	author={Jalalzai, Khalid},
	journal	= {J. Math. Imaging Vision},
	fjournal	= {Journal of Mathematical Imaging and Vision},
	volume={54},
	pages={256--268},
	year={2016},
	publisher={Springer}
}

@article {BreKunPoc,
	AUTHOR = {Bredies, Kristian and Kunisch, Karl and Pock, Thomas},
	TITLE = {Total generalized variation},
	JOURNAL = {SIAM J. Imaging Sci.},
	FJOURNAL = {SIAM Journal on Imaging Sciences},
	VOLUME = {3},
	YEAR = {2010},
	NUMBER = {3},
	PAGES = {492--526},
	ISSN = {1936-4954},
	CODEN = {SJISBI},
	MRCLASS = {49J52 (49Q20 68U10)},
	MRNUMBER = {2736018 (2011k:49029)},
	MRREVIEWER = {Guy Jumarie},
	DOI = {10.1137/090769521},
	URL = {http://dx.doi.org/10.1137/090769521},
}

@Article{	  PapafitsorosSchonlieb:14,
	author	= {Papafitsoros, K. and Sch\"{o}nlieb, C.-B.},
	title		= {A combined first and second order variational approach for
	image reconstruction},
	journal	= {J. Math. Imaging Vision},
	fjournal	= {Journal of Mathematical Imaging and Vision},
	volume	= {48},
	year		= {2014},
	number	= {2},
	pages		= {308--338},
	issn		= {0924-9907},
	mrclass	= {94A08 (49N90)},
	mrnumber	= {3152107},
	doi		= {10.1007/s10851-013-0445-4},
	url		= {https://doi.org/10.1007/s10851-013-0445-4}
}

@article{ForKimLanSch,
	title={Wavelet Decomposition Method for {$L_2$/TV}-Image Deblurring},
	author={Fornasier, Massimo and Kim, Yunho and Langer, Andreas and Sch\"onlieb, C-B},
	JOURNAL = {SIAM J. Imaging Sci.},
	FJOURNAL = {SIAM Journal on Imaging Sciences},
	volume={5},
	number={3},
	pages={857--885},
	year={2012},
	publisher={SIAM}
}

@article{ForLanSch2010,
	title={A convergent overlapping domain decomposition method for total variation minimization},
	author={Fornasier, Massimo and Langer, Andreas and Sch{\"o}nlieb, Carola-Bibiane},
	JOURNAL = {Numer. Math.},
	FJOURNAL = {Numerische Mathematik},
	volume={116},
	number={4},
	pages={645--685},
	year={2010},
	publisher={Springer}
}

@article {ForSch,
	AUTHOR = {Fornasier, Massimo and Sch{\"o}nlieb, Carola-Bibiane},
	TITLE = {Subspace correction methods for total variation and {$l_1$}-minimization},
	JOURNAL = {SIAM J. Numer. Anal.},
	FJOURNAL = {SIAM Journal on Numerical Analysis},
	VOLUME = {47},
	YEAR = {2009},
	NUMBER = {5},
	PAGES = {3397--3428},
	ISSN = {0036-1429},
	MRCLASS = {65K10 (49M30 52A41 90C25)},
	MRNUMBER = {2551200 (2011d:65150)},
	MRREVIEWER = {Jonas Koko},
	DOI = {10.1137/070710779},
	URL = {http://dx.doi.org/10.1137/070710779}
}

@article{LanOshSch,
	title={Bregmanized domain decomposition for image restoration},
	author={Langer, Andreas and Osher, Stanley and Sch{\"o}nlieb, Carola-Bibiane},
	JOURNAL = {J. Sci. Comput.},
	FJOURNAL = {Journal of Scientific Computing},
	volume={54},
	number={2-3},
	pages={549--576},
	year={2013},
	publisher={Springer}
}

@incollection{Lan2021,
	title={Domain Decomposition for Non-smooth (in Particular {TV}) Minimization},
	author={Langer, Andreas},
	booktitle={Handbook of Mathematical Models and Algorithms in Computer Vision and Imaging: Mathematical Imaging and Vision},
	pages={1--47},
	year={2021},
	publisher={Springer},
	address={Cham},
	doi={10.1007/978-3-030-03009-4_104-1}
}

@article{LeeNam,
	title={Primal Domain Decomposition Methods for the Total Variation Minimization, Based on Dual Decomposition},
	author={Lee, Chang-Ock and Nam, Changmin},
	JOURNAL = {SIAM J. Sci. Comput.},
	FJOURNAL = {SIAM Journal on Scientific Computing},
	volume={39},
	number={2},
	pages={B403--B423},
	year={2017},
	publisher={SIAM}
}

@article {HinLan2013,
	AUTHOR = {Hinterm{\"u}ller, Michael and Langer, Andreas},
	TITLE = {Subspace correction methods for a class of nonsmooth and
	nonadditive convex variational problems with mixed {$L^1/L^2$} data-fidelity in image processing},
	JOURNAL = {SIAM J. Imaging Sci.},
	FJOURNAL = {SIAM Journal on Imaging Sciences},
	VOLUME = {6},
	YEAR = {2013},
	NUMBER = {4},
	PAGES = {2134--2173},
	ISSN = {1936-4954},
	MRCLASS = {94A08 (49M27 65K10 68U10 90C25 90C90)},
	MRNUMBER = {3121758},
	MRREVIEWER = {V. S. Sizikov},
	DOI = {10.1137/120894130},
	URL = {http://dx.doi.org/10.1137/120894130},
}

@incollection{HinLan2014,
	title={Surrogate Functional Based Subspace Correction Methods for Image Processing},
	author={Hinterm{\"u}ller, Michael and Langer, Andreas},
	booktitle={Domain Decomposition Methods in Science and Engineering XXI},
	pages={829--837},
	year={2014},
	publisher={Springer},
	address={Cham},
	doi={10.1007/978-3-319-05789-7_80}
}

@article{HinLan2015_1,
	title={Non-overlapping domain decomposition methods for dual total variation based image denoising},
	author={Hinterm{\"u}ller, Michael and Langer, Andreas},
	JOURNAL = {J. Sci. Comput.},
	FJOURNAL = {Journal of Scientific Computing},
	volume={62},
	number={2},
	pages={456--481},
	year={2015},
	publisher={Springer}
}

@article{ChaTaiWanYan,
	title={Convergence Rate of Overlapping Domain Decomposition Methods for the {R}udin--{O}sher--{F}atemi Model Based on a Dual Formulation},
	author={Chang, Huibin and Tai, Xue-Cheng and Wang, Li-Lian and Yang, Danping},
	JOURNAL = {SIAM J. Imaging Sci.},
	FJOURNAL = {SIAM Journal on Imaging Sciences},
	volume={8},
	number={1},
	pages={564--591},
	year={2015},
	publisher={SIAM}
}

@Article{LangerGaspoz:19,
	title		= {Overlapping Domain Decomposition Methods for Total
	Variation Denoising},
	author	= {Langer, Andreas and Gaspoz, Fernando},
	JOURNAL = {SIAM J. Numer. Anal.},
	FJOURNAL = {SIAM Journal on Numerical Analysis},
	volume	= {57},
	number	= {3},
	pages		= {1411--1444},
	year		= {2019},
	publisher	= {SIAM}
}

@article{LeeNamPark2019,
	title={Domain Decomposition Methods Using Dual Conversion for the Total Variation Minimization with {$L^{1}$} Fidelity Term},
	author={Lee, Chang-Ock and Nam, Changmin and Park, Jongho},
	JOURNAL = {J. Sci. Comput.},
	FJOURNAL = {Journal of Scientific Computing},
	volume={78},
	number={2},
	pages={951--970},
	year={2019},
	publisher={Springer}
}

@article{LeePark:20,
	title={Recent advances in domain decomposition methods for total variation minimization},
	author={Lee, Chang-Ock and Park, Jongho},
	JOURNAL = {J. Korean Soc. Ind. Appl. Math.},
	FJOURNAL = {Journal of the Korean Society for Industrial and Applied
	Mathematics},
	volume={24},
	number={2},
	pages={161--197},
	year={2020},
	publisher={Korean Society for Industrial and Applied Mathematics}
}

@article{LeePark2019,
	title={A finite element nonoverlapping domain decomposition method with {L}agrange multipliers for the dual total variation minimizations},
	author={Lee, Chang-Ock and Park, Jongho},
	JOURNAL = {J. Sci. Comput.},
	FJOURNAL = {Journal of Scientific Computing},
	volume={81},
	number={3},
	pages={2331--2355},
	year={2019},
	publisher={Springer}
}

@article{Lee2019fast,
	title={Fast Nonoverlapping Block {Jacobi} Method for the Dual {Rudin--Osher--Fatemi} Model},
	author={Lee, Chang-Ock and Park, Jongho},
	JOURNAL = {SIAM J. Imaging Sci.},
	FJOURNAL = {SIAM Journal on Imaging Sciences},
	volume={12},
	number={4},
	pages={2009--2034},
	year={2019},
	publisher={SIAM}
}

@article{LeeParkPark2019,
	title={A finite element approach for the dual {R}udin--{O}sher--{F}atemi model and its nonoverlapping domain decomposition methods},
	author={Lee, Chang-Ock and Park, Eun-Hee and Park, Jongho},
	JOURNAL = {SIAM J. Sci. Comput.},
	FJOURNAL = {SIAM Journal on Scientific Computing},
	volume={41},
	number={2},
	pages={B205--B228},
	year={2019},
	publisher={SIAM}
}

@article{LiZhangChangDuan2021,
	AUTHOR = {Li, Xue and Zhang, Zhenwei and Chang, Huibin and Duan, Yuping},
	TITLE = {Accelerated non-overlapping domain decomposition method for
	total variation minimization},
	JOURNAL = {Numer. Math. Theory Methods Appl.},
	FJOURNAL = {Numerical Mathematics. Theory, Methods and Applications},
	VOLUME = {14},
	YEAR = {2021},
	NUMBER = {4},
	PAGES = {1017--1041},
	ISSN = {1004-8979,2079-7338},
	MRCLASS = {65K05 (49J27 65K10 65N55 68U10 94A08)},
	MRNUMBER = {4313167},
	MRREVIEWER = {Yan\ Tang},
	DOI = {10.4208/nmtma.oa-2020-0146},
	URL = {https://doi.org/10.4208/nmtma.oa-2020-0146}
}

@article{park2020overlapping,
	title={An overlapping domain decomposition framework without dual formulation for variational imaging problems},
	author={Park, Jongho},
	JOURNAL = {Adv. Comput. Math.},
	FJOURNAL = {Advances in Computational Mathematics},
	volume={46},
	number={4},
	pages={1--29},
	year={2020},
	publisher={Springer}
}

@article{park2020additive,
	title={Additive {S}chwarz methods for convex optimization as gradient methods},
	author={Park, Jongho},
	JOURNAL = {SIAM J. Numer. Anal.},
	FJOURNAL = {SIAM Journal on Numerical Analysis},
	volume={58},
	number={3},
	pages={1495--1530},
	year={2020},
	publisher={SIAM}
}

@article{park2021accelerated,
	title={Accelerated additive {S}chwarz methods for convex optimization with adaptive restart},
	author={Park, Jongho},
	JOURNAL = {J. Sci. Comput.},
	FJOURNAL = {Journal of Scientific Computing},
	volume={89},
	number={3},
	pages={1--20},
	year={2021},
	publisher={Springer}
}

@article{Car,
	author =        {Carstensen, Carsten},
	journal =       {Numerical linear algebra with applications},
	number =        {3},
	pages =         {177--190},
	title =         {Domain decomposition for a non-smooth convex
	minimization problem and its application to
	plasticity},
	volume =        {4},
	year =          {1997},
}

@article{ChaMat,
	author =        {Chan, Tony F and Mathew, Tarek P},
	journal =       {Acta numerica},
	pages =         {61--143},
	publisher =     {Cambridge University Press},
	title =         {Domain decomposition algorithms},
	volume =        {3},
	year =          {1994},
}

@article{TaiTse,
	author =        {Tai, Xue-Cheng and Tseng, Paul},
	journal =       {Mathematics of Computation},
	number =        {239},
	pages =         {1105--1135},
	title =         {Convergence rate analysis of an asynchronous space
	decomposition method for convex minimization},
	volume =        {71},
	year =          {2002},
}

@article{TaiXu,
	author =        {Tai, Xue-Cheng and Xu, Jinchao},
	journal =       {Mathematics of Computation},
	number =        {237},
	pages =         {105--124},
	title =         {Global and uniform convergence of subspace correction
	methods for some convex optimization problems},
	volume =        {71},
	year =          {2002},
}

@article{CheTai,
	author =        {Chen, Ke and Tai, Xue-Cheng},
	journal =       {Journal of Scientific Computing},
	number =        {2},
	pages =         {115--138},
	publisher =     {Springer},
	title =         {A nonlinear multigrid method for total variation
	minimization from image restoration},
	volume =        {33},
	year =          {2007},
}

@article{XuTaiWan,
	author =        {Xu, Jing and Tai, Xue-Cheng and Wang, Li-Lian},
	journal =       {Inverse Problems \& Imaging},
	number =        {3},
	pages =         {523--545},
	title =         {A two-level domain decomposition method for image
	restoration},
	volume =        {4},
	year =          {2010},
}

@article{DuanTai2012,
	title={Domain decomposition methods with graph cuts algorithms for total variation minimization},
	author={Duan, Yuping and Tai, Xue-Cheng},
	journal={Advances in Computational Mathematics},
	volume={36},
	pages={175--199},
	year={2012},
	publisher={Springer}
}

@book{hunter2001applied,
  title={Applied analysis},
  author={Hunter, John K and Nachtergaele, Bruno},
  year={2001},
  publisher={World Scientific},
  address={Singapore}
}

@book{TosWid,
  title={Domain Decomposition Methods: Algorithms and Theory},
  author={Toselli, Andrea and Widlund, Olof B},
  volume={34},
  year={2005},
  publisher={Springer},
  address={Berlin, Heidelberg},
  doi={10.1007/b137868}
}

@book{Sauer:12,
  title={Numerical Analysis},
  author={Sauer, Timothy},
  year={2012},
  publisher={Pearson},
  address={Boston}
}

\end{document}